\documentclass[a4paper, 12pt]{article}
% Resize the page, to save the forests.
\setlength{\topmargin}{-1cm}
\setlength{\textheight}{24cm}
\setlength{\textwidth}{15cm}
\setlength{\oddsidemargin}{0.5cm}
\setlength{\evensidemargin}{0.5cm}

\usepackage{latexsym}
\usepackage{amssymb}
\usepackage{amsthm}
\usepackage{amsmath}

\usepackage{graphicx}

\newtheorem{theorem}{Theorem}[section]
\newtheorem{lemma}[theorem]{Lemma}

\theoremstyle{theorem}

\newtheorem{proposition}[theorem]{Proposition}
\newtheorem{corollary}[theorem]{Corollary}

\newtheorem{algorithm}[theorem]{Algorithm}

\theoremstyle{remark}
\newtheorem{remark}[theorem]{Remark}

\numberwithin{equation}{section}

%\newtheorem{lemma}{Lemma}
%\newtheorem{algorithm}{Algorithm}
%\newtheorem{theorem}{Theorem}
%\newtheorem{definition}{Definition}
%\newtheorem{corollary}{Corollary}
%\newtheorem{remark}{Remark}
%\newtheorem{proposition}{Proposition}

%\numberwithin{equation}{section}

\makeatletter
\def\Ddots{\mathinner{\mkern1mu\raise\p@
\vbox{\kern7\p@\hbox{.}}\mkern2mu
\raise4\p@\hbox{.}\mkern2mu\raise7\p@\hbox{.}\mkern1mu}}
\makeatother

\newcommand{\etalchar}[1]{$^{#1}$}

\def\N{\mathbb{N}}
\def\F{\mathbb{F}}
\def\C{\mathbb{C}}
\def\Z{\mathbb{Z}}

 % notation for the projection operator
 % the notation for the long word

 % notation for the sign function, but I'll want to check Step's template.
\def\Hom{\,\mbox{Hom}\,}
\def\Ind{\,\mbox{Ind}\,}

\newcommand{\conj}[1]{\overline{#1}}

\newcommand{\map}[2]{\,{:}\,#1\!\longrightarrow\!#2}

 % the Whittaker function
 % the Hecke algebra
\def\G{\widetilde{G}} % the metaplectic group
\def\K{\widetilde{K}} % the metaplectic group
\def\B{\widetilde{B}} % the metaplectic group
\def\T{\widetilde{T}} % the metaplectic group
\def\H{\widetilde{H}} % the metaplectic group

\def\I{\mathcal{I}} % set of long word decompositions

\def\a{\alpha}
\def\l{\lambda}
\def\la{\lambda}
\def\ii{{\bf i}}
\def\m{{\bf m}}
\def\La{\Lambda}

\def\p{\varpi} % the uniformiser

\def\gr{\mathcal{G}} % the affine grassmannian

\title{Metaplectic Whittaker Functions and Crystal Bases}
\author{Peter J. McNamara\\
\small Department of Mathematics\\[-0.8ex]
\small Massachusetts Institute of Technology, MA 02139, USA\\[-0.8ex]
\small \texttt{petermc@math.mit.edu}}

\date{March 5 2010}

\begin{document}

\maketitle

\abstract{We study Whittaker functions on nonlinear coverings of simple algebraic groups over a non-archimedean local field. We produce a recipe for expressing such a Whittaker function as a weighted sum over a crystal graph, and show that in type A, these expressions agree with known formulae for the prime power supported coefficients of Multiple Dirichlet Series.}

\section{Introduction}

Let $G$ be a simply connected Chevalley group over a non-archimedean local field, and let $U^-$ be opposite group to the unipotent radical of a Borel subgroup $B$. There are important representation-theoretic quantities that can be expressed as an integral over $U^-$, for example intertwining operators and Whittaker functions. This paper focuses on the study and evaluation of such integrals.

Specifically, for $\chi$ a character of $B$ trivial on the maximal compact subgroup $K$ of $G$, define the function $\phi_K\map{G}{\C}$ by
$$
\phi_K(bk)=(\delta^{1/2}\chi)(b)
$$ for $b\in B$ and $k\in K$, where $\delta^{1/2}$ indicates the modular quasicharacter of $B$.

Then $\phi_K$ is a $K$-invariant vector in the principal series representation $V_\chi=\Ind_B^G \delta^{1/2}\chi$. A formula for the (unique up to scaling) intertwining operator $T$ between $V_\chi$ and $V_{\chi^{w_0}}$ is given by
$$
Tf(g)=\int_{U^-} f(uw_0g)du.
$$
Here $w_0$ is the long element of the Weyl group, which acts on the set of principal series representations. It is most important to take $f=\phi_K$ and $g=1$ in the above integral.

If we let $\psi$ be a character of $U^-$, then the Whittaker function for the representation $V_\chi$ can be calculated by the integral
$$
W(g)=\int_{U^-} \phi_K(ug)\psi(u)du.
$$

The above integrals have direct analogues when one wishes to study the representation theory of central extensions $\G$ of $G$ by a finite cyclic group $\mu_n$. We shall refer to such groups as metaplectic groups, as opposed to the more restrictive notion of a metaplectic group as referring specifically to the double cover of the symplectic group. We shall review what we require from the representation theory of such groups in Section \ref{reptheory}. Despite the restriction to the reductive case in the beginning of the introduction, our work will always hold more generally for the case of a metaplectic group, and we shall work in this generality throughout.

To each reduced decomposition $w_0=s_{i_1}\ldots s_{i_N}$ of the long element of the Weyl group into a product of simple reflections, we produce a decomposition of $U^-$ into cells $C_\m$ indexed by $N$-tuples $\m=(m_1\,\ldots,m_N)$ of natural numbers, by producing an explicit version of the Iwasawa decomposition. The concept of realising a crystal combinatorially as a set of subvarieties of a unipotent radical dates back to Lusztig \cite{lusztig96}. The decomposition we consider turns out to be equivalent to that of \cite[Proposition 4.1]{kamnitzer}. However the approach taken in this paper is independent of that of Kamnitzer. The cells $C_\m$ have the property that the function $\phi_K$ is constant on each cell, so writing
$$
\int_{U^-}=\sum_{\m}\int_{C_\m}
$$
yields a combinatorial sum for these integrals we are studying, which becomes amenable to explicit calculation. In this manner, we obtain a method allowing us to evaluate our target family of integrals as a combinatorial sum over a crystal.

Indeed, there is a natural bijection between our collection of cells and the elements of the canonical basis $B(-\infty)$ of $U_q(\mathfrak{n}^+)$, the positive part of the quantised universal enveloping algebra. This connection with the combinatorics of crystals is studied in Section \ref{xtal} in the context of a positive characteristic local field. In particular, Theorem \ref{maincrystalthm} provides an explicit identification between the parametrisation of our cells and Lusztig's parametrisation of the canonical basis \cite{lusztigbook}. At the same time, we are able to relate our cell decomposition with previously constructed geometric models of crystals, in particular the realisation of the crystal in terms of Mirkovic-Vilonen cycles in the affine Grassmannian as in \cite{bravermangaitsgory}.

In Section \ref{initial}, we are able to evaluate the integral for the intertwining operator in full generality, proving a metaplectic version of the Gindikin-Karpelevich Formula; this is the content of Theorem \ref{gk}, itself a generalisation of \cite[Proposition I.2.4]{kp}. In type $A$, this presentation of the Gindikin-Karpelevich formula as a sum over a crystal has been independently obtained by Bump and Nakasuji \cite{bumpnakasuji} via a different method.

In the case of Whittaker functions, to achieve explicit results, we restrict ourselves to working in type A with a particular choice of long word decomposition. We are then able to compute in Theorem \ref{main} the metaplectic Whittaker function as a weighted sum over a crystal. We note that this weighted sum agrees exactly with the prime-power supported coefficients of a Weyl group Multiple Dirichlet Series, which we will refer to as the `$p$-part.' These multiple Dirichlet series are certain Dirichlet series in several complex variables, satisfying a set of functional equations indexed by a Weyl group.

Weyl group multiple Dirichlet series were first introduced in \cite{fiveauthor}. These are global objects built from local components, namely their $p$-parts for each prime $p$. A combinatorial  description of the $p$-part of such a series as a weighted sum over Gelfand-Tsetlin patterns is given in \cite{wgmdsxtal}. We give a full description of these coefficients at the beginning of Section \ref{tok}. In fact this particular description is given an alternative description in terms of paths in a crystal graph in \cite{wgmdsxtal}, which gives a natural interpretation of the integers $e_{i,j}$ that appear in the formulae.

This particular description was obtained in \cite{wgmdsxtal} by considering Whittaker coefficients of global metaplectic Eisenstein series attached to a minimal parabolic subgroup. The familiar unfolding technique may be applied in this metaplectic setting to show that the local components of Fourier coefficients of such an adelic metaplectic Eisenstein series are indeed given by a local Whittaker function. There is some subtlety with respect to the fact that multiple Dirichlet series do not admit an Euler product, but instead have coefficients which satisfy a weaker property of twisted multiplicativity.

Viewed in this regard, our results can be considered as an alternative approach to the combinatorial formulae appearing in \cite{wgmdsxtal}. In fact, in section \ref{tok}, we detail our calculation in type A for a standard type of long word, exactly producing the combinatorial formulae of \cite{wgmdsxtal} in Theorem \ref{main}.

Because of the generality of our approach, which in principle can be used for arbitrary root systems and long word decompositions, our method may be viewed as providing a recipe for writing these Whittaker functions in the form of a generating function indexed by a crystal, as well as providing some understanding as to why the combinatorics of crystal graphs make an appearance in this field.

Independently, there has been an alternative calculation of the metaplectic Whittaker function in type A by Chinta and Offen \cite{chintaoffen}. This result shows that the spherical Whittaker function agrees with the $p$-parts of Weyl group multiple Dirichlet series using a different definition of the $p$-part, due to Chinta and Gunnells \cite{chintagunnells}. The results of this paper, combined with the work of Chinta and Offen provide a resolution of the question of proving that these two differing definitions of multiple Dirichlet series agree, which hitherto had been an open problem.

The author would like to thank B. Brubaker for his guidance through this area of study, and his assistance in the preparation of this manuscript.

% in the reductive case, combined with \cite{shintani}, this can be viewed as giving an alternative proof of the theorem of \cite{tokuyama}.

\section{Preliminaries}\label{prelim}

In this section, we set up notation for use in the sequel.

Let $\Phi$ be a reduced root system, $\Phi^+$ and $\Phi^-$ a choice of positive and negative roots respectively, and $I$ the finite index set of simple roots. We denote by $\Phi^\vee$ the coroots and $\a\mapsto \a^\vee$ the bijection between $\Phi$ and $\Phi^\vee$. We use  $\langle\cdot,\cdot\rangle\map{\Phi\times\Phi^\vee}{\Z}$ to denote the canonical pairing between $\Phi$ and $\Phi^\vee$.

Let $W$ be the Weyl group of the root system $\Phi$. It is generated by simple reflections $s_i$ for each $i\in I$. Let $w_0$ denote the element of longest length in $W$ and denote that length by $N$. The notation $[N]$ will be used for the set $\{1,2,\ldots,N\}$.

Let $F$ be a field equipped with a non-trivial discrete valuation $v$ and compatible norm $|\cdot |$. We shall be mostly concerned with the case of a local field, which implies that the cardinality of the residue field is finite. We use $q$ to denote this cardinality and normalise $|\cdot|$ such that $|x|=q^{-v(x)}$. Let $O_F$ be the valuation ring of $F$ and $\p$ be a uniformiser, that is a generator of the maximal ideal of $O_F$. In order to establish the connection with crystal graphs in Section \ref{xtal}, we restrict to the case where $F$ is the field of Laurent series over an algebraically closed field, although the rest of the paper is concerned with the case where $F$ is a local field.

Let $n$ be a positive integer not divisible by the residue characteristic and such that $F$ contains $2n$ $2n$-th roots of unity. Let $(\cdot,\cdot)\map{F^\times \times F^\times}{\mu_n}$ be the $n$-th power Hilbert symbol. The properties of the Hilbert symbol that we shall explicitly use are its bilinearity and skew symmetry, so for example $(xy,z)=(x,z)(y,z)$ and $(x,y)(y,x)=1$. The deeper properties of the Hilbert symbol will only be used implicitly, in that they are necessary in order to construct the central extension $\G$ of $G$.

Associated to the root system $\Phi$, there exists a split, simple, connected, simply connected algebraic group over $F$, which we denote by $G$. Let $T$ be a maximal torus of $G$, $X$ be the character group of $T$, and $Y$ the cocharacter group. Then we have $Y=\Z\Phi^\vee$, while $\Z\Phi$ is a finite index sublattice of $X$. We extend $\langle\cdot,\cdot\rangle$ to a perfect pairing $\langle\cdot,\cdot\rangle\map{X\times Y}{\Z}$ by linearity. We also will require the corresponding adjoint group $G^\text{ad}$ and the cocharacter group $\La$ of the corresponding torus $T^\text{ad}$. This cocharacter group $\La$ naturally contains the coroot lattice $Y$ as a finite index sublattice.

Choose a Borel subgroup $B$ of $G$ containing $T$ and let $U$ denote its unipotent radical. We focus most of our attention on the group $U^-$, the opposite unipotent subgroup to $U$.

Let $\psi$ denote an additive character of $F$ with conductor $O_F$. Thus $\psi$ is a homomorphism from the additive group $F$ to $\C^\times$ that is trivial on $O_F$ and non-trivial on $\frac{1}{\p}O_F$.

We will encounter Gauss sums appearing in our work, so we shall pause to define them and recap their properties. We define the Gauss sum \begin{equation}\label{gausssumdefn}
g(a,b)=\int_{O_F^\times}(u,\p)^a\psi(\p^bu)du.
\end{equation}
where we consider $du$ to mean additive measure on $F$, normalised such that $O_F^\times$ has volume $q-1$. This choice of normalisation of the Haar measure ensures that this approach to defining the Gauss sum agrees precisely with the classical definition as a sum over a finite field. We choose this approach since these precise integrals will appear later in this work. The following proposition provides a list of standard properties of these Gauss sums whose proofs involve routine manipulation of the defining integral.

\begin{proposition} \label{gauss}  Gauss sums satisfy the following identities.
\begin{enumerate}
\item For all $a$ and $b$, $\conj{g(a,b)}=(-1,\p)^ag(-a,b)$.
\item If $b<-1$, then $g(a,b)=0$.
\item If $b\geq 0$, then $g(a,b)=q-1$ if $a$ is divisible by $n$, and zero otherwise.
\item If $n$ divides $a$, then $g(a,-1)=-1$.
\item If $a$ is not divisible by $n$, then $|g(a,-1)|=q^{1/2}$.
\end{enumerate}
\end{proposition}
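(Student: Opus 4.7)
The plan is to verify each item by direct manipulation of the defining integral \eqref{gausssumdefn}, using bilinearity and skew-symmetry of the Hilbert symbol together with two underlying facts: that $1+\varpi O_F\subseteq(O_F^\times)^n$ (by Hensel's lemma, since $n$ is coprime to the residue characteristic), and that with the paper's normalization of additive Haar measure one has $\operatorname{vol}(O_F)=q$ and $\operatorname{vol}(1+\varpi O_F)=\operatorname{vol}(\varpi O_F)=1$.

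Parts (1)--(4) are straightforward. For (1), the identities $\overline{(u,\varpi)^a}=(u,\varpi)^{-a}$ and $\overline{\psi(x)}=\psi(-x)$ together with the substitution $u\mapsto -u$ produce $(-1,\varpi)^{-a}g(-a,b)$, and $(-1,\varpi)^{-a}=(-1,\varpi)^a$ follows from $(-1,\varpi)^2=(1,\varpi)=1$ by bilinearity. For (2), set $k=-b-1\geq 1$ and decompose $O_F^\times$ into cosets of $H:=1+\varpi^k O_F$: the Hilbert symbol $(u,\varpi)^a$ is constant on each coset since $H\subseteq(O_F^\times)^n$, while translation by $H$ alters $\psi(\varpi^b u)$ by the nontrivial character $h\mapsto\psi(\varpi^b h)$ of $H$, so each coset integrates to zero. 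For (3), $b\geq 0$ makes $\psi(\varpi^b u)=1$ on $O_F$, and $(\cdot,\varpi)^a$ is trivial on $O_F^\times$ exactly when $n\mid a$ (by nondegeneracy of the Hilbert pairing); the integral is then either $\operatorname{vol}(O_F^\times)=q-1$ or, by orthogonality of characters of $O_F^\times/(O_F^\times)^n$, zero. For (4), when $n\mid a$ one computes
\begin{equation*}
g(a,-1)=\int_{O_F^\times}\psi(\varpi^{-1}u)\,du=\int_{O_F}\psi(\varpi^{-1}u)\,du-\int_{\varpi O_F}\psi(\varpi^{-1}u)\,du=0-1=-1,
\end{equation*}
since $\psi(\varpi^{-1}\cdot)$ is a nontrivial additive character of $O_F$ but trivial on $\varpi O_F$.

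Part (5) is the main obstacle, requiring a genuine orthogonality argument. I plan to compute $|g(a,-1)|^2=g(a,-1)\overline{g(a,-1)}$ as a double integral and substitute $u_1=u_2 t$ with $t\in O_F^\times$, which is measure-preserving in $u_1$ since $|u_2|=1$. Bilinearity yields
\begin{equation*}
|g(a,-1)|^2=\int_{O_F^\times}(t,\varpi)^a\left(\int_{O_F^\times}\psi(\varpi^{-1}u_2(t-1))\,du_2\right)dt.
\end{equation*}
The inner integral equals $q-1$ when $t\in 1+\varpi O_F$, since then $\varpi^{-1}u_2(t-1)\in O_F$ so that $\psi$ is trivial on it, and equals $-1$ when $t-1\in O_F^\times$, by the substitution $u_2\mapsto u_2/(t-1)$ reducing it to part (4) with $a=0$. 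Splitting the outer integral accordingly, using that $(t,\varpi)^a$ is trivial on $1+\varpi O_F$ (of measure $1$) and that $\int_{O_F^\times}(t,\varpi)^a\,dt=0$ when $n\nmid a$, one obtains $|g(a,-1)|^2=(q-1)\cdot 1+(-1)\cdot(-1)=q$, completing the proof.
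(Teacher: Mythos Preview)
Your proof is correct and is exactly the kind of ``routine manipulation of the defining integral'' that the paper invokes; the paper does not supply its own proof, so there is nothing further to compare. One tiny wording slip in part (2): on a coset $u_0H$ the character that appears is $h\mapsto\psi(\varpi^b u_0(h-1))$, not $h\mapsto\psi(\varpi^b h)$, but this does not affect the argument since it is still nontrivial on $H$.
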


We make a note about our usage of the symbol $\prod$ in the noncommutative case: Either the terms in the product will all commute, or we will write $\prod_{k=m}^n x_k$ for $x_m x_{m+1}\ldots x_n$ and $\prod_{k=n}^m x_k$ for $x_n x_{n-1},\ldots x_m$ where $m\leq n$.

\section{Central Extensions of Chevalley Groups}\label{1}
In this section, and any other section explicitly relying upon central extensions, we assume that $F$ is a non-archimedean local field, so that we have a good theory of the Hilbert symbol at our disposal. It will only be later when the connection to crystals is studied that we shall wish to work over other discrete valuation fields.

Since the group $G$ is equal to its commutator subgroup, it admits a universal central extension $E$ \cite{moore}. Thus we have a short exact sequence of groups
\[
1\rightarrow A\rightarrow E\rightarrow G\rightarrow 1
\]
with $A$ in the centre of $E$. Steinberg gives a presentation of $E$ in terms of generators and relations, which we now quote:

\begin{theorem}[Theorem 10, \cite{steinbergyale}]\label{relations} %quote french steinberg?
The group $E$ is generated by symbols $e_\alpha(x)$ where $\alpha\in\Phi$ and $x\in F$, subject to the relations
\[
e_\alpha(x)e_\alpha(y)=e_\alpha(x+y)
\]
\[
w_\alpha(x)e_\alpha(y)w_\alpha(-x)=e_{-\alpha}(-x^{-2}y)
\] where $w_\alpha(x)=e_\alpha(x)e_{-\alpha}(-x^{-1})e_\alpha(x)$, and
\begin{equation}\label{eecommute}
e_\alpha(x)e_\beta(y)=\big[\!\!\!\prod_{\substack{ i,j\in\Z^+ \\ i\alpha+j\beta=\gamma\in\Phi }} \!\!\!e_\gamma(c_{i,j,\alpha,\beta}x^iy^j)\big]e_\beta(y)e_\alpha(x)
\end{equation} for all $x,y\in F$ and $\a,\beta\in\Phi$ with $\a+\beta\neq 0$, where $c_{i,j,\alpha,\beta}$ is a fixed collection of integers, completely determined by the root system $\Phi$.
\end{theorem}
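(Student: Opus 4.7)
My plan is to establish this in two stages: first that the group $E_0$ defined by the stated generators and relations surjects onto $G$ with central kernel, then that it satisfies the universal property of a central extension.

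For the surjection $\pi\map{E_0}{G}$, I would verify that the Chevalley generators $x_\alpha(x)\in G$ satisfy all three listed families of relations (the additive law in each root subgroup, the Weyl conjugation identity, and the commutator formula (\ref{eecommute})). This is classical and follows from rank-one $\mathrm{SL}_2$ computations combined with the structure of the rank-two subsystems. Sending $e_\alpha(x)\mapsto x_\alpha(x)$ thus defines the desired surjection. To show $\ker\pi$ is central, I would take $z\in\ker\pi$ and argue that $z$ commutes with each generator $e_\alpha(x)$: since $\pi(z e_\alpha(x) z^{-1}) = x_\alpha(x) = \pi(e_\alpha(x))$, the conjugate differs from $e_\alpha(x)$ by a kernel element. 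Using the Weyl conjugation relation to reduce to positive roots and the commutator formula to track how conjugation propagates through root subgroups, one forces this difference to be trivial, so $\ker\pi$ centralizes every generator and hence all of $E_0$.

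The main obstacle, and the heart of the theorem, is universality. Given an arbitrary central extension $1\to C\to E'\to G\to 1$, I must construct a homomorphism $E_0\to E'$ lifting the identity on $G$. I would choose lifts $\tilde{x}_\alpha(x)\in E'$ of each Chevalley generator; the defining relations then hold only modulo $C$, producing three systems of $2$-cocycles that must be trivialized. The additive relation yields a cocycle on $(F,+)$ with values in $C$ which can be killed by an appropriate reparametrization of the lifts (using that $F$ is a field and $C$ is abelian). The Weyl conjugation relation is then handled by choosing $\tilde{w}_\alpha(x)$ consistently with the adjusted $\tilde{x}_\alpha$'s.

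The genuinely delicate step is the commutator relation: the cocycle attached to each pair of roots $\alpha,\beta$ with $\alpha+\beta\neq 0$ must be shown to vanish. This reduces to a case-by-case analysis on the rank-two subsystems $A_1\times A_1$, $A_2$, $B_2$, $G_2$, where the specific structure constants $c_{i,j,\alpha,\beta}$ conspire---together with the simple-connectedness of $G$ and the relations already imposed---to force the central correction to be trivial. Patching this rank-two information across $\Phi$ yields that the adjusted lifts $\tilde{x}_\alpha(x)$ satisfy exactly the Steinberg relations, so they assemble into a homomorphism $E_0\to E'$ as required. This last patching argument, which constitutes the bulk of Steinberg's original proof, is where I expect the greatest technical difficulty and where the hypothesis that $G$ is simply connected enters most crucially.
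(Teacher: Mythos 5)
The paper offers no proof of this statement: it is quoted verbatim as Theorem~10 of Steinberg's Yale lectures and used as a black box, so there is nothing internal to compare your argument against. Judged on its own terms, your outline has the right overall architecture (show the presented group $E_0$ is a central extension of $G$, then establish universality), and you correctly locate the heart of the matter in the rank-two analysis. But the two steps you compress are exactly where the content lies, and one of them is compressed in a way that would not work as stated.

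First, centrality of $\ker\pi$ does not follow from ``tracking how conjugation propagates through root subgroups.'' Steinberg's argument first proves uniqueness of expression in the subgroups $U^{\pm}\subset E_0$ generated by the $e_\alpha(x)$ with $\alpha$ of fixed sign (so that these map isomorphically to the corresponding subgroups of $G$), and then uses a Bruhat-type decomposition of $E_0$ to force $\ker\pi$ into the subgroup generated by the $h_\alpha(t)$, where its action on each root subgroup can be computed explicitly and shown to be trivial. Second, and more seriously, the cocycle-trivialization strategy for universality has a gap at the very first step: the additive relation produces a $2$-cocycle on $(F,+)$ with values in $C$, and there is no reason such a cocycle can be ``killed by an appropriate reparametrization''--- $H^2(F,C)$ need not vanish for an arbitrary abelian $C$. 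Steinberg's proof does not trivialize cocycles; it exploits the fact that in a central extension the commutator of two lifts depends only on their images, so that in rank at least two every $e_\alpha(t)$, being expressible via relation (\ref{eecommute}) as (part of) a commutator of other root elements, acquires a \emph{canonical} lift, and all three families of relations are then \emph{verified} for these canonical lifts by rank-two computations. The rank-one case (where the relation $w_\alpha(x)e_\alpha(y)w_\alpha(-x)=e_{-\alpha}(-x^{-2}y)$ is genuinely independent) needs a separate and harder argument, and the theorem carries field-size hypotheses ($|F|>4$, with a further exclusion in rank one) that your sketch omits --- harmless here since $F$ is a local field, but necessary in a complete proof. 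As written, your proposal is a plausible road map rather than a proof.
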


Note that the product in (\ref{eecommute}) is a product of commuting terms, so there is no possible ambiguity present with respect to order of multiplication.

Matsumoto \cite{matsumoto} gave a computation of the kernel $A$, and showed it to be equal to $K_2(F)$ except in type $C$ in which case $K_2(F)$ is canonically a quotient group of $A$. Here $K_2(F)$ denotes the algebraic K-theory of $F$ (in degree 2).

As $F$ is a non-archimedean local field containing $n$ $n$-th roots of unity, the Hilbert symbol gives a surjection from $K_2(F)$ to $\mu_n$. Thus in all cases, we have a surjection $A\rightarrow \mu_n$, so we can take the quotient of $E$ by the kernel of this surjection to obtain a central extension of $G$ by $\mu_n$, which we shall denote by $\G$. Let $p$ denote the quotient map from $\G$ to $G$. So we have the exact sequence
$$
1\rightarrow \mu_n\rightarrow\G\xrightarrow{p} G\rightarrow 1.
$$

From now on, we shall continue to use the notation $e_\alpha(x)$ and $w_\alpha(x)$ to refer to the corresponding elements of $\G$. These elements naturally satisfy all the relations in Theorem \ref{relations}. Also for any subgroup $H$ of $G$, we shall denote by $\H$ the induced covering group of $H$.

Define the elements $h_\a(x)\in\G$ by $h_\a(x)=w_\a(x)w_\a(-1)$ and let $l_\alpha=||\a^\vee||^2$ where the norm on the coroot lattice has been chosen such that the short coroots have length 1. Then the following identities hold in $\G$ \cite[\S6]{steinbergyale}.

\begin{equation}\label{ehcommute}
h_\alpha(x)e_\beta(y)h_\alpha(x)^{-1}=e_\beta(x^{\langle\beta,\alpha^\vee\rangle}y)
\end{equation}
\begin{equation}
h_\a(x)h_\a(y)=(x,y)^{l_\a}h_\a(xy)
\end{equation}
\begin{equation}\label{hhcommute}
h_\alpha(x)h_\beta(y)h_\alpha(x)^{-1}=h_\beta(y)(x,y)^{\langle\beta,\alpha^\vee\rangle l_\beta}
\end{equation}
\begin{equation}\label{inverseh}
h_\a(x)=h_{-\a}(x^{-1}).
\end{equation}
Recall that $(x,y)\in\mu_n$ in the above equations is the value of the Hilbert symbol and is central in $\G$.

\section{An Explicit Iwasawa Decomposition}
In this section, we shall give an algorithm that explicitly calculates the Iwasawa decomposition in $\G$. This algorithm will form the cornerstone for the rest of this paper.

Let $K=G(O_F)$, a maximal compact subgroup of $G$. Since we have assumed that $n$ and $q$ are coprime, the central extension $\G$ of $G$ splits over $K$ \cite[Lemma 11.3]{moore}. Thus there is a section $s\map{K}{\G}$ that is a homomorphism such that $\K$ is the direct product of its subgroups $s(K)$ and $\mu_n$. We shall choose such a splitting $s$ and identify $K$ with its image in $\G$ under $s$ when appropriate.

Let $T$ be the subgroup of $G$ generated by (the images of) all elements of the form $h_\alpha(x)$, and let $B$ be the subgroup of $G$ generated by $T$ and the images of $e_\alpha(x)$ for all $\alpha>0$ and $x\in F$.  Then $B$ is a Borel subgroup of $G$ with maximal torus $T$. The Iwasawa decomposition \cite{bruhattits} states that $G=BK$. This clearly lifts to $\G$, and so we may write $\G=\B K$.

The unipotent subgroup opposite to $B$ in $G$ is denoted $U^-$. Steinberg \cite{steinbergyale} shows that the universal central extension $E$ splits over $U^-$, so our central extension $\G$ certainly also does. Thus we may identify $U^-$ with a corresponding subgroup of $\G$. Explicitly, we have that $U^-$ is the subgroup of $\G$ generated by all $e_\a(x)$ where $\a\in\Phi^-$ and $x\in F$. We shall give an algorithm that explicitly calculates the Iwasawa decomposition of any $u\in U^-$.

\begin{lemma}\label{unipotentcompactsplitting}
If $\a\in\Phi$ and $x\in O_F$, then $e_\a(x)\in s(K)$.
\end{lemma}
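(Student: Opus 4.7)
The plan is to compare the given element $e_\alpha(x) \in \widetilde{G}$ with its image under the splitting $s$, and show that the ratio (which a priori lies in $\mu_n$) is forced to be trivial.

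First I would verify that $p(e_\alpha(x)) \in K$, which is standard: in the Chevalley presentation the underlying one-parameter subgroup $t \mapsto e_\alpha(t)$ of $G$ is defined over $\mathbb{Z}$, so for $x \in O_F$ the image lies in $G(O_F) = K$. Thus $s(p(e_\alpha(x)))$ makes sense and lies in the same fibre of $p$ as $e_\alpha(x)$. Define
\[
\sigma(x) = e_\alpha(x)\cdot s(p(e_\alpha(x)))^{-1} \in \ker p = \mu_n.
\]

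Next I would check that $\sigma\,{:}\,O_F\to\mu_n$ is a group homomorphism from the additive group $(O_F,+)$. This uses two facts: the Steinberg relation $e_\alpha(x)e_\alpha(y)=e_\alpha(x+y)$ (which holds in $\widetilde{G}$), and the fact that $s$ and $p$ are both group homomorphisms. Combining these, one computes $\sigma(x+y)=\sigma(x)\sigma(y)$, using that elements of $\mu_n$ are central in $\widetilde{G}$ to rearrange factors.

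The crux is then to show that $\sigma$ is identically trivial. The hypothesis that $n$ is coprime to the residue characteristic means that $n$ is a unit in $O_F$, so multiplication by $n$ is a bijection of $(O_F,+)$. In particular, every $x \in O_F$ can be written as $x = ny$ with $y \in O_F$, whence
\[
\sigma(x) = \sigma(ny) = \sigma(y)^n = 1,
\]
since $\sigma(y) \in \mu_n$. Therefore $e_\alpha(x) = s(p(e_\alpha(x))) \in s(K)$, as desired.

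I do not expect any serious obstacle here. The only point requiring a little care is that the construction of $\widetilde{G}$ identifies the abstract generator $e_\alpha(x)$ with a specific lift, and one must confirm that the Steinberg relation $e_\alpha(x)e_\alpha(y)=e_\alpha(x+y)$ used in step two is an equality in $\widetilde{G}$ (not merely modulo $\mu_n$); this is guaranteed by Theorem \ref{relations}, which gives the relations in the universal central extension $E$ and which therefore descend to $\widetilde{G}$. The rest of the argument is the standard observation that a pro-$p'$ target cannot receive a non-trivial homomorphism from an $n$-divisible source when $\gcd(n,p)=1$.
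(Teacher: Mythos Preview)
Your proof is correct and follows essentially the same approach as the paper: both compare the two splittings $t\mapsto e_\alpha(t)$ and $t\mapsto s(p(e_\alpha(t)))$ of the one-parameter subgroup, note that their difference is a homomorphism $\sigma\map{O_F}{\mu_n}$, and conclude by showing $\Hom(O_F,\mu_n)=0$. The paper simply asserts this last vanishing, whereas you spell out the reason (namely that $O_F$ is $n$-divisible since $n$ is a unit in $O_F$), which is a welcome bit of extra detail.
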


\begin{proof}
Let us fix one $\a\in\Phi$. The group $\{e_\a(t)|t\in O_F\}$ is a splitting of the corresponding subgroup of $K$, a subgroup that is isomorphic to the additive group $O_F$. Any two splittings of this subgroup must differ by an element of $\Hom(O_F,\mu_n)$, which is trivial, so the splitting is unique, and thus must agree with the restriction of $s$, proving that $e_\a(t)\in s(K)$.
\end{proof}

We shall now recall a result regarding the action of the Weyl group on a root system.

\begin{proposition}\label{ellw}
Let $w=s_{i_1}\ldots s_{i_k}$ be a reduced decomposition of $w\in W$ into simple reflections $s_{i_j}$. Then
\[
\{\alpha\in \Phi^+\mid w(\alpha)\in \Phi^-\}=\{\alpha_{i_k},s_{i_k}\alpha_{i_{k-1}},s_{i_k}s_{i_{k-1}}\alpha_{i_{k-2}},\ldots,s_{i_k}s_{i_{k-1}}\ldots s_{i_2}\alpha_{i_1}\}.
\]
\end{proposition}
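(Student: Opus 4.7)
The statement identifies the inversion set $\{\alpha \in \Phi^+ \mid w\alpha \in \Phi^-\}$ with a particular list of roots attached to the reduced decomposition. I would prove it by induction on $k = l(w)$. The base case $k = 0$ (so $w = 1$) is trivial, both sides being empty.

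For the inductive step, I would factor $w = w' s_{i_k}$ with $w' = s_{i_1}\cdots s_{i_{k-1}}$ reduced of length $k-1$. The core step is the auxiliary identity
$$\{\alpha \in \Phi^+ \mid w\alpha \in \Phi^-\} \; = \; \{\alpha_{i_k}\} \,\sqcup\, s_{i_k}\bigl(\{\alpha \in \Phi^+ \mid w'\alpha \in \Phi^-\}\bigr),$$
valid whenever $l(w' s_{i_k}) = l(w') + 1$. Granted this identity, the inductive hypothesis applied to $w'$ produces the list $\{\alpha_{i_{k-1}},\, s_{i_{k-1}}\alpha_{i_{k-2}},\, \ldots,\, s_{i_{k-1}}\cdots s_{i_2}\alpha_{i_1}\}$ for the inversion set of $w'$; applying $s_{i_k}$ entrywise and prepending $\alpha_{i_k}$ yields exactly the list claimed for $w$.

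The auxiliary identity rests on two classical facts: (i) $l(w's_{i_k}) > l(w')$ is equivalent to $w'\alpha_{i_k} \in \Phi^+$, and (ii) the simple reflection $s_{i_k}$ permutes $\Phi^+ \setminus \{\alpha_{i_k}\}$. From (i) one computes $w\alpha_{i_k} = w' s_{i_k}\alpha_{i_k} = -w'\alpha_{i_k} \in \Phi^-$, placing $\alpha_{i_k}$ in the inversion set of $w$; from (ii), for any other positive root $\alpha$ the root $s_{i_k}\alpha$ is again positive, and the equivalence $w\alpha \in \Phi^- \iff w'(s_{i_k}\alpha) \in \Phi^-$ matches the remaining inversions of $w$ with the $s_{i_k}$-image of the inversions of $w'$. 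Disjointness of the union is automatic, since $\alpha_{i_k}$ is not an inversion of $w'$ by (i), and its preimage $-\alpha_{i_k}$ under $s_{i_k}$ is not positive. The only (mild) obstacle is arranging this bookkeeping carefully; the result is essentially folklore and not technically deep.
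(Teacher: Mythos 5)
Your proof is correct: the induction on $\ell(w)$ via the identity $\mathrm{Inv}(w's_{i_k})=\{\alpha_{i_k}\}\sqcup s_{i_k}\,\mathrm{Inv}(w')$, using that $s_{i_k}$ permutes $\Phi^+\setminus\{\alpha_{i_k}\}$ and that $w'\alpha_{i_k}\in\Phi^+$ when the product is reduced, is exactly the standard argument. The paper does not prove the proposition itself but simply cites Bourbaki, and your argument is the one found there, so there is nothing to reconcile.
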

The proof may be found in any standard text on Lie theory, for example \cite[Ch VI, \S 6, Corollarie 2]{bourbaki}.

Let $\ii=(i_1,\ldots,i_N)$ be an $N$-tuple of elements of $I$ such that $w_0=s_{i_1}s_{i_2}\ldots s_{i_N}$ is a reduced decomposition of the long word $w_0$. Let $\I$ denote the set of all such tuples $\ii$.

For any $\ii\in\I$, the above theorem induces a total ordering $<_\ii$ on the set of positive roots, given by
$$\Phi^+=\{
\alpha_1<_\ii\cdots<_\ii\alpha_N \}\quad\text{   where   }\quad\alpha_j=s_{i_N}\ldots s_{i_{j+1}}\alpha_{i_j}.$$
With this notation, we have now defined $\a_j$ for $j\in I$ as well as for $j\in [N]$. We hope that this does not cause any confusion for the reader.

For each $k\in[N]$, let $G_k$ denote the set of elements $g\in\G$ which can be expressed in the form
\begin{equation}\label{gkdefn}
g=\left(\prod_{j=1}^{k-1}e_{-\alpha_j}(x_j)\right)h\left(\prod_{j=k+1}^{N}e_{\alpha_j}(x_j)\right)
\end{equation}
where all $x_j\in F$ and $h\in \T$.

\begin{lemma}\label{lemma1}
For all $z\in F$ and $g\in G_k$, there exists unique $z'\in F$ and $g'\in G_k$ such that
\[
e_{-\alpha_k}(z)g=g'e_{-\alpha_k}(z').
\]
\end{lemma}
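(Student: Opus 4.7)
The plan is to prove existence by an explicit commutation argument, whose main obstacle is pushing $e_{-\alpha_k}(z)$ rightward through the positive unipotent part of $g$; uniqueness will follow from the bijectivity of the resulting construction. Write $g=u_-\cdot h\cdot u_+$ with $u_-=\prod_{j=1}^{k-1}e_{-\alpha_j}(x_j)$ and $u_+=\prod_{j=k+1}^{N}e_{\alpha_j}(x_j)$.

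The initial segment $\{-\alpha_1,\ldots,-\alpha_k\}$ is closed in $\Phi^-$ by convexity of the ordering $<_\ii$ (a consequence of Proposition~\ref{ellw}), so the unipotent subgroup $U_k^-$ generated by the $e_{-\alpha_j}(F)$ for $j\leq k$ admits unique ordered factorization. Its normal subgroup $U_{<k}^-$ (generated for $j<k$) has quotient $U_k^-/U_{<k}^-\cong F$ on which the $e_{-\alpha_k}$-coordinate is invariant, so one gets $e_{-\alpha_k}(z)u_-=u'_-\cdot e_{-\alpha_k}(z)$ for a unique $u'_-\in U_{<k}^-$. Then (\ref{ehcommute}) passes $e_{-\alpha_k}(z)$ through $h$: $e_{-\alpha_k}(z)h=h\cdot e_{-\alpha_k}(z'')$ for an explicit $z''$.

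The main obstacle is pushing $e_{-\alpha_k}(z'')$ through $u_+$. I would induct downward on the smallest index $s>k$ with $x_s\neq 0$. Peeling off the leftmost factor $e_{\alpha_s}(x_s)$ and applying (\ref{eecommute}) produces
$$e_{-\alpha_k}(z'')\,e_{\alpha_s}(x_s)=\Bigl[\prod_\gamma e_\gamma(\cdots)\Bigr]\,e_{\alpha_s}(x_s)\,e_{-\alpha_k}(z''),$$
where $\gamma=j'\alpha_s-i\alpha_k$ ranges over roots with $i,j'\geq 1$. The key convexity dichotomy is: if $\gamma=\alpha_l$ is a positive root, then $\alpha_s=(\alpha_l+i\alpha_k)/j'$ is a positive combination, which convexity of $<_\ii$ places between $\alpha_l$ and $\alpha_k$; combined with $\alpha_k<_\ii\alpha_s$ this forces $l>s$. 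Dually, $\gamma=-\alpha_l$ negative forces $l<k$. Thus the commutator factorises as $N\cdot P$ with $N\in U_{<k}^-$ and $P$ a product of $e_{\alpha_l}(\cdots)$ for $l>s$, and these commute with each other by the remark after (\ref{eecommute}). Transport $N$ leftward past $h$ via (\ref{ehcommute}) and absorb it into $u'_-$ using unique factorization in $U_{<k}^-$, while $P$ merges into the remaining positive tail at positions $>s$. After this step the smallest nontrivial positive index has strictly increased, closing the induction.

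This construction defines a map $(g,z)\mapsto(g',z')$ from $G_k\times F$ to itself. Running the same Chevalley relations in reverse (moving $e_{-\alpha_k}(z')$ leftward through $g'$ using the same convexity dichotomy) defines a well-defined inverse map, so the construction is bijective, giving uniqueness of $(g',z')$.
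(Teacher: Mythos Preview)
Your approach is essentially the same as the paper's: both hinge on the ``convexity dichotomy'' that any root $\gamma$ arising in the commutator of $e_{-\alpha_k}$ with $e_{\alpha_s}$ (for $s>k$) satisfies either $\gamma\in\Phi^+$ with $\gamma>_\ii\alpha_s$, or $\gamma\in\Phi^-$ with $-\gamma<_\ii\alpha_k$. The paper proves this directly from Proposition~\ref{ellw}; you invoke it as ``convexity of $<_\ii$''. Note that the version you need is slightly stronger than the usual statement (that $\alpha+\beta$ lies between $\alpha$ and $\beta$): you require that any positive root which is a \emph{positive rational combination} of two positive roots lies between them. This does hold, because each initial segment of $<_\ii$ is an inversion set $\Phi^+\cap w\Phi^-$, and both it and its complement in $\Phi^+$ are closed under positive linear combinations landing in $\Phi$; but it deserves a word of justification.

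There is one bookkeeping gap in your induction. In the first pass, ``transport $N$ leftward past $h$'' works because nothing sits between $N$ and $h$. But in subsequent steps, positive factors (the processed $e_{\alpha_s}$ and the various $P$'s) have accumulated between $h$ and $e_{-\alpha_k}$; the new $N$ must be moved past these as well, and by the same dichotomy that commutation spawns further positive terms. Likewise, ``$P$ merges into the remaining positive tail at positions $>s$'' is problematic if that tail lies to the right of $e_{-\alpha_k}$, since moving $P$ rightward past $e_{-\alpha_k}$ again produces new negative terms. The paper sidesteps this by a different termination argument: it tracks the multiset of differences $j-i$ over all out-of-order pairs and shows each commutation step replaces one element by strictly larger ones, forcing termination. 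Your induction can be rescued cleanly by observing that the dichotomy you prove actually shows $\{-\alpha_1,\ldots,-\alpha_{k-1}\}\cup\{\alpha_{k+1},\ldots,\alpha_N\}$ is closed in $\Phi$, so $G_k=U^-_{<k}\cdot\widetilde{T}\cdot U^+_{>k}$ is a subgroup; then each commutator $N\cdot P$ and each peeled factor $e_{\alpha_s}(x_s)$ may simply be absorbed into $g'\in G_k$ without further manipulation, and the induction on $s$ closes immediately.
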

\begin{proof}
This involves repeated applications of the commutation relations presented in Section \ref{1} to push the $e_{-\a_k}(z)$ term in $e_{-\a_k}(z)g$ from the left to the right hand side of the product. It is straightforward to push a term of the form $e_{-\a_k}(\cdot)$ past another term of the form $e_{-\a_{k'}}(\cdot)$ or past a term of the form $h_\beta(\cdot)$. The key to ensuring that the manipulation claimed by the lemma is possible is thus understanding what terms appear in the commutator formula (\ref{eecommute}) when a term of the form $e_{-\a}(x)$ is pushed past a term of the form $e_\beta(y)$ for some $\a,\beta\in\Phi^+$, necessarily with $\a<_\ii\beta$.

Suppose that a term of the form $e_\gamma(w)$ appears in computing such a commutator. We will show that if $\gamma\in\Phi^+$, then $\gamma>_\ii\beta$ whereas if $\gamma\in\Phi^-$, then $-\gamma<_\ii\a$. This is enough to see that after applying the commutation relations a finite number of times, we will have written $e_{-\a_k}(z)g$ in the desired form $g'e_{-\a_k}(z')$. Briefly, the reason that this suffices is the following. First, we push the $e_{-\a_k}(\cdot)$ term to the rightmost side of the product. Then consider the multiset consisting of all integers $j-i$ for which there is a $e_{-\a_i}(\cdot)$ term appearing to the left of a $e_{\a_j}(\cdot)$ in our product. Such occurrences are undesirable. With each application of the commutation relations, an element $j-i$ in this multiset is replaced by a finite set of integers strictly larger than $j-i$. Since the elements of this multiset are integers bounded above by $N$, eventually our multiset becomes empty, as desired.

So now let us suppose that $\gamma$ is such that a term $e_\gamma(w)$ appears in pushing a term of the form $e_{-\a}(x)$ is past a term of the form $e_\beta(y)$ where $\a,\beta\in\Phi^+$ with $\a<_\ii\beta$.. Then we have $\gamma=i\beta-j\a$ for some positive integers $i$ and $j$. Write $\a=s_{i_N}\ldots s_{i_{r+1}}\a_{i_r}$ and $\beta=s_{i_N}\ldots s_{i_{s+1}}\a_{i_s}$. Note that by our assumption $\a<_\ii\beta$ we have $r<s$.

Let $w_1=s_{i_{r+1}}\ldots s_{i_N}$. Then using Proposition \ref{ellw} we obtain that $w_1(\beta)\in\Phi^-$ and $w_1(\a)\in\Phi^+$. Thus if $\gamma\in\Phi^-$, we have $-\gamma\in\Phi^+$ and $w_1(-\gamma)\in\Phi^+$ so we can conclude, again using Proposition \ref{ellw} that $-\gamma<_\ii\a$ as desired.

Now let $w_2=s_{i_{s-1}}\ldots s_{i-1}$. Note that $-w_0\a=s_{i_1}\ldots s_{i_{r-1}}\a_r$ and similarly for $\beta$. Thus we use Proposition \ref{ellw} for $w_2$ to obtain that $w_2(-w_0\a)\in\Phi^-$ and $w_2(-w_0\beta)\in\Phi^+$ so $w_2(-w_0\gamma)\in\Phi^+$. Since $\gamma\in\Phi^+$ if and only if $-w_0\gamma\in\Phi^+$ this last result implies that we can write $-w_0\gamma=s_{i_1}\ldots s_{i_{t-1}}\a_t$ for some $t>s$, implying $\gamma=s_{i_N}\ldots s_{i_{t+1}}\a_t$ and thus $\gamma>_\ii\beta$, as required.

\end{proof}

Now we are in a position to describe our algorithm for an explicit Iwasawa decomposition of $u\in U^-$. A sample computation in the case of $SL_3$ will be presented at the conclusion of this section. This algorithm runs as follows:

% I need a name for this algorithm
\begin{algorithm}\label{algorithm}

Given $u\in U^-$, we may write $u$ in the form
\[
u=\prod_{j=1}^Ne_{-\alpha_j}(x_j).
\]
for unique $x_1,\ldots,x_N \in F$.

For $k\in [N]$, we inductively define elements $p_k,p'_k\in\G$ and $y_k\in F$ as follows:

 Initialise $p_{N+1}=1_{\G}$. By decreasing induction on $k$, we use Lemma \ref{lemma1} to define $p'_k\in G_k$ and $y_k\in F$ such that $e_{-\alpha_k}(x_k)p_{k+1}=p'_ke_{-\alpha_k}(y_k)$.

Let \[
p_k=\begin{cases}p'_k &\text{if  } |y_k|\leq 1 \\
p'_kh_{\a_k}(y_k^{-1})e_\a(y_k) &\text{if  } |y_k|> 1. \end{cases}
\]

Once this computation has been completed for $k=1$, the algorithm halts, with its primary output as the group element $p_1$.
\end{algorithm}

In the above algorithm, variables $x_i$ and $y_i$ are introduced which we shall make frequent use of throughout the sequel. Both of these sets of variables are implicit functions of $u$ and $\ii$, however for ease of exposition, this dependence will be usually suppressed from the notation.

\begin{theorem}
The above algorithm produces a well defined output that computes the Iwasawa decomposition of $u\in U^-$, in the sense that writing $u=p_1k'$, we have $p_1\in\B$ and $k'\in K$.
\end{theorem}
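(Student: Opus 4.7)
The plan is to run a decreasing induction on $k$, maintaining the invariant
\[
u = \Bigl(\prod_{j=1}^{k-1} e_{-\alpha_j}(x_j)\Bigr)\, p_k\, \kappa_k, \qquad \kappa_k \in K
\]
(identifying $K$ with its image $s(K) \subset \widetilde{G}$). The base case $k = N+1$ is the given factorization of $u$ with $p_{N+1} = 1$ and $\kappa_{N+1} = 1$; at $k = 1$ the invariant reads $u = p_1 \kappa_1$, so the theorem reduces to verifying $\kappa_1 \in K$ and $p_1 \in \widetilde{B}$ upon termination.

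For the step from $k+1$ down to $k$, I substitute the factorization $e_{-\alpha_k}(x_k) p_{k+1} = p'_k e_{-\alpha_k}(y_k)$ supplied by Lemma \ref{lemma1} into the invariant, obtaining
\[
u = \Bigl(\prod_{j=1}^{k-1} e_{-\alpha_j}(x_j)\Bigr) p'_k\, e_{-\alpha_k}(y_k)\, \kappa_{k+1}.
\]
When $|y_k| \leq 1$, Lemma \ref{unipotentcompactsplitting} places $e_{-\alpha_k}(y_k) \in s(K)$, so it can be absorbed into $\kappa_k := e_{-\alpha_k}(y_k)\kappa_{k+1}$ to give $p_k = p'_k$. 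When $|y_k| > 1$, I invoke the rank-one Iwasawa identity
\[
e_{-\alpha_k}(y_k) = h_{\alpha_k}(y_k^{-1})\, e_{\alpha_k}(y_k) \cdot w_{\alpha_k}(-1)\, e_{\alpha_k}(y_k^{-1}),
\]
which is verified by a direct matrix computation in $SL_2$ and lifts to $\widetilde{G}$ through the Steinberg relations of Theorem \ref{relations}. Since $|y_k^{-1}| < 1$, the tail factor $w_{\alpha_k}(-1) e_{\alpha_k}(y_k^{-1})$ involves only generators with integral parameters, hence lies in $s(K)$ by Lemma \ref{unipotentcompactsplitting}; setting $p_k = p'_k h_{\alpha_k}(y_k^{-1}) e_{\alpha_k}(y_k)$ and $\kappa_k = w_{\alpha_k}(-1) e_{\alpha_k}(y_k^{-1}) \kappa_{k+1}$ preserves the invariant and matches the algorithm's prescription. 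Because neither case introduces new negative-root generators, at $k = 1$ the element $p_1$ contains only torus factors $h_\alpha(\cdot)$ and positive-root factors $e_{\alpha_j}(\cdot)$ (the negative-root prefix being empty), forcing $p_1 \in \widetilde{B}$.

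The main technical obstacle is ensuring at each stage that $p_{k+1}$ is in the $G_k$-form required as input to Lemma \ref{lemma1}. In Case 1 one must push the residual $e_{-\alpha_k}$-factor present in $p'_{k+1} \in G_{k+1}$ past the torus and positive-root part; in Case 2 one must additionally absorb the extra piece $h_{\alpha_{k+1}}(y_{k+1}^{-1}) e_{\alpha_{k+1}}(y_{k+1})$ into the canonical shape, using the commutation relations (\ref{ehcommute}) and (\ref{eecommute}) from Section \ref{1}. By an ordering argument analogous to that in the proof of Lemma \ref{lemma1}, any commutator generator $e_\gamma$ produced in these rearrangements has $\gamma$ appropriately placed in the order $<_\ii$, so the running product remains of the required form throughout the induction. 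Well-definedness of the output then follows from the uniqueness assertion in Lemma \ref{lemma1} together with the unambiguous case distinction on $|y_k|$.
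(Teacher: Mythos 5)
Your proof is correct and follows essentially the same route as the paper's: a decreasing induction peeling off factors of $s(K)$ via Lemma \ref{unipotentcompactsplitting} and a rank-one identity expressing $e_{-\alpha}(y)$ as $h_{\alpha}(y^{-1})e_{\alpha}(y)$ times an element of $s(K)$, with $p_1\in\B$ because $G_1$ has an empty negative-root prefix. The only (cosmetic) divergence is in restoring the $G_k$-form of $p_{k+1}$: the paper extracts the residual factor to the left, writing $p_{k+1}=e_{-\alpha_k}(z_k)p''$ with $p''\in G_k$ and merging $z_k$ into $x_k$, whereas you push it to the right; both rest on the same ordering argument from Lemma \ref{lemma1}.
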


\begin{proof}
It is a consequence of Lemma \ref{lemma1} that at all steps in the inductive procedure, $p_k\in G_k$ and $y_k\in F$ are uniquely determined. This is mostly straightforward, the only potential sticking point is that a priori we do not have $p_{k+1}\in G_k$. Instead we have $p_{k+1}=e_{-\a_k}(z_k)p''$ for some $z_k\in F$ and $p''\in G_k$ and thus writing $e_{-\a_k}(x_k)p_{k+1}=e_{-\a_k}(x_k+z_k)p_k''$ we see that the algorithm may continue unhindered. Thus the output of this algorithm is well defined.

Note that $p_1\in\B$ since $p_1'\in G_1$.
To show that $k'\in  K$ we need to show that if $|y_k|<1$ then $e_{-\a_k}(y_k)\in K$, and secondly if $|y_k|\geq 1$ then $e_{-\a_k}(y_k)^{-1}h_{\a_k}(y_k^{-1})e_\a(y_k)\in K$, as $k'$ is a product of such terms. Since we have an identity
$$
e_{-\a}(y)^{-1}h_\a(y^{-1})e_\a(y)=e_\a(y^{-1})w_{-\a}(1),
$$
this desired result follows from Lemma \ref{unipotentcompactsplitting}.
\end{proof}

Let us define variables $w_k$ for $k\in[N]$ as follows
$$ w_k=\begin{cases}y_k &\text{if  } |y_k|> 1 \\
1 &\text{if  } |y_k|\leq 1. \end{cases} $$
Then the following description of the diagonal part of $p_k$ is immediate.
\begin{proposition}\label{diagonalterms}
When writing $p_k$ in the form
\[
p_k=\left(\prod_{j=1}^{k-1}e_{-\alpha_j}(t_j)\right)h\left(\prod_{j=k+1}^{N}e_{\alpha_j}(t_j)\right),
\] with $h\in \T$, we have
$$ h=\prod_{i=N}^k h_{-\a_i}(w_i).$$
\end{proposition}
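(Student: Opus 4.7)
The plan is to argue by decreasing induction on $k$, starting from the convention $p_{N+1}=1$, whose torus part is the empty product $\prod_{i=N}^{N+1} h_{-\alpha_i}(w_i)=1$. Assuming inductively that the torus part of $p_{k+1}$ equals $H:=\prod_{i=N}^{k+1} h_{-\alpha_i}(w_i)$, I aim to show that the torus part of $p_k$ equals $H\cdot h_{-\alpha_k}(w_k)$.

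The central observation is that neither commutation relation (\ref{eecommute}) nor (\ref{ehcommute}) ever introduces a new torus element: the first commutes two root-group factors while producing only further $e_\gamma$ terms, while the second moves a torus factor past an $e_\beta$ by rescaling the coefficient of $e_\beta$ and leaving the torus element itself untouched. Thus, when the algorithm forms $p'_k$ by pushing $e_{-\alpha_k}(x_k)$ through $p_{k+1}$ via the argument in the proof of Lemma \ref{lemma1}, the torus part of $p'_k$ coincides with that of $p_{k+1}$, namely $H$.

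If $|y_k|\leq 1$, then $p_k=p'_k$ and $w_k=1$, so the torus part of $p_k$ is $H=H\cdot h_{-\alpha_k}(1)$; here one uses $h_\alpha(1)=1$, which follows from $h_\alpha(x)h_\alpha(y)=(x,y)^{l_\alpha}h_\alpha(xy)$ specialised to $y=1$ (the Hilbert symbol $(x,1)$ being trivial). If $|y_k|>1$, then $p_k=p'_k h_{\alpha_k}(y_k^{-1})e_{\alpha_k}(y_k)=p'_k h_{-\alpha_k}(w_k)e_{\alpha_k}(w_k)$ by (\ref{inverseh}) and the convention $w_k=y_k$; sliding $h_{-\alpha_k}(w_k)$ leftward through the positive unipotent tail of $p'_k$ via (\ref{ehcommute}) only rescales each $e_{\alpha_j}$-coefficient while preserving the torus factor, so it combines with $H$ to yield the desired torus part $H\cdot h_{-\alpha_k}(w_k)$. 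The leftover $e_{\alpha_k}(w_k)$, together with any $e_\gamma$ terms generated when absorbing it into the positive unipotent product via (\ref{eecommute}), all lie in root groups and hence do not contribute to the torus component.

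The main potential obstacle is checking that the (possibly metaplectic) commutation relations truly never surreptitiously contribute a torus term; once this invariance is granted, the induction runs essentially mechanically.
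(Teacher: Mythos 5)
Your proof is correct and fills in exactly the bookkeeping that the paper declares ``immediate'': the only torus factors ever introduced are the $h_{\alpha_k}(y_k^{-1})=h_{-\alpha_k}(w_k)$ from the algorithm itself, appended on the right in the order $k=N,\dots,1$, while the relations (\ref{eecommute}) and (\ref{ehcommute}) used in Lemma \ref{lemma1} never create a torus term. This is the same argument the paper intends, just written out in full.
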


\noindent Here we are freely using the fact that $h_{-\a}(w)=h_\a(w^{-1})$.

Define integers $m_k$ for $k\in[N]$ according to the valuation of the variables $w_k$ by $m_k=-v(w_k)$. (The $m$ variables, like $x$, $y$ and $w$ are functions of $u$ and $\ii$, again this is usually suppressed from the notation). We are now in a position to define the subsets of $U^-$ that will be of primary concern to us.

Given any $\ii\in\I$ and $\m=(m_1,\ldots,m_N)\in \N^N$, define
$$
C^\ii_\m=\{u\in U^-\mid m_k(u,\ii)=m_k\text{ for all }k\in[N]\}.
$$
For linguistic convenience, we shall at times refer to these sets as `cells.' For a fixed choice of $\ii$, the set of above cells clearly decompose $U^-$ as a disjoint union.

We now give an example of these cells in the case where $G=SL_3$. Factorise $u\in U^-$ as
\[
u=\begin{pmatrix}
  1 & 0 & 0 \\
  x & 1 & 0 \\
  y & z & 1
\end{pmatrix}
=\begin{pmatrix}
  1 & 0 & 0 \\
  x & 1 & 0 \\
  0 & 0 & 1
\end{pmatrix}
\begin{pmatrix}
  1 & 0 & 0 \\
  0 & 1 & 0 \\
  y & 0 & 1
\end{pmatrix}
\begin{pmatrix}
  1 & 0 & 0 \\
  0 & 1 & 0 \\
  0 & z & 1
\end{pmatrix}
\] where $x,y,z\in F$ and introduce the auxiliary variable $w=xz-y$. Then for a triple of non-negative integers $\m=(m_1,m_2,m_3)$, the cell $C^{212}_{\m}$ is given by the set of $u$ as above subject to the following conditions:
\[\begin{array}{lccc}
\mbox{If  }\ m_1,m_2,m_3>0: & v(z)=-m_3 & v(\frac{y}{z})=-m_2, & v(\frac{wz}{y})=-m_1. \\
\mbox{If  }\ m_1=0;m_2,m_3>0: & v(z)=-m_3, & v(\frac{y}{z})=-m_2, & |\frac{wz}{y}|\leq 1. \\
\mbox{If  }\ m_2=0;m_1,m_3>0: & v(z)=-m_3, & |\frac{y}{z}|\leq 1, & v(w)=-m_1. \\
\mbox{If  }\ m_2=m_1=0;m_3>0: & v(z)=-m_3, & |\frac{y}{z}|\leq 1, & |w|\leq 1. \\
\mbox{If  }\ m_3=0;m_1,m_2>0: & |z|\leq 1, & v(y)=-m_2, & v(\frac{x}{y})=-m_1. \\
\mbox{If  }\ m_3=m_1=0;m_2>0: & |z|\leq 1, & v(y)=-m_2, & |\frac{x}{y}|\leq 1. \\
\mbox{If  }\ m_3=m_2=0;m_1>0: & |z|\leq 1, & |y|\leq 1, & v(x)=-m_1. \\
\mbox{If  }\ m_3=m_2=m_1=0: & |z|\leq 1, & |y|\leq 1, & |x|\leq 1.
\end{array} \]

%For example, in the third case considered above, this follows from the following calculations, where we repeatedly use the rank one identity
%\(
%(\begin{smallmatrix}
%  1 & 0 \\
%  t & 1
%\end{smallmatrix})=(\begin{smallmatrix}
%  t^{-1} & 1 \\
%  0 & t
%\end{smallmatrix})(\begin{smallmatrix}
%  0 & -1 \\
%  1 & t^{-1}
%\end{smallmatrix}).
%\)
%In the subsequent list of equations, the letter $k$ refers to an element of $K$ but may represent different elements of $K$ on different lines. We see that $y_3=z$, $y_2=y/z$ and $y_1=xz-y$.

%\begin{eqnarray*}
%u &=&\begin{pmatrix}
%  1 & 0 & 0 \\
%  x & 1 & 0 \\
%  0 & 0 & 1
%\end{pmatrix}
%\begin{pmatrix}
%  1 & 0 & 0 \\
%  0 & 1 & 0 \\
%  y & 0 & 1
%\end{pmatrix}
%\begin{pmatrix}
%  1 & 0 & 0 \\
%  0 & 1 & 0 \\
%  0 & z & 1
%\end{pmatrix} \\
%&=&
%\begin{pmatrix}
%  1 & 0 & 0 \\
%  x & 1 & 0 \\
%  0 & 0 & 1
%\end{pmatrix}\begin{pmatrix}
%  1 & 0 & 0 \\
%  -y/z & 1/z & 1 \\
%  0 & 0 & z
%\end{pmatrix}
%\begin{pmatrix}
%  1 & 0 & 0 \\
%  0 & 1 & 0 \\
%  y/z & 0 & 1
%\end{pmatrix}
%k \\
%&=&
%\begin{pmatrix}
%  1 & 0 & 0 \\
%  0 & 1/z & 1 \\
%  0 & 0 & z
%\end{pmatrix}
%\begin{pmatrix}
%  1 & 0 & 0 \\
%  xz-y & 1 & 0 \\
%  0 & 0 & 1
%\end{pmatrix}
%k \\
%&=&
%\begin{pmatrix}
%  1/(xz-y) & 1 & 0 \\
%  0 & (xz-y)/z & 1 \\
%  0 & 0 & z
%\end{pmatrix}
%k.
%\end{eqnarray*}
%The manipulations are deceptively simple in this example, but we wished to present an example with one of the $m_i$ variables equal to zero, as otherwise we would be repeating calculations that have been performed previously in \cite{genminors}.

\section{Representation Theory and Metaplectic Whittaker Functions}\label{reptheory}

We require some knowledge of unramified genuine principal series representations of $\G$. This is treated in \cite{savin} in the simply laced case, and in \cite{mcn} in greater generality that encompasses all our needs. One can also consult \cite{kp} for a detailed study in the case $G=GL_n$. The meaning of the adjective `genuine' is that the central $\mu_n$ must act by a fixed faithful character. We fix an inclusion $\mu_n\subset\C^\times$ and thus assume that for any genuine representation $(\pi,V)$ of $\G$, we have that $\pi(\zeta g)=\zeta\pi(g)$ for all $\zeta\in\mu_n$ and $g\in\G$.

The group $\T$ is a Heisenberg group (a two-step nilpotent group). Since the commutator subgroup $[\T,\T]$ is contained in the central $\mu_n$, any irreducible genuine representation of $\T$ is induced from a character of a maximal abelian subgroup of $\T$ \cite[Corollary 5.2]{mcn}.

The group $\T\cap K$ is abelian since the Hilbert symbol is trivial on $O_F^\times\times O_F^\times$. Let $H$ be a maximal abelian subgroup of $\T$ containing $\T\cap K$.

Let $\chi$ be a genuine complex character of the group $H/(\T\cap K)$. We choose complex numbers $x_i$ for $i\in I$ with the property that $$
\chi\left(\prod_{i\in I}h_{\a_i}(\p^{m_i})\right)=\prod_{i\in I}x_i^{m_i}
$$
whenever the argument of $\chi$ above lies in $H$. This is always possible, the condition on $\{m_i\}_{i\in I}$ for the above product to lie in $H$ is that $m_i$ lie in a sublattice of $n\Z^I$. An explicit characterisation of this lattice is obtained in \cite{mcn} though we shall not require such a description here.

Let $(\pi_\chi,W_\chi)$ be the irreducible representation of $\T$ obtained by inducing $\chi$ from $H$ to $\T$. We obtain an unramified principal series representation $(\pi_\chi,V_\chi)$ of $\G$ by taking the normalised induced representation of $W_\chi$, considered as a representation of $\B$ (since $\T$ is a quotient of $\B$, we may inflate $W_\chi$ to a representation of $\B$). Explicitly, $V_\chi$ is realised as the space of functions $f\map{\G}{W_\chi}$ that are right invariant under an open compact subgroup and
$$
f(bg)=\delta^{1/2}(b)\pi_\chi(b)f(g)
$$
for all $b\in\B$ and $g\in\G$ where $\delta$ is the modular quasicharacter of $\B$. The action of $\G$ on $V_\chi$ is given by right translation.

The following theorem is largely contained in the works of Kazhdan and Patterson \cite{kp} and Savin \cite{savin}. The work \cite{mcn} contains the added generality necessary to state this result precisely.

\begin{theorem}
An unramified genuine principal series representation $(\pi_\chi,V_\chi)$ as above has a one-dimensional space of $K$-fixed vectors.
\end{theorem}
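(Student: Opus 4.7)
The plan is to reduce the statement, via the Iwasawa decomposition, to computing $\T\cap K$-invariants in the induced model of $W_\chi$, and then to apply Mackey theory together with the structure of the metaplectic cocycle coming from the Hilbert symbol.

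First I would apply Frobenius reciprocity together with the Iwasawa decomposition $\G=\B K$ (so that $\B\backslash\G/K$ consists of a single double coset) to obtain
$$
V_\chi^K \;=\; \Hom_K(\mathbf{1},V_\chi) \;\cong\; \Hom_{\B\cap K}(\mathbf{1},\delta^{1/2}\pi_\chi).
$$
Since $\delta^{1/2}$ is trivial on the compact group $\B\cap K$, and since $U\cap K$ acts trivially on $W_\chi$ (because $W_\chi$ is inflated from $\T$), the right-hand side collapses to $W_\chi^{\T\cap K}$. Thus the theorem is equivalent to $\dim W_\chi^{\T\cap K}=1$.

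For this I would apply Mackey's formula to $W_\chi=\Ind_H^\T\chi$, decomposing $W_\chi|_{\T\cap K}$ over the double cosets $H\backslash\T/(\T\cap K)$. Two structural observations simplify each Mackey term: (i) $[\T,\T]\subseteq\mu_n\subseteq H$, so conjugation by any $t\in\T$ preserves $H$, giving $t^{-1}Ht=H$; (ii) $\T\cap K\subseteq H$ because $H$ is maximal abelian and $\T\cap K$ is abelian. Together these imply that each double coset contributes a one-dimensional piece on which $\T\cap K$ acts by $k\mapsto\chi(tkt^{-1})=\chi([t,k])\chi(k)=\chi([t,k])$ (using $\chi|_{\T\cap K}=1$). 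Because $\chi$ is genuine it is faithful on $\mu_n$, so this character is trivial precisely when $[t,k]=1$ for every $k\in\T\cap K$; that is, precisely when $t$ lies in $Z:=Z_\T(\T\cap K)$.

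The main obstacle is then to prove that $Z=H$. For this I would translate to the alternating commutator form $c\colon T\times T\to\mu_n$ determined by the Hilbert symbol relations of Section~\ref{1}, and set $L=p(\T\cap K)=T(O_F)$, so that $p(Z)=L^\perp$ is the annihilator of $L$ under $c$. Maximality of $H$ yields the easy inclusion $p(H)\subseteq L^\perp$, so it suffices to show $L^\perp$ is itself isotropic: then its preimage in $\T$ is an abelian subgroup containing $H$, and maximality forces $L^\perp=p(H)$. Isotropy of $L^\perp$ is a tame-symbol computation: using identities such as $[h_\a(x),h_\beta(y)]=(x,y)^{\langle\beta,\a^\vee\rangle l_\beta}$, the condition $t\in L^\perp$ forces the torus coordinates of $t$ to have valuations in $n\Z$; but any Hilbert symbol $(x,y)$ with $v(x),v(y)\in n\Z$ is an $n$-th power in $\mu_n$ (hence trivial) by bilinearity, using that $(u,u')=1$ for $u,u'\in O_F^\times$ in the tame setting.

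With $Z=H$ in hand, the double coset space $H\backslash Z/(\T\cap K)$ is a single point, so Mackey supplies exactly one one-dimensional $\T\cap K$-invariant summand, yielding $\dim V_\chi^K=1$.
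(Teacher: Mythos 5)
The paper offers no proof of this theorem: it is quoted from Kazhdan--Patterson, Savin and \cite{mcn}, so there is nothing internal to compare your argument against. Your architecture is the standard one and is essentially sound: the reduction $V_\chi^K\cong W_\chi^{\T\cap K}$ via $\G=\B K$ and $B(O_F)=T(O_F)U(O_F)$; the Mackey decomposition $W_\chi|_{\T\cap K}\cong\bigoplus_{t\in\T/H}\chi^t|_{\T\cap K}$ (valid because $H\supseteq\mu_n\supseteq[\T,\T]$ is normal of finite index and contains $\T\cap K$); and the count of trivial constituents as $[Z_\T(\T\cap K):H]$, which reduces everything to showing that the annihilator $L^\perp$ of $L=T(O_F)$ under the commutator pairing is isotropic.

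The one step whose justification is wrong is precisely that isotropy claim. You assert that $t\in L^\perp$ forces the coordinate valuations of $t$ into $n\Z$. Writing $t=\prod_i h_{\a_i}(\p^{a_i}u_i)$ with $u_i\in O_F^\times$, the actual condition cut out by $L^\perp$ is $\sum_i c_{ij}a_i\equiv 0\pmod n$ for every $j$, where $c_{ij}=\langle\a_i,\a_j^\vee\rangle l_{\a_i}$ is the symmetrized Cartan matrix; this does not imply $n\mid a_i$ (for $SL_2$ with $n=2$ the condition is vacuous, yet the conclusion still holds because $\T$ is then abelian). Fortunately the isotropy itself is true and the repair is short: for $s=\prod_i h_{\a_i}(\p^{a_i}u_i)$ and $t=\prod_j h_{\a_j}(\p^{b_j}v_j)$ in $L^\perp$, bilinearity together with $(u,u')=1$ for units and $(\p,\p)=(\p,-1)^{-1}=1$ (this is where $\mu_{2n}\subseteq F$ enters, exactly as in the paper's remark that the group $A$ is abelian) gives
$$
[s,t]=\prod_j(\p,v_j)^{\sum_i c_{ij}a_i}\cdot\prod_i(u_i,\p)^{\sum_j c_{ij}b_j},
$$
and both families of exponents are divisible by $n$ by the defining congruences for $s,t\in L^\perp$ combined with the symmetry $c_{ij}=c_{ji}$. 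With that substitution your proof goes through; note also that it silently uses that $G$ is simply connected, so that every element of $T$ is $\prod_i h_{\a_i}(x_i)$ with the $x_i$ unique.
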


Let $\phi_K\map{\G}{W_\chi}$ be a non-zero element of $V_\chi^K$. We define a function $f\map{\G}{\C}$ related to $\phi_K$ by the following.
\begin{equation}\label{fdefn}
f\left(\zeta u\prod_{i\in I}h_{\alpha_i}(\p^{m_i})k\right)=\zeta\prod_{i\in I}(q^{-1}x_{\alpha_i})^{m_i}.
\end{equation} where $\zeta\in\mu_n$, $u\in U$, $m_i\in\Z$ and $k\in K$, using the Iwasawa decomposition to express an arbitrary $g\in\G$ as a product of such terms.

It is not immediately obvious that this is a well-defined function; we shall prove this as part of the following Propostion.

\begin{proposition}
There exists a linear functional $\lambda$ on $W_\chi$ such that $\lambda(\phi_K(g))=f(g)$ for all $g\in\G$.
\end{proposition}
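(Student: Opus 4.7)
My approach is to realize $W_\chi$ concretely as the induced representation $\mathrm{Ind}_H^{\T}\chi$, define $\lambda$ as evaluation at the identity, and use the $\B$-equivariance of $\phi_K$ to reduce the claim to an elementary computation on the torus. Realize $W_\chi$ as the space of functions $v\map{\T}{\C}$ satisfying $v(hs)=\chi(h)v(s)$ for $h\in H$, with $\T$ acting by right translation $(\pi_\chi(t)v)(s)=v(st)$, and define $\lambda\in W_\chi^*$ by $\lambda(v):=v(1)$. Let $v_0:=\phi_K(1)$; the $K$-invariance of $\phi_K$ combined with the $\B$-equivariance forces $\pi_\chi(k)v_0=v_0$ for all $k\in\T\cap K$, so $v_0$ is $(\T\cap K)$-invariant, and we normalise $\phi_K$ so that $\lambda(v_0)=v_0(1)=1$.

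For $g\in\G$, the Iwasawa decomposition produces a factorisation $g=\zeta u\,t\,k$ with $\zeta\in\mu_n$, $u\in U$, $t=\prod_{i\in I}h_{\alpha_i}(\p^{m_i})\in\T$, and $k\in K$ (absorbing any $\T\cap K$-factor into $k$). The $\B$-equivariance and $K$-invariance of $\phi_K$ give $\phi_K(g)=\zeta\cdot\delta^{1/2}(t)\cdot\pi_\chi(t)v_0$, and hence
\[
\lambda(\phi_K(g))=\zeta\cdot\delta^{1/2}(t)\cdot v_0(t).
\]
The modular character satisfies $\delta^{1/2}(h_{\alpha_i}(\p))=q^{-\langle\rho,\alpha_i^\vee\rangle}=q^{-1}$ for each simple root, whence $\delta^{1/2}(t)=\prod q^{-m_i}$. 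When $t\in H$, the left $H$-equivariance of $v_0$ combined with the definition of the $x_{\alpha_i}$ yields $v_0(t)=\chi(t)=\prod x_{\alpha_i}^{m_i}$, and therefore $\lambda(\phi_K(g))=\zeta\prod(q^{-1}x_{\alpha_i})^{m_i}=f(g)$. Since $\lambda\circ\phi_K$ is manifestly a well-defined function on $\G$, agreement with the stated formula simultaneously establishes the well-definedness of $f$.

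The essential technical subtlety arises in the case $t\notin H$, where the left $H$-equivariance cannot be applied directly. The remedy is to invoke the $(\T\cap K)$-invariance of $v_0$: by Mackey's formula applied to $\mathrm{Res}^{\T}_{\T\cap K}\mathrm{Ind}_H^{\T}\chi$, the dimension of $W_\chi^{\T\cap K}$ equals $|Z_\T(\T\cap K)/H|$, and the uniqueness of the $K$-fixed vector (from the theorem above) forces $Z_\T(\T\cap K)=H$. A direct commutator calculation based on (\ref{hhcommute}) and the bilinearity of the Hilbert symbol then shows $v_0(t)=0$ for $t\notin H$, so that $\lambda(\phi_K(g))$ vanishes on this locus; matching this vanishing against the literal formula for $f$---modulo the Iwasawa ambiguity by $(\T\cap K)\mu_n$---is precisely the well-definedness content absorbed into the Proposition.
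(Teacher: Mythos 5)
There is a genuine gap, and it is located exactly where you flag the ``essential technical subtlety.'' Your functional $\lambda = $ evaluation at $1$ in the induced model makes $\lambda(\phi_K(g))$ vanish whenever the Iwasawa torus part $t=\prod_i h_{\alpha_i}(\p^{m_i})$ lies outside $H$. But the function $f$ of (\ref{fdefn}) is defined to equal $\zeta\prod_i(q^{-1}x_{\alpha_i})^{m_i}$ for \emph{every} integer tuple $(m_i)$, and this is nonzero (the $x_i$ are nonzero complex numbers); the condition for $\prod_i h_{\alpha_i}(\p^{m_i})$ to lie in $H$ is that $(m_i)$ lie in a proper sublattice of $\Z^I$, so there really are elements $g$ with $t\notin H$ on which $f(g)\neq 0$. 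Hence your $\lambda$ cannot satisfy $\lambda\circ\phi_K=f$; the discrepancy on the locus $t\notin H$ is not a ``well-definedness ambiguity'' to be absorbed --- $f$ is unambiguously nonzero there --- but an outright failure of the chosen functional. (A secondary issue: your claim that $v_0(t)=0$ for $t\notin H$ presupposes that the spherical vector $\phi_K(1)$ is supported on the single coset $H$ in the induced model, which is not justified and is not needed in the correct argument.)

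The fix is to choose $\lambda$ differently. Let $A$ be the abelian group generated by the $h_{\alpha_i}(\p^{m_i})$ and pick coset representatives $a_j\in A$ for $\T/H$; the vectors $\pi_\chi(a_j)\phi_K(1)$ form a basis of $W_\chi$, and one defines $\lambda$ on this basis by $\lambda(\pi_\chi(a_j)\phi_K(1))=\chi(a_j)$, where $\chi$ has been extended from $A\cap H$ to $A$ by $\chi(\prod_i h_{\alpha_i}(\p^{m_i}))=\prod_i x_i^{m_i}$. In your model this $\lambda$ is a weighted sum of evaluation functionals over the cosets of $H$, not evaluation at $1$ alone. With this choice, for arbitrary $a=a_jh\in A$ one computes $\lambda(\phi_K(a))=(\delta^{1/2}\chi)(h)\,\chi(a_j)\cdot(\text{normalisation})=(\delta^{1/2}\chi)(a)$, matching $f$ on all of $A$ and not just on $A\cap H$; genuineness, left $U$-invariance and right $K$-invariance then extend the agreement to all of $\G$, and well-definedness of $f$ follows as a byproduct. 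The remainder of your argument (the reduction via the Iwasawa decomposition and the computation of $\delta^{1/2}$) is sound and is essentially the paper's reduction.
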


%H is normal since all commutators are central

\begin{proof}
Let $A$ be the group of all elements of the form $\prod_{i\in I} h_{\a_i}(\p^{m_i})$ where $m_i\in\Z$. This is abelian because our assumption that $\mu_{2n}\subset F$ forces all commutators calculated from (\ref{hhcommute}) to vanish.

Choose a set of coset representatives $a_j\in A$ for $\T/H$. Then the set of elements $\{\pi_\chi(a_j)\phi_K(1)\}$ forms a basis of $W_\chi$. Thus there exists a linear functional $\lambda\map{W_\chi}{\C}$ such that $$\lambda(\pi_\chi(a_j)\phi_K(1))=\chi(a_j)$$ where $\chi\map{A}{\C}$ is the extension of $\chi$ on $A\cap H$ given by $\chi(\prod_{i\in I}h_{\a_i}(\p^{m_i}))=\prod_{i\in I}x_i^{m_i}$.

Now suppose we have an arbitrary $a\in A$. Then we can write $a=a_j h$ for some $j$ and some $h\in H$ and hence \begin{eqnarray*}\lambda(\phi_K)(a)&=&\lambda(\pi_\chi(a_j)\phi_K(h)) \\
&=&(\delta^{1/2}\chi)(h)\lambda(\pi_\chi(a_j)\phi_K(1)) \\
&=&(\delta^{1/2}\chi)(h)
(\delta^{1/2}\chi)(a_i) \\
&=&(\delta^{1/2}\chi)(a).\end{eqnarray*}

 Thus $\lambda\circ\phi_K$ and $f$ agree on $A$. Since $\phi_K$ is a genuine function, left $U$-invariant and right $K$-invariant, this is enough to show that $\lambda\circ\phi_K=f$, thus proving the proposition, and as a consequence ensuring that the function $f$ is well-defined.
\end{proof}

%. We need to know that $\phi_K(1)$ is supported on $H$. We obtain a $W_\chi$-valued Whittaker function by integrating $\phi_K$ against an additive character, so to obtain a $\C$-valued Whittaker function, we need to compose this with a functional $\lambda$ on $W_\chi$. Choose an extension of $\chi$ to $A$, which is the (abelian) group of elements of the form $\prod_{\alpha\in\Delta}h_\alpha(\p^{m_\alpha})$.

Our main thrust of this paper involves computing the integral of $f$ over $U^-$, as well as computing the integral of $f$ against a character of $U^-$. We shall now explain the representation theoretic meanings behind these integrals.

The Weyl group $W$ acts on principal series representations. As in the reductive case, we have \cite{kp,mcn}
$$
\dim(\Hom(V_\chi,V_\chi^{w_0}))=1.
$$
Letting $T$ denote a non-zero element of the above space of intertwining operators, and $\phi_K$ and $\phi'_K$ be spherical vectors for $V_\chi$ and $V_\chi^{w_0}$ respectively, we have $T\phi_K=g(\chi)\phi_K'$ for some meromorphic function $g$. The representation $V_\chi$ will be irreducible whenever $g$ does not have a zero or a pole, and we can write $g$ as the integral
\begin{equation*}%\label{gkintegral}
 \int_{U^-} f(u)du.
\end{equation*} with $f$ as in (\ref{fdefn}) above.
It will transpire that this integral is only dependent on $\chi$, not on the choices of $x_i$ that were made in defining $f$.

Now let $\psi$ denote a character of $U^-$ such that the restriction to the subgroup $U_{-\a}=\{e_{-\a}(x)\mid x\in F\}\cong F$ for each simple $\a$ has conductor $O_F$. As in the reductive case, the integral
\[
W(g)= \int_{U^-} \phi_K(ug)\psi(u)du
\]
defines a Whittaker function; it is a spherical vector in a Whittaker model for $V$. The above integral actually defines a $W_\chi$-valued Whittaker function, so to obtain a complex valued Whittaker function, we merely need to compose with a linear functional $\lambda$, and thus we are lead to study the integral
\[
 \int_{U^-} f(ug)\psi(u)du.
\]
Since a Whittaker function transforms on the left under $U$ by $\psi$, on the right under $K$ trivially, and is genuine, a Whittaker function is determined by the values it takes on the maximal torus $T$, by the Iwasawa decomposition. If $g\in T$, making a change of coordinate in the above integral, replacing $u$ by $gug^{-1}$, means that one is instead lead to evaluating the integral
\[
 \int_{U^-} f(u)\psi(gug^{-1})du
\]
where a relatively trivial constant factor has been discarded.

The lattice $\La$ is the coroot lattice of the corresponding adjoint group $G^{\text{ad}}$, and thus acts by conjugation on $U^-$ and hence on the set of characters of $U^-$. Explicitly, the action of $\la\in \La$ on our fixed character $\psi$ creates another character $\psi_\la$ defined by
$$
\psi_\la(u)=\psi(\p^\la u \p^{-\la}).
$$
Here, although $\p^\la$ is not in $G$, it is in $G^{\text{ad}}$ which canonically shares the same unipotent subgroups as $G$, so acts on $U^-$ by conjugation.

In this language, explicitly evaluating a Whittaker function becomes equivalent to the problem of evaluating the integral
\begin{equation}\label{iladefn}
I_\la= \int_{U^-} f(u)\psi_\la(u)du.
\end{equation}

For the remainder of this paper, our major goal is to obtain a method for evaluating the above integral.

There is a simple case where we know that this integral vanishes. This vanishing property, together with its proof, is exactly the same as in the reductive case, as shown in \cite[Lemma 5.1]{cs}.

\begin{proposition}
Unless $\la$ is dominant, we have $I_\la=0$.
\end{proposition}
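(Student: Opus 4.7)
My plan is to adapt the classical Casselman--Shalika vanishing argument. I would locate an element $v$ of the compact subgroup $U^-\cap K$ on which $\psi_\la$ is nontrivial, and then use the right $K$-invariance of $f$ (inherited from $\phi_K$ being a $K$-fixed vector in $V_\chi$) to translate by $v$ and force $I_\la$ to equal $\psi_\la(v)\,I_\la$, which is $0$ as soon as $\psi_\la(v)\ne 1$.

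For the first step I would exhibit a simple root $\a$ and a $c\in O_F$ with $\psi_\la(e_{-\a}(c))\ne 1$. Applying the conjugation identity (\ref{ehcommute}) to $\p^\la$ gives
\[
\p^\la\,e_{-\a}(c)\,\p^{-\la}=e_{-\a}(\p^{-\langle\a,\la\rangle}c),
\]
so $\psi_\la(e_{-\a}(c))=\psi\bigl(e_{-\a}(\p^{-\langle\a,\la\rangle}c)\bigr)$. Non-dominance of $\la$ should produce a simple root $\a$ for which $\p^{-\langle\a,\la\rangle}O_F$ strictly contains $O_F$; combined with the hypothesis that $\psi|_{U_{-\a}}$ has conductor $O_F$, this guarantees the character $c\mapsto\psi(e_{-\a}(\p^{-\langle\a,\la\rangle}c))$ on the additive group $O_F$ is not identically trivial, whence some $c\in O_F$ gives the required $v=e_{-\a}(c)$. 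By Lemma \ref{unipotentcompactsplitting}, $v\in K$, so $v\in U^-\cap K$.

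For the second step I perform the change of variable $u=u'v$ in $I_\la$. Haar measure on $U^-$ is right-invariant, so $du=du'$ and this is a measure-preserving bijection of $U^-$. Under the substitution the integrand transforms as $f(u'v)=f(u')$ (because $v\in K$ and $f$ is right $K$-invariant) and $\psi_\la(u'v)=\psi_\la(u')\psi_\la(v)$ (because $\psi_\la$ is a character of $U^-$), yielding
\[
I_\la=\psi_\la(v)\,I_\la.
\]
Since $\psi_\la(v)\ne 1$, this forces $I_\la=0$.

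I expect the main obstacle to be the first step: tracking the sign of $\langle\a,\la\rangle$ so that non-dominance of $\la$ corresponds to $\p^{-\langle\a,\la\rangle}O_F\not\subseteq O_F$ for some simple $\a$, and hence to nontriviality of $\psi_\la$ on $U_{-\a}(O_F)$. This convention-matching aside, the argument is the routine substitution from \cite[Lemma 5.1]{cs}, with the only new ingredient being Lemma \ref{unipotentcompactsplitting} to confirm that the translating element $v$ genuinely lies in the chosen splitting of $K$ inside $\G$.
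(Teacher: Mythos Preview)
Your proposal is correct and is exactly the approach the paper takes: the paper offers no independent proof, simply citing \cite[Lemma~5.1]{cs}, which is precisely the right-translation argument you have written out. Your observation that Lemma~\ref{unipotentcompactsplitting} is the one extra ingredient needed in the metaplectic setting (to ensure the translating element $e_{-\alpha}(c)$ lies in the chosen copy of $K$ inside $\G$) is apt, and the sign-convention bookkeeping you flag is indeed the only point requiring care.
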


\section{Initial Calculations with the Cell Decomposition}\label{initial}
In this section we shall present some initial calculations involving the decomposition of $U^-$ into the disjoint union
$$
U^-=\bigsqcup_{\m\in\N^N} C^\ii_\m.
$$
This will immediately result in a metaplectic Gindikin-Karpelevic formula, generalising the type A results of Bump and Nakasuji \cite{bumpnakasuji}. The intermediate results we obtain will also prove to be useful in the sequel when we are evaluating the Whittaker function.

We choose a Haar measure on $F$ such that $O_F$ has volume 1, and denote it by $dx$. Now choose a normalisation of Haar measure on $U^-$ such that $du=\prod_{i=1}^N dx_i$. The variables $x_i$ here that we use are those introduced in Algorithm 1, we also freely use the $y_i$ and $w_i$ variables defined in the main algorithm (which all depend on a choice of $\ii$). Under this normalisation, we can now compute the volume of the sets $C_\m^\ii$.

\begin{proposition}
The volume of $C_\m^\ii$ is
\[
\prod_{i=1}^N q^{\langle\rho,\alpha_i^\vee\rangle m_i}(1-\frac{1-\delta_{m_i,0}}{q})
\] ($\delta$ here represents the Kronecker delta function).
\end{proposition}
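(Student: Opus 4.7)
The plan is to change variables from $x=(x_1,\ldots,x_N)$ (coming from the factorisation $u=\prod_{j=1}^N e_{-\alpha_j}(x_j)$) to $y=(y_1,\ldots,y_N)$, the variables produced by Algorithm~\ref{algorithm}. The reason is that the defining conditions of $C_\m^\ii$ are transparent in the $y$-variables: $|y_k|=q^{m_k}$ when $m_k>0$ and $|y_k|\leq 1$ when $m_k=0$. Since $y_k$ is built from $x_k$ together with $p_{k+1}$, which depends only on $x_{k+1},\ldots,x_N$, the Jacobian matrix $(\partial y_i/\partial x_j)$ is upper triangular and its absolute determinant equals $\prod_k |\partial y_k/\partial x_k|$.

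To compute the diagonal entry $\partial y_k/\partial x_k$ I would trace step $k$ of the algorithm. Writing $p_{k+1}=e_{-\alpha_k}(z_k)p''$ with $p''\in G_k$ and $z_k$ independent of $x_k$, we have $e_{-\alpha_k}(x_k)p_{k+1}=e_{-\alpha_k}(x_k+z_k)p''$, so the task is to push the leading factor rightwards across $p''$. By Proposition~\ref{diagonalterms} the torus part of $p''$ is $h=\prod_{i=N}^{k+1}h_{-\alpha_i}(w_i)$, and relation (\ref{ehcommute}) shows that commuting $e_{-\alpha_k}$ past $h$ multiplies its coefficient by $c_k:=\prod_{i=k+1}^N w_i^{-\langle\alpha_k,\alpha_i^\vee\rangle}$. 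Commuting past the remaining $e_{\pm\alpha_j}$ factors produces terms $e_\gamma(\cdot)$, but by the root-theoretic argument in the proof of Lemma~\ref{lemma1} these have $\gamma\neq-\alpha_k$, so they get absorbed into $p'_k$ and do not contribute to $y_k$. Hence $y_k=c_k(x_k+z_k)$, so $\partial y_k/\partial x_k=c_k$, which on $C_\m^\ii$ (where $|w_i|=q^{m_i}$) has absolute value $q^{-\sum_{i>k}m_i\langle\alpha_k,\alpha_i^\vee\rangle}$.

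Changing variables in the volume integral and using $\mathrm{vol}\{y_k:|y_k|=q^{m_k}\}=q^{m_k}(1-q^{-1})$ for $m_k>0$ and $\mathrm{vol}\{y_k:|y_k|\leq 1\}=1$ for $m_k=0$, both of which are summarised by $q^{m_k}(1-(1-\delta_{m_k,0})/q)$, we obtain
\[
\mathrm{vol}(C_\m^\ii)=q^{\sum_{k<i}m_i\langle\alpha_k,\alpha_i^\vee\rangle}\prod_{k=1}^N q^{m_k}\Bigl(1-\tfrac{1-\delta_{m_k,0}}{q}\Bigr).
\]
Combining exponents via $\sum_k m_k+\sum_{k<i}m_i\langle\alpha_k,\alpha_i^\vee\rangle=\sum_i m_i\bigl(1+\sum_{k<i}\langle\alpha_k,\alpha_i^\vee\rangle\bigr)$, the claim reduces to the combinatorial identity
\[
1+\sum_{k<i}\langle\alpha_k,\alpha_i^\vee\rangle=\langle\rho,\alpha_i^\vee\rangle.
\]

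This identity is the main point requiring an argument. To prove it I would set $v_i=s_{i_{i+1}}\cdots s_{i_N}$ and verify that $v_i\alpha_i=\alpha_{i_i}$ is simple, while Proposition~\ref{ellw} identifies $v_i\{\alpha_1,\ldots,\alpha_i\}$ with the inversion set $B_i$ of $s_{i_1}\cdots s_{i_i}$. Then, by $W$-invariance of the pairing and the classical identity $\sum_{\beta\in B_i}\beta=\rho-(s_{i_1}\cdots s_{i_i})^{-1}\rho$,
\[
\sum_{j\leq i}\langle\alpha_j,\alpha_i^\vee\rangle=\sum_{\beta\in B_i}\langle\beta,\alpha_{i_i}^\vee\rangle=\langle\rho,\alpha_{i_i}^\vee\rangle-\langle(s_{i_1}\cdots s_{i_i})^{-1}\rho,\alpha_{i_i}^\vee\rangle.
\]
Using $(s_{i_1}\cdots s_{i_i})^{-1}=v_iw_0$ (immediate from $s_{i_1}\cdots s_{i_N}=w_0$) and $w_0\rho=-\rho$, the right-hand side simplifies to $1+\langle\rho,\alpha_i^\vee\rangle$, and subtracting the $j=i$ term $\langle\alpha_i,\alpha_i^\vee\rangle=2$ yields the desired identity.
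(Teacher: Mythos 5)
Your proposal is correct, and its computational core coincides with the paper's: both arguments change variables from $(x_1,\ldots,x_N)$ to $(y_1,\ldots,y_N)$, observe that the Jacobian is triangular because $y_k$ depends only on $x_k,\ldots,x_N$, extract the diagonal entry $\prod_{i>k}|w_i|^{-\langle\alpha_k,\alpha_i^\vee\rangle}$ from the commutation of $e_{-\alpha_k}$ past the torus part of $p_{k+1}$ (Proposition \ref{diagonalterms} and relation (\ref{ehcommute})), note that the cross terms produced by (\ref{eecommute}) never land back in the $-\alpha_k$ root subgroup by the root-theoretic analysis of Lemma \ref{lemma1}, and finally reduce everything to the identity $\langle\rho,\alpha_i^\vee\rangle=1+\sum_{k<i}\langle\alpha_k,\alpha_i^\vee\rangle$. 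Where you genuinely diverge is in the proof of that identity, which the paper isolates as a separate lemma. The paper first argues that the right-hand side is independent of the reduced word $\ii$ (reducing to a rank-two verification that it omits), and then averages over $\ii$ and the reversed word $\ii'=(\tau(i_N),\ldots,\tau(i_1))$, for which the order $<_\ii$ is exactly reversed, so that the two sums together give $\tfrac12\sum_{\alpha\in\Phi^+}\langle\alpha,\beta^\vee\rangle=\langle\rho,\beta^\vee\rangle$. You instead conjugate by $v_i=s_{i_{i+1}}\cdots s_{i_N}$, which sends $\alpha_i$ to the simple root $\alpha_{i_i}$ and, by Proposition \ref{ellw}, sends $\{\alpha_1,\ldots,\alpha_i\}$ onto the inversion set of $s_{i_1}\cdots s_{i_i}$, and then invoke the classical formula $\sum_{\beta\in\mathrm{Inv}(w)}\beta=\rho-w^{-1}\rho$ together with $(s_{i_1}\cdots s_{i_i})^{-1}=v_iw_0$ and $w_0\rho=-\rho$; I have checked the bookkeeping ($1+\langle\rho,\alpha_i^\vee\rangle$ minus the $j=i$ term $2$) and it is right. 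Your route is more self-contained in that it needs neither braid-move invariance nor the omitted rank-two case analysis, at the cost of invoking the standard identity for $\rho-w^{-1}\rho$; the paper's averaging trick, by contrast, packages the whole computation into one line once word-independence is granted. Both arguments are valid.
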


\begin{proof}
We wish to calculate $\int_{C^\ii_\m} 1\ dx_1\ldots dx_N$ and it is easy to see that
$$
\int_{C^\ii_\m} 1\ dy_1\ldots dy_N =\prod_{i=1}^N q^{m_i}(1-\frac{1-\delta_{m_i,0}}{q}).
$$
So we need to calculate the Jacobian for the change of coordinates from $x_1,\ldots,x_N$ to $y_1,\ldots,y_N$, the formulae for which are obtained as a byproduct of Algorithm 1.

In our algorithm, consider what happens to the argument of the term $e_{-\a_k}(\cdot)$ as $e_{-\a_k}(x_k)$ is pushed passed all other terms in $p_{k+1}$ to the right hand side of the product, where it emerges as $e_{-\a_k}(y_k)$. Apart from the initial step where a constant (that is independent of $x_k,\ldots,x_N$) is added to the argument, the only time when the argument of $e_{-\a_k}(\cdot)$ changes is when it moves past a term of the form $h_{-\beta}(w_\beta)$, which has the effect of multiplying the argument by $w_k^{-\langle\a_k,\beta^\vee\rangle}$. Here we are using (\ref{ehcommute}) as well as Proposition \ref{diagonalterms}. Necessarily, we must have that $\beta>_\ii\a$ for this to occur.

Thus we have
$$
dy_k=\left(\prod_{i=k+1}^N |w_{\a_i}|^{-\langle\a_k,\a_i^\vee\rangle}\right) dx_k.
$$

Multiplying over all $k$ yields
$$
dy_1\ldots dy_N=\left( \prod_{i=1}^N q^{ -m_i\sum_{j=1}^{i-1} \langle\a_j,\a_i^\vee\rangle } \right)dx_1\ldots dx_N.
$$

Thus the proposition follows from the following lemma.
\end{proof}

\begin{lemma}
For all $\beta\in \Phi^+$, we have
$$ \langle\rho,\beta^\vee\rangle=1+\sum_{\a<_\ii\beta}\langle\a,\beta^\vee\rangle.$$
\end{lemma}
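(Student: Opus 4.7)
The plan is to leverage the classical Weyl group identity $\rho - u\rho = \sum_{\gamma \in \Phi^+,\ u^{-1}\gamma \in \Phi^-}\gamma$ in combination with Proposition~\ref{ellw}. Fix $\beta = \alpha_k$ for some $k \in [N]$, and consider the partial product $P := s_{i_N} s_{i_{N-1}} \cdots s_{i_k}$. This is automatically a reduced decomposition, being a suffix of the reduced decomposition $w_0 = s_{i_1}\cdots s_{i_N}$. A direct computation from the definition $\alpha_k = s_{i_N}\cdots s_{i_{k+1}}\alpha_{i_k}$ combined with $s_{i_k}\alpha_{i_k} = -\alpha_{i_k}$ gives $P\alpha_{i_k} = -\alpha_k$, so in particular $P^{-1}\alpha_k^\vee = -\alpha_{i_k}^\vee$.

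Applying Proposition~\ref{ellw} to the reduced decomposition $P^{-1} = s_{i_k} s_{i_{k+1}} \cdots s_{i_N}$ shows that the positive roots sent to negative roots by $P^{-1}$ are precisely the tail $\{\alpha_k, \alpha_{k+1}, \ldots, \alpha_N\}$ of the $<_\ii$-ordering. The classical Weyl identity then yields $\sum_{m=k}^N \alpha_m = \rho - P\rho$. Subtracting from $2\rho = \sum_{\gamma>0}\gamma$ gives the complementary sum
$$\sum_{j=1}^{k-1}\alpha_j \;=\; 2\rho - \sum_{m=k}^N\alpha_m \;=\; \rho + P\rho.$$

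Pairing both sides with $\alpha_k^\vee$, using $W$-invariance of $\langle\cdot,\cdot\rangle$ together with $P^{-1}\alpha_k^\vee = -\alpha_{i_k}^\vee$ and the fact that $\alpha_{i_k}$ is a simple root (so $\langle\rho, \alpha_{i_k}^\vee\rangle = 1$), gives
$$\sum_{j=1}^{k-1} \langle\alpha_j, \alpha_k^\vee\rangle \;=\; \langle \rho, \alpha_k^\vee\rangle + \langle \rho, P^{-1}\alpha_k^\vee\rangle \;=\; \langle \rho, \alpha_k^\vee\rangle - 1,$$
which rearranges to the desired identity.

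There is no substantial obstacle here: once one notices that Proposition~\ref{ellw} identifies the tail $\{\alpha_k, \ldots, \alpha_N\}$ of the ordering as the set of positive roots inverted by an explicit Weyl group element, the rest is essentially forced. The only mild subtlety is that the lemma is stated in terms of the prefix sum $\sum_{\alpha <_\ii \alpha_k}$, which is why one invokes $2\rho = \sum_{\gamma>0}\gamma$ to convert the tail sum produced by the Weyl identity into the desired head sum.
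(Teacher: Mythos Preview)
Your proof is correct and is genuinely different from the paper's argument. The paper first establishes that the right-hand side is independent of the choice of $\ii$ by reducing to rank~2 and performing an (omitted) case-by-case check of the braid relations; it then takes two reduced expressions $\ii$ and $\ii'=(\tau(i_N),\ldots,\tau(i_1))$ whose associated orderings are reverse to one another, and averages the two instances of the identity so that the prefix sums combine into the full sum $\sum_{\alpha\in\Phi^+}\langle\alpha,\beta^\vee\rangle=2\langle\rho,\beta^\vee\rangle$.

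Your route via the classical identity $\rho-w\rho=\sum_{\gamma>0,\,w^{-1}\gamma<0}\gamma$ applied to the suffix $P=s_{i_N}\cdots s_{i_k}$ is more direct: it avoids both the rank-2 case analysis and the averaging trick, and it makes transparent why the constant $1$ appears (as $\langle\rho,\alpha_{i_k}^\vee\rangle$ for the simple root $\alpha_{i_k}$). The paper's approach does have the minor side benefit of isolating the $\ii$-independence as a standalone fact, but since your argument proves the identity for every $\ii$ directly, that independence is recovered anyway.
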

\begin{proof}
We first show that the right hand side of the above equation is independent of the choice of $\ii\in\I$. To achieve this, it suffices to show invariance under changing $\ii$ to $\ii'$ by the application of a single braid relation. For such $\ii$ and $\ii'$, we have that $\a_j(\ii)=\a_j(\ii')$ (here we are merely expressing the dependence of $\a_j$ on the choice of $\ii$) unless $\ii_j\neq \ii'_j$ so we have quickly reduced to showing invariance of long word decomposition in the rank 2 case. This is achieved by a simple case by case analysis, which we shall omit here.

Now consider two long word decompositions $\ii=(i_1,\ldots,i_N)$ and $\ii'=(\tau(i_N),\ldots,\tau(i_1))$ where $\tau\map{I}{I}$ is the automorphism defined by $w_0\a_i=-\a_{\tau(i)}$.

This pair of decompositions has the property that for any two $\a,\beta\in\Phi^+$, $\a<_\ii\beta$ if and only if $\beta<_{\ii'}\a$. Thus
\begin{eqnarray*}
1+\sum_{\a<_\ii\beta}\langle\a,\beta^\vee\rangle&=&\frac{1}{2}\left(1+\sum_{\a<_\ii\beta}\langle\a,\beta^\vee\rangle+1+
\sum_{\a<_{\ii'}\beta}\langle\a,\beta^\vee\rangle\right) \\
&=& \frac{1}{2}\sum_{\a\in\Phi^+}\langle\a,\beta^\vee\rangle \\
&=& \langle\rho,\beta^\vee\rangle
\end{eqnarray*} as required.
\end{proof}

Recall that $n$ denotes the degree of the cover $\G\rightarrow G$. For each root $\a$, define the integer $n_\alpha=\frac{n}{(n,l_\a)}$, where $l_\a$ is, as before given by $||\a^\vee||^2$, where the norm on the coroot lattice has been chosen such that the short coroots have length 1.  We also denote $n_k=n_{\a_k}$. The following lemma will allow us to integrate the function $f$ from (\ref{fdefn}) over any set $C_\m^\ii$.

\begin{lemma}\label{lemma4}
\begin{enumerate}
\item For the function $f$ defined in (\ref{fdefn}), $$\int_{C_\m^\ii}f(u) du=0$$ unless $n_k|m_k$ for all $k\in[N]$.

\item If $n_k| m_k$ for all $k\in[N]$ then $f$ is constant on $C_\m^\ii$ and takes the value
$$\prod_{\a\in\Phi^+}(q^{\langle\rho,\a^\vee\rangle}x_\a)^{m_\a}$$
where the variables $x_\a$ are defined by $x_\a=\prod_{i\in I}x_i^{h_i}$ with $\a^\vee=\sum_{i\in I}h_i\a_i^\vee$.
\end{enumerate}
\end{lemma}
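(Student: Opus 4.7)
The plan is to apply Algorithm~\ref{algorithm} to realize each $u \in C_\m^\ii$ via its Iwasawa decomposition, reduce the computation of $f(u)$ to the torus component of $p_1$, and then separate the dependence on the unit parameters into Hilbert-symbol factors whose integral over $O_F^\times$ detects the divisibility condition $n_k \mid m_k$.

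First, for $u \in C_\m^\ii$, the algorithm produces $u = p_1 k'$ with $p_1 \in \B$, $k' \in K$. By Proposition~\ref{diagonalterms} and the identity (\ref{inverseh}) in the form $h_{-\a}(t) = h_\a(t^{-1})$, the torus component of $p_1$ is
$$h = \prod_{i=N}^{1} h_{\a_i}(w_i^{-1}).$$
After moving the positive-unipotent part of $p_1$ through $h$, the function $f$ (which is left-$U$-invariant, right-$K$-invariant, and genuine in the central $\mu_n$) depends on $u$ only through $h$.

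Second, for each $k$ with $m_k > 0$ write $w_k = \p^{-m_k} u_k$ with $u_k \in O_F^\times$. The cocycle relation $h_\a(xy) = (x,y)^{-l_\a} h_\a(x) h_\a(y)$ gives
$$h_{\a_k}(\p^{m_k} u_k^{-1}) = (\p, u_k)^{m_k l_{\a_k}} \, h_{\a_k}(\p^{m_k}) \, h_{\a_k}(u_k^{-1}),$$
in which the compact factor $h_{\a_k}(u_k^{-1}) \in \T \cap K$ is trivialised by $f$. Using (\ref{hhcommute}) to migrate every compact factor past the remaining $h_{\a_i}(\p^{m_i})$ factors (which generates additional Hilbert symbols $(\p, u_k)^{\pm m_i \langle\a_k, \a_i^\vee\rangle l_{\a_k}}$, while swaps between two compact factors are trivial since $(O_F^\times, O_F^\times) = 1$), consolidating composite coroots via $\a_k^\vee = \sum_{j} c_j^{\a_k} \a_j^\vee$, and absorbing the remaining $(\p,\p)$-type scalars, we arrive at
$$f(u) = \zeta(u_1, \ldots, u_N) \cdot \prod_{\a \in \Phi^+} (q^{\langle\rho, \a^\vee\rangle} x_\a)^{m_\a},$$
where $\zeta(u_1, \ldots, u_N) = \prod_k (\p, u_k)^{a_k(\m)} \in \mu_n$ for certain integers $a_k(\m)$ depending only on $\m$ and the root-system data. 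The product on the right arises by applying (\ref{fdefn}) to $\prod_{j \in I} h_{\a_j}(\p^{M_j})$ with $M_j = \sum_\a m_\a c_j^\a$, together with $\langle\rho, \a^\vee\rangle = \sum_j c_j^\a$.

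To finish, integrate over $C_\m^\ii$. The cell is parametrised by the variables $y_k$, and for each $k$ with $m_k > 0$ this includes an integration of $u_k$ over $O_F^\times$. By Proposition~\ref{gauss}(3), $\int_{O_F^\times} (\p, u)^{a} du$ vanishes unless $n \mid a$. Thus the integral over $C_\m^\ii$ vanishes unless $n \mid a_k(\m)$ for every $k$; when this holds, $\zeta \equiv 1$ and $f$ is constant on $C_\m^\ii$ with the asserted value, proving both parts of the lemma. The main obstacle is the combinatorial verification — by careful bookkeeping of bilinear Hilbert-symbol contributions from both the cocycle splits and the commutator reorderings — that $n \mid a_k(\m)$ for all $k$ is equivalent to $n_k \mid m_k$ for all $k$. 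The latter condition is the same as $n \mid m_k l_{\a_k}$ via the relation $n_k = n/(n, l_{\a_k})$, so the diagonal contribution $m_k l_{\a_k}$ yields the forward implication directly; the converse requires that any failure of the divisibility forces a nontrivial $u_k$-dependence in $\zeta$, which is the technical heart of the proof.
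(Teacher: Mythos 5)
Your overall strategy coincides with the paper's: run Algorithm \ref{algorithm} to reduce $f(u)$ to its value on the torus part $\prod_{i=N}^{1}h_{-\a_i}(w_i)$ of $p_1$, split each $w_k=\p^{-m_k}u_k$, commute the unit parts to the right via (\ref{hhcommute}) at the cost of Hilbert symbols $(\p,u_k)^{a_k(\m)}$, and detect the divisibility condition by integrating these characters over $O_F^\times$. The value in part 2 is obtained the same way in both arguments, from (\ref{fdefn}) applied to the resulting normal form $\prod_{i}h_{\a_i}(\p^{m_i})$ times an element of $K$.

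The genuine gap is precisely the step you defer as ``the technical heart'': the equivalence of $n\mid a_k(\m)$ for all $k$ with $n_k\mid m_k$ for all $k$. A single congruence $n\mid a_k(\m)$ says nothing about $m_k$ alone, because $a_k(\m)$ carries cross terms $\sum_{i}\langle\a_i,\a_k^\vee\rangle m_i l_{\a_i}$ in addition to the diagonal term $m_kl_{\a_k}$; your forward implication is indeed immediate (each cross term is a multiple of some $m_il_{\a_i}$, and $n_i\mid m_i$ is equivalent to $n\mid m_il_{\a_i}$), but the converse is not, and it is the converse that part 1 requires. The paper closes this by exploiting the triangularity of the cross terms and inducting on $k$: when $h_{-\a_{k+1}}(w_{k+1})$ is pushed past the already-normalised product $\prod_{i=k}^{1}h_{\a_i}(\p^{m_i})$, the exponent produced is
$$
e=m_{k+1}l_{k+1}+\sum_{i\leq k}\langle\a_i,\a_{k+1}^\vee\rangle m_il_{\a_i},
$$
so the off-diagonal contribution involves only indices $i\leq k$. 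Under the inductive hypothesis that nonvanishing has already forced $n_i\mid m_i$ for $i\leq k$, these cross terms vanish modulo $n$, hence $e\equiv m_{k+1}l_{k+1}\pmod n$, and the vanishing criterion for $\int_{O_F^\times}(\p,u)^e\,du$ (Proposition \ref{gauss}) yields exactly $n_{k+1}\mid m_{k+1}$. Peeling off the variables $y_1,\dots,y_N$ one at a time in this order is the missing device; once it is in place, both implications fall out simultaneously and your argument is complete.
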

\begin{proof}
By induction on $k$, we shall prove that
$$
\int_{C_\m^\ii}f(u)du\neq 0\implies n_i|m_i \text{ for all } i\leq k
$$
and in this case
$$f(\prod_{i=N}^1 h_{-\a_i}(w_i))=f\left(\prod_{i=N}^{k+1} h_{-\a_i}(w_i)\prod_{i=k}^1 h_{\a_i}(\p^{m_i})\right).$$

Note that the lemma follows from the $k=N$ case of the inductive statement.

Let $w_i=\p^{-m_i}u_i$. Then $u_i\in O_F$. We may write,
assuming our inductive hypothesis for $k$ and using (\ref{hhcommute}),
\[
h_{-\a_{k+1}}(w_{k+1})\prod_{i=k}^1 h_{\a_i}(\p^{m_i})=(\p,u_{k+1})^e\left( \prod_{i=k+1}^1 h_{\a_i}(\p^{m_i})\right) h_{-\a_{k+1}}(u_{k+1}),
\]
where the exponent $e$ is given by
\begin{equation}\label{exponent}
e=m_{k+1}l_{k+1}+\sum_{i\leq k} \langle\a_i,\a_{k+1}^\vee\rangle m_i l_{\a_i}.
\end{equation}

Consider the evaluation of the integral of $f$ over $C^\ii_\m$, by integrating over variables $y_1,\ldots,y_n$ in that order.
Suppose also that $m_{k+1}>0$. Then the above calculation shows that we get a factor of
$$ \int_{O_F^\times}(\p,u)^e du$$ appearing in our integral, from the integration over $y_{k+1}$. For this integral to be non-zero, we must thus have that $n|e$. By our inductive hypothesis we have that $e\equiv m_{k+1}l_{k+1}\pmod n$, so we obtain the first part of the inductive hypothesis, namely that $n_{k+1}$ divides $m_{k+1}$. The second part of the inductive hypothesis now follows from the form taken by $f$ in (\ref{fdefn}), noting that $e\equiv 0\pmod n$ and $h_{-\a_{k+1}}(u_{k+1})\in K$, thus proving our lemma.
\end{proof}

At this point it is straightforward to complete the evaluation of the integral of $f$ over $U^-$. In order for this integral to be convergent, we must assume that $|x_i|<1$ for all $i\in I$. However some sense can be made of this integral for an arbitrary collection of complex numbers $x_i$ by considering the integral as the analytic continuation of an intertwining operator, as in \cite{kp},\cite{mcn}. We obtain the following result, where we assume $|x_\a|<1$ for all $\a$ in order to ensure convergence.

\begin{theorem}\label{gk}
[Gindikin-Karpelevich Formula] %spelling from ben, alternative is to replace h with an accented letter
For the function $f$ defined in (\ref{fdefn}), and the variables $x_\a$ as defined in Lemma \ref{lemma4}, we have \[
\int_{U^-}f(u) du=\prod_{\a\in\Phi^+} \left( \frac{1-q^{-1}x_\a^{n_\a}}{1-x_\a^{n_\a}} \right)
\]
\end{theorem}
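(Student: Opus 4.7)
The plan is to decompose $U^-$ using the cells $C_\m^\ii$ (for any fixed $\ii \in \I$) and evaluate the integral termwise using the two results just established. The key observation is that the value of $f$ on each cell (Lemma \ref{lemma4}) and its volume (from the preceding proposition) are both explicit products indexed by $i \in [N]$, so the full sum factors as a product of geometric series.

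Starting from
$$\int_{U^-} f(u)\,du = \sum_{\m \in \N^N} \int_{C_\m^\ii} f(u)\,du,$$
part (1) of Lemma \ref{lemma4} kills every summand unless $n_k \mid m_k$ for each $k \in [N]$, while part (2) makes $f$ constant on each surviving cell, so that the integral over such a cell equals the constant value times the volume. Multiplying the two formulas, the $q^{\pm\langle\rho,\a_i^\vee\rangle m_i}$ factors cancel, leaving
$$\int_{C_\m^\ii} f(u)\,du = \prod_{i=1}^N x_{\a_i}^{m_i}\Bigl(1 - \tfrac{1 - \delta_{m_i,0}}{q}\Bigr)$$
(where $\delta_{m_i,0}$ is the Kronecker delta). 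Since this factors over $i$ and the divisibility $n_i \mid m_i$ is a pointwise condition, the full sum splits as
$$\int_{U^-} f(u)\,du = \prod_{i=1}^N\Bigl(\sum_{\substack{m \geq 0\\ n_i \mid m}} x_{\a_i}^m\bigl(1-\tfrac{1-\delta_{m,0}}{q}\bigr)\Bigr).$$

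Each factor is a routine geometric series: the $m=0$ term is $1$, and for $m>0$ with $n_i\mid m$ we sum $(1-1/q)\,x_{\a_i}^{kn_i}$ over $k\geq 1$, giving
$$1 + \bigl(1-\tfrac{1}{q}\bigr)\frac{x_{\a_i}^{n_i}}{1-x_{\a_i}^{n_i}} = \frac{1 - q^{-1} x_{\a_i}^{n_i}}{1 - x_{\a_i}^{n_i}}.$$
Finally, the bijection $i \mapsto \a_i$ between $[N]$ and $\Phi^+$ turns the product over $i$ into the product over $\Phi^+$, yielding the claimed formula. Convergence under $|x_\a|<1$ justifies the interchange of sum and integral, and the identity then extends to arbitrary parameters by analytic continuation, as noted.

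I do not anticipate a major obstacle: the two preceding lemmas do the substantive work, and what remains is bookkeeping. The only mild point worth flagging is that although every step above uses the ordering $\ii$, the final answer is manifestly $\ii$-independent; this follows automatically from the cancellation of the $q$-exponents and the fact that $\{\a_i\}$ runs over all of $\Phi^+$ regardless of which reduced word we chose.
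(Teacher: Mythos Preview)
Your proof is correct and follows essentially the same route as the paper: decompose $U^-$ into the cells $C_\m^\ii$, use Lemma \ref{lemma4} to restrict to $n_k\mid m_k$ and evaluate $f$ on each surviving cell, multiply by the volume so that the $q^{\pm\langle\rho,\a_i^\vee\rangle m_i}$ factors cancel, and then sum the resulting geometric series. The paper's presentation is terser, but the substance is identical.
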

\begin{proof}
The idea is to simply write the integral as a sum over each $C_\m^\ii$, and use what we have already computed.
\begin{eqnarray*}
\int_{U^-}f(u)du&=&\sum_{\m\in\N^N}\int_{C_\m^\ii} f(u)du
\ =\,\sum_{\substack {\m\in\N^N \\ n_j|m_j\forall j}}\int_{C_\m^\ii} f(u)du \\
&=&\sum_{\substack {\m\in\N^N \\ n_j|m_j\forall j}}\prod_{\a\in\Phi^+} (q^{-\langle\rho,\a^\vee\rangle}x_\a)^{m_\a} \prod_{i=1}^N q^{\langle\rho,\a_i^\vee\rangle m_i}(1-\frac{1-\delta_{m_i,0}}{q}) \\
&=& \prod_{\a\in\Phi^+}\sum_{k=1}^\infty x_\a^{kn_\a}(1-\frac{1-\delta_{k,0}}{q}) \\
&=& \prod_{\a\in\Phi^+} \left( \frac{1-q^{-1}x_\a^{n_\a}}{1-x_\a^{n_\a}} \right)
\end{eqnarray*} as required.\end{proof}

This same method can be used to calculate the integral of $f$ over other unipotent subgroups of $U^-$. For each $w\in W$ let $\Phi_w=\{\a\in\Phi^+\mid w\a\in \Phi^-\}$ and $U_w^-$ be the corresponding unipotent subgroup. Then with exactly the same proof, we have

\begin{theorem}\label{gkw}
For the function $f$ defined in (\ref{fdefn}), and the variables $x_\a$ as defined in Lemma \ref{lemma4}, we have \[
\int_{U_w^-}f(u) du=\prod_{\a\in\Phi_w} \left( \frac{1-q^{-1}x_\a^{n_\a}}{1-x_\a^{n_\a}} \right)
\]
\end{theorem}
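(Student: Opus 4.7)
The plan is to mirror the proof of Theorem~\ref{gk}, after a careful choice of reduced decomposition $\ii\in\I$ tailored to $w$. Set $k=\ell(w)$ and take $\ii$ to be the concatenation of a reduced word for $w_0 w^{-1} w_0$ with a reduced word for $w_0 w$. This is a reduced expression for $w_0$: the product simplifies via $w_0 w^{-1}\cdot w = w_0$, and the lengths add since conjugation by $w_0$ preserves length ($\ell(w_0 w^{-1} w_0)=\ell(w^{-1})=k$) while $\ell(w_0 w)=N-k$. Applying Proposition~\ref{ellw} to the last $N-k$ letters, the terminal segment of $<_\ii$ is $\Phi_{w_0 w}$; since $\alpha\in\Phi_{w_0 w}$ is equivalent to $w\alpha\in\Phi^+$, this equals $\Phi^+\setminus\Phi_w$, so the initial segment is exactly $\Phi_w=\{\alpha_1,\ldots,\alpha_k\}$. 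Every $u\in U_w^-$ can then be written as $u=\prod_{j=1}^k e_{-\alpha_j}(x_j)$, i.e.\ the expansion of Algorithm~\ref{algorithm} has $x_j=0$ for all $j>k$.

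Now apply Algorithm~\ref{algorithm} to such $u$. A downward induction on $k'$ shows that the steps $k'=N,N-1,\ldots,k+1$ are completely trivial: from $p_{k'+1}=1$ and $x_{k'}=0$ one gets $p'_{k'}=1$, $y_{k'}=0$, $m_{k'}=0$, and hence $p_{k'}=1$. The remaining steps $k'=k,\ldots,1$ run as a ``smaller'' version of the same algorithm on the initial segment and produce cells $C_\m^{\ii,w}$ indexed by $\m=(m_1,\ldots,m_k)\in\N^k$ which partition $U_w^-$. The analogues of Lemma~\ref{lemma4} and Proposition~5.3 then apply verbatim: $f$ takes the constant value $\prod_{\alpha\in\Phi_w}(q^{-\langle\rho,\alpha^\vee\rangle}x_\alpha)^{m_\alpha}$ on each cell (the integral vanishing unless $n_j\mid m_j$ for all $j\in[1,k]$), and the cell volume equals $\prod_{j=1}^k q^{\langle\rho,\alpha_j^\vee\rangle m_j}\bigl(1-\tfrac{1-\delta_{m_j,0}}{q}\bigr)$. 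Crucially, the identity $\langle\rho,\alpha_j^\vee\rangle = 1 + \sum_{\alpha<_\ii\alpha_j}\langle\alpha,\alpha_j^\vee\rangle$ is unaffected because for $\alpha_j\in\Phi_w$ every $\alpha<_\ii\alpha_j$ lies automatically in $\Phi_w$ (it is in the initial segment). Summing over $\m$ and telescoping as in Theorem~\ref{gk} delivers the desired product.

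The main obstacle is the preparatory choice of $\ii$ so that $\Phi_w$ is the initial rather than the terminal segment of $<_\ii$. The more obvious choice --- ending $\ii$ with a reduced word for $w$ --- would make $\Phi_w$ the terminal segment, putting the trivial steps at the \emph{beginning} of the algorithm's processing order (which runs from $k'=N$ downward). The subsequent non-trivial steps would then generate commutator terms $e_{-\alpha_j}(\cdot)$ with $j\le N-k$, producing spurious non-zero $m_j$ outside $\Phi_w$ and obstructing the clean telescoping. Placing $\Phi_w$ at the start of the order is precisely what makes the trivial steps inert.
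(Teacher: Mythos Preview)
Your argument is correct and is precisely the approach the paper has in mind when it says ``with exactly the same proof.'' You have usefully supplied the one detail the paper suppresses: the choice of $\ii$ so that $\Phi_w$ is the \emph{initial} segment of the order $<_\ii$, which is what makes the steps $k'>k$ of Algorithm~\ref{algorithm} inert and lets the remaining steps reproduce the calculation of Theorem~\ref{gk} on the nose. Your verification that concatenating reduced words for $w_0w^{-1}w_0$ and $w_0w$ yields such an $\ii$ is correct, as is your explanation of why the more naive choice (ending $\ii$ with a reduced word for $w$) would spoil the argument.

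One small correction: the result you cite as ``Proposition~5.3'' (the cell volume formula) is actually the first proposition of Section~\ref{initial}; in the paper's numbering Proposition~5.3 is the vanishing of $I_\lambda$ for non-dominant $\lambda$, which is unrelated. You should cite the volume proposition directly.
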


From a representation-theoretic point of view, this integral corresponds to the evaluation of the intertwining operator $T_w\in\Hom(V_\chi,V_\chi^w)$ at a spherical vector.

\section{Connection to the combinatorics of crystals}\label{xtal}

In this section we shall give a bijection between the sets $C_\m^\ii$ that decompose $U^-$ and Lusztig's canonical basis. We shall be considering the case $F=k((\p))$ for $k$ an algebraically closed field, since this will make our exposition easier. Under this assumption, our sets $C_\m^\ii$ are naturally affine varieties over $k$.

Although such a choice of $F$ is not a local field, we can obtain a direct link between the case of a positive characteristic local field, which is necessarily (non-canonically) isomorphic to $\F_q((\p))$. The sets $C_\m^\ii$ for the field $\F_q((\p))$ are the $\F_q$ points of the corresponding varieties over $\overline{\F_q}$ that are considered in this section. To establish a link in the case for an arbitrary discrete valuation field $F$ with the techniques of this section, we note that a formal analogy can be made as the combinatorics of taking valuations is independent of the field $F$, and is an analogue of the process of tropicalisation.

 Lusztig \cite{lusztig} realised the canonical basis as the set of connected components of a graph structure on $\I\times\N^N$, which we shall now recall. In order for there to exist an edge between two vertices $(\ii,\m)$ and $(\ii',\m')$ it is first necessary to have $\ii$ and $\ii'$ related by a single application of a braid relation, so an occurrence of $(i,j,\ldots)$ is replaced by $(j,i,\ldots)$ when moving from $\ii$ to $\ii'$. Furthermore, we require that $\m$ and $\m'$ are related by piecewise linear transition maps $R_\ii^{\ii'}$, which we shall now explicitly describe. These equations appear in \cite{lusztig,kamnitzer} except for the case of type $G_2$, where they are derived from Section 7 of \cite{genminors}.

The piecewise linear transition maps $R_\ii^{\ii'}$ only affect the coordinates which are changed in moving from $\ii$ to $\ii'$. The restriction to the set of such coordinates is a local transtition map $R_{ij\ldots}^{ji\ldots}$, which we shall now give. There are four cases to consider, corresponding to each of the four finite type rank 2 root systems.

\begin{enumerate}
\item Type $A_1\times A_1$: If $a_{ij}=a_{ji}=0$, then
$$
R_{ij}^{ji}(a,b)=(b,a).
$$

\item Type $A_2$: If $a_{ij}=a_{ji}=-1$, then
$$
R_{iji}^{jij}(a,b,c)=(b+c-\min(a,c),\min(a,c),b+a-\min(a,c)).
$$

\item Type $B_2$: If $a_{ij}=-2$ and $a_{ji}=-1$, then
$$
R_{ijij}^{jiji}(a,b,c,d)=(b+2c+d-q,q-p,2p-q,a+b+c-p)
$$
where $p=\min(a+b,a+d,c+d)$ and $q=\min(2a+b,2a+d,2c+d)$.

\item Type $G_2$: If $a_{ij}=-3$ and $a_{ji}=-1$ then
\begin{multline*}
R_{ijijij}^{jijiji}(a,b,c,d,e,f)=(b+3c+2d+3c+f-r,r-q,2q-r-s,s-p-q, \\ 3p-s,a+b+2c+d+e-p)
\end{multline*}
where $p=\min(a+b+2c+d,a+b+2c+f,a+b+2e+f,a+d+2e+f,b+d+2e+f)$,

$q=\min(2a+2b+3c+d,2a+2b+3c+f,2a+2b+3e+f,2a+2d+3e+f,2c+2d+3e+f,a+b+d+2e+f+\min(a+c,2c,c+e,a+e)),$

$r=\min(3a+2b+3c+d,3a+2b+3c+f,3a+2b+3e+f,3a+2d+3e+f,3c+2d+3e+f,2a+b+d+2e+f+\min(a+c,2c,c+e,a+e)),$ and

$s=\min(2a+2b+2c+d+\min(a+b+3c+d,a+b+3c+f,a+b+3e+f,d+2e+f+\min(a+c,2c,c+e,a+e))+2f+3\min(a+b+2c,a+b+2e,a+d+2e,c+d+2e)).$
\end{enumerate}
We shall refer to this graph as the Lusztig graph.

The set of connected components of the Lusztig graph is canonically identified with the canonical basis $B(-\infty)$ for $U_q(\mathfrak{n}^+)$, the positive part of the quantised universal enveloping algebra, as defined for example in \cite{kashiwarasurvey}. Moreover, the following theorem from \cite[Ch 42]{lusztigbook} yields a parametrisation of the canonical basis for each choice of $\ii\in \I$.

\begin{theorem}[\cite{lusztigbook}, Ch 42\label{cellsascrystal}]
For each $\ii\in\I$, the map $\N^N\rightarrow\I\times\N^N$ sending $\m$ to $(\ii,\m)$ is a bijection between $\N^N$ and the set of connected components of the Lusztig graph. \end{theorem}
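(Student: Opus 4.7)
The plan is to prove that for each fixed $\ii \in \I$, the map $\N^N \to \{\text{components of the Lusztig graph}\}$ sending $\m$ to the connected component of $(\ii,\m)$ is both surjective and injective.

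For surjectivity, I would invoke the Matsumoto--Tits theorem: any two reduced expressions of $w_0$ are related by a finite sequence of braid moves. Thus given any vertex $(\ii',\m') \in \I \times \N^N$, successively applying the piecewise-linear transition maps along a sequence of braid moves carrying $\ii'$ to $\ii$ produces a vertex $(\ii,\m)$ in the same connected component as $(\ii',\m')$, so every component meets $\{\ii\}\times \N^N$.

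For injectivity, suppose $(\ii,\m)$ and $(\ii,\m')$ lie in the same connected component. Then there is a closed walk $\ii = \ii_0 \to \ii_1 \to \cdots \to \ii_r = \ii$ in the braid graph whose composite transition map sends $\m$ to $\m'$, and one must show this composite is the identity. The route I would take is to interpret the maps $R_\ii^{\ii'}$ as tropicalisations of the change-of-basis formulas between Lusztig's PBW bases $\{E_\ii(\m)\}_{\m \in \N^N}$ of $U_q(\mathfrak{n}^+)$. Since PBW transition matrices along a closed sequence of braid moves must compose to the identity (the initial and final bases are literally the same), the tropical limit of this composition is the identity map on $\N^N$, forcing $\m = \m'$.

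The main obstacle is making the tropicalisation step rigorous: one must verify that the explicit piecewise-linear formulas displayed in the four rank-two cases really do arise as tropicalisations of the underlying Lusztig--Kashiwara change-of-basis maps (the geometric crystals and generalised minors of Berenstein--Fomin--Zelevinsky in \cite{genminors} provide the appropriate framework, particularly for the delicate type $G_2$ case), and that tropicalisation is compatible with composition along a braid-move sequence. Once these coherences are established, the bijection follows; the full verification is carried out in \cite[Ch.~42]{lusztigbook}, to which we defer for the technical details.
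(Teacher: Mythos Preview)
The paper does not give its own proof of this statement: it is quoted directly from \cite[Ch.~42]{lusztigbook} and stated without argument, serving as background for the subsequent Theorem~\ref{maincrystalthm}. So there is no proof in the paper to compare against.

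Your sketch is a reasonable outline of the ideas. Surjectivity via Matsumoto--Tits is exactly right. For injectivity, your tropicalisation argument is in the correct spirit but the step ``the tropical limit of the composition is the identity'' needs care: tropicalisation does not in general commute with composition of rational maps unless the maps are given by subtraction-free rational expressions, so that minima of valuations are achieved on single monomials. This positivity is precisely what \cite{genminors} supplies via generalised minors and the geometric crystal framework, and is what makes the reduction to rank two legitimate. Lusztig's own argument in \cite[Ch.~42]{lusztigbook} is organised somewhat differently: he shows directly that for each $\ii$ the PBW basis $\{E_\ii(\m)\}$ and the canonical basis are related by a unitriangular matrix (in a suitable ordering), so each canonical basis element has a well-defined leading PBW monomial, and the map $b\mapsto\m$ recording this leading exponent is a bijection $B(-\infty)\to\N^N$ for every $\ii$. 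The piecewise-linear transition maps then record how this leading exponent changes under a braid move, and injectivity on $\{\ii\}\times\N^N$ follows because both $(\ii,\m)$ and $(\ii,\m')$ label the same canonical basis element. This route avoids having to check coherence of tropicalisation along arbitrary closed braid walks.
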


We consider $U^-$ as an ind-variety over $k$, and thus are able to give it the Zariski topology. The cells $C^\ii_\m$ now have the additional structure as subvarieties of $U^-$ and we denote their Zariski closure in $U^-$ by $\overline{C^\ii_\m}$. We are able to relate our decomposition of $U^-$ with the canonical basis via the following theorem.

\begin{theorem}[Theorem 15, \cite{baumanngaussent}, Theorem 4.5, \cite{kamnitzer}]\label{maincrystalthm}
For $(\ii,\m),(\ii',\m')\in\I\times\N^N$, the vertices $(\ii,\m)$ and $(\ii',\m')$ of the Lusztig graph are connected if and only if $\overline{C^\ii_\m}=\overline{C^{\ii'}_{\m'}}$.
\end{theorem}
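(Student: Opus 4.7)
The plan is to prove both implications by reducing, via Matsumoto's theorem, to the case where $\ii$ and $\ii'$ differ by a single braid relation. For the forward direction, when $\ii'$ is obtained from $\ii$ by a braid move affecting positions $[s,s+r]$, the output $p_{s+r+1}$ of Algorithm \ref{algorithm} is already determined by the input coordinates $x_j$ and the letters $i_j$ for $j > s+r$, and these are common to both reduced words. Consequently the parameters $m_k$ for $k > s+r$ agree between the two cell parametrisations, and the problem reduces to comparing the induced piecewise-linear transformation on $(m_s,\ldots,m_{s+r})$ with Lusztig's map $R_{ij\ldots}^{ji\ldots}$. Since this rank-$2$ behaviour only involves the two simple reflections in the braid relation, it can be tested inside the corresponding rank-$2$ Levi subgroup.

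In this rank-$2$ setting, I would run Algorithm \ref{algorithm} for each of the two reduced decompositions of the long element in each of the types $A_1\times A_1$, $A_2$, $B_2$, $G_2$. Using the commutator relations from Section \ref{1}, in particular Steinberg's relation (\ref{eecommute}) together with (\ref{ehcommute}), one tracks how the valuation of each successive $w_k$ depends on those of the inputs $x_k$. The result is a piecewise-linear formula in $(m_s,\ldots,m_{s+r})$ which one matches case by case with the explicit formulae listed before the statement. Once this identification is complete, a single braid move preserves the closure of each cell, and by transitivity along any path in the Lusztig graph the closures of the corresponding endpoint cells coincide.

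For the converse, suppose $\overline{C^\ii_\m} = \overline{C^{\ii'}_{\m'}}$. Choose any sequence of braid moves taking $\ii$ to $\ii'$, let $\m''$ be the image of $\m$ under the corresponding composition of Lusztig transition maps, and note that $(\ii,\m)$ and $(\ii',\m'')$ are then connected in the Lusztig graph. By the forward direction, $\overline{C^\ii_\m} = \overline{C^{\ii'}_{\m''}}$, so $\overline{C^{\ii'}_{\m''}} = \overline{C^{\ii'}_{\m'}}$. Because the family $\{C^{\ii'}_\nu\}_{\nu\in\N^N}$ stratifies $U^-$ by nonempty locally closed subvarieties, each of them open in its own closure, distinct $\nu$ yield distinct closures, forcing $\m'' = \m'$ and placing $(\ii',\m')$ in the same connected component as $(\ii,\m)$.

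The principal obstacle is the rank-$2$ computation in type $G_2$, where the transition map involves four levels of nested minima with up to eleven terms. Running Algorithm \ref{algorithm} through a reduced word such as $ijijij$ requires careful bookkeeping of repeated applications of (\ref{eecommute}) and of the sign and scaling contributions introduced when a diagonal factor $h_{-\a}(w)$ is commuted past a root subgroup via (\ref{ehcommute}). Verifying that the resulting tropicalised formula matches Lusztig's $R_{ijijij}^{jijiji}$ on every linear region is the most delicate part of the argument, and is precisely the step where one would defer to the detailed computations of Kamnitzer and of Baumann and Gaussent cited in the statement.
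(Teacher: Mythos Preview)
Your approach is essentially the same as the paper's: reduce via Matsumoto's theorem to a single braid move, hence to a rank-two computation, and verify there by explicit calculation that the induced transformation on valuations is Lusztig's transition map; the paper invokes the Berenstein--Zelevinsky coordinate-change machinery of \cite{genminors} for this reduction rather than arguing directly from Algorithm~\ref{algorithm} as you do, and (like you) defers the rank-two check to the cited sources. One point you leave implicit deserves care: you show that $m_k$ for $k>s+r$ agree because $p_{s+r+1}$ depends only on the $x_j$ and $i_j$ with $j>s+r$ (which are indeed common to both words, since the tails of the two convex orderings coincide), but you do not address why $m_k$ for $k<s$ also agree---this requires knowing that after the rank-two segment the algorithm resumes from the same state $p_s$ for both words, and it is precisely here that the paper's appeal to \cite{genminors} (or equivalently the uniqueness of the Iwasawa factor of $u_{\mathrm{mid}}\,u_{\mathrm{right}}$ together with the form of $G_s$) is doing work that your sketch omits. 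Your converse argument via the disjointness and irreducibility of the cells $C^{\ii'}_\nu$ is correct and more explicit than what the paper records.
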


The proof of this theorem utilises the circle of ideas presented in \cite{genminors}, which tells one how the coordinates on $U^-$ change when one changes the reduced word. This reduces the problem to the rank two case where it is dealt with by an explicit computation.% we shall proceed in a more conceptual manner, utilising the realisation of a crystal graph as a set of Mirkovic-Vilonen cycles in the affine Grassmannian, since this fits in naturally with the form of our cells $C^\ii_\m$. The proof relies in the main on the circle of ideas presented in \cite{baumanngaussent}.  Kamnitzer tells me this is wrong.

\begin{corollary}
Consider the collection of closed subsets of $U^-$ consisting of the set of all $\overline{C^\ii_\m}$ for a fixed $\ii$ with $\m$ running over $\N^N$. Then this collection is independent of the choice of $\ii$.
\end{corollary}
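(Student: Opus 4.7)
The plan is to deduce the corollary directly from the two preceding theorems, with essentially no additional work. The key observation is that Theorem \ref{maincrystalthm} lets us translate equalities of Zariski closures into statements about connectivity in the Lusztig graph, and Theorem \ref{cellsascrystal} tells us that the components of the Lusztig graph are parametrised the same way (namely by $\N^N$) regardless of which reduced word $\ii$ is used.

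More concretely, fix $\ii,\ii'\in\I$. I would argue as follows. Given $\m\in\N^N$, Theorem \ref{cellsascrystal} applied to $\ii'$ says that the vertex $(\ii,\m)$ lies in exactly one connected component of the Lusztig graph, and this component contains a unique vertex of the form $(\ii',\m')$ for some $\m'\in\N^N$. Theorem \ref{maincrystalthm} then gives $\overline{C^\ii_\m}=\overline{C^{\ii'}_{\m'}}$, so every set in the $\ii$-collection appears in the $\ii'$-collection. By symmetry the two collections coincide, which is the content of the corollary. One may package this by defining a map $\sigma_\ii^{\ii'}\colon\N^N\to\N^N$ sending $\m$ to the unique $\m'$ with $(\ii,\m)$ and $(\ii',\m')$ in the same component (this is the composition of the piecewise linear transition maps $R$ along any sequence of braid moves connecting $\ii$ and $\ii'$); the corollary then says $\overline{C^\ii_\m}=\overline{C^{\ii'}_{\sigma_\ii^{\ii'}(\m)}}$.

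There is no real obstacle here, since the substantive content has already been absorbed into Theorems \ref{cellsascrystal} and \ref{maincrystalthm}. The only thing to verify carefully is that the bijection in Theorem \ref{cellsascrystal} is being used correctly for both $\ii$ and $\ii'$ simultaneously, i.e., that every connected component contains exactly one vertex with first coordinate $\ii$ and exactly one vertex with first coordinate $\ii'$. This is immediate from the statement of that theorem. So the proof is a two-line formal consequence of the cited results.
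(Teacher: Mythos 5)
Your argument is correct and is exactly the intended deduction: the paper offers no separate proof for this corollary precisely because it follows formally from Theorem \ref{cellsascrystal} (each connected component of the Lusztig graph contains exactly one vertex with prescribed first coordinate) combined with Theorem \ref{maincrystalthm} (connectivity is equivalent to equality of closures). Your packaging via the transition maps $R_\ii^{\ii'}$ is a correct and harmless elaboration.
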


To proceed, we now recall some of the geometry of the affine Grassmannian for the adjoint group $G^\text{ad}$. This affine Grassmannian shall be denoted $\gr$, it is defined to be the fpqc quotient $G^{\rm{ad}}(k((\p)))/G^{\rm{ad}}(k[[\p]])$. However, it will suffice for our purposes to consider $\gr$ as an ind-variety over $k$. Recall that $\La$ is the cocharacter group of the maximal torus $T^\text{ad}$ of $G^\text{ad}$. The element $\p$ is a uniformiser, accordingly we use $\p^\la$ for $\la\in \La$ to denote the image of $\p$ in $T^\text{ad}$ under the cocharacter $\la$.

 The cells $C^\ii_\m$ that provided the decomposition of $U^-(F)$ are each (non-Noetherian) affine varieties over $k$. This is because each $C^\ii_\m$ is isomorphic to a product of copies of $k[[\p]]$ and $k[[\p]]^\times$, each of which are irreducible varieties. In particular, we see that $C^\ii_\m$ is irreducible.

We first consider the action of the group $G(O_F)$ on $\gr$ by left multiplication. The orbits are indexed by dominant weights $\la\in \La$. More precisely they are given by $\gr_\l=G(O_F)\p^\la$ according to the Cartan decomposition. The orbit $\gr_\la$ is finite dimensional and its closure $\overline\gr_\la$ is a (generally singular) projective variety.

We next consider the action of the unipotent groups $U^\pm(F)$ on $\gr$, again by left multiplication. The orbits here are given by the Iwasawa decomposition, indexed by elements of $\La$. For $\mu\in \La$, we denote by $S^{\pm}_\mu$ the orbit $U^\pm(F)\p^\mu$. These strata are locally closed, but not finite dimensional.

Fix a dominant weight $\la$. Then there is a set of Mirkovic-Vilonen cycles (hereafter MV cycles) $\mathcal{L}(\la)$, defined to be the irreducible components of $\overline{S^+_\mu\cap\gr_\la}$, where $\mu$ can be any weight. The only weights for which this intersection is non-empty are those for which $\mu$ belongs to the convex hull of the set  $\{w\la\mid w\in W\}$, and such that $\la-\mu$ is an integral sum of coroots. As a consequence, we deduce that $\mathcal{L}(\la)$ is a finite set.

There is also a notion of a stable MV cycle which we now define. Let $\mathcal{L}$ denote the set of all irreducible components of $\overline{S_\la^+\cap S_\mu^-}$, where $\la$ and $\mu$ run over $X$ with $\la\geq \mu$. This set $\mathcal{L}$ is equipped with an action of $\La$, given by $\nu Z=t^{\nu}Z$. We define the set of stable MV cycles to be the quotient of $\mathcal{L}$ by this action, and denote it by $\mathcal{L}(-\infty)$. We shall frequently identify $\mathcal{L}(-\infty)$ with a particular choice of coset representative in $\mathcal{L}$, most often that which is an irreducible component of $\overline{S_\la^+\cap S_0^-}$.

The following proposition of Anderson relates the notion of a MV cycle to that of a stable MV cycle.

%anderson uses dominant notation as opposed to BG08's antidominant
\begin{proposition}[Proposition 3, \cite{anderson}]\label{anderson}
Suppose $\la$ is antidominant and $\nu$ is arbitrary. Then the irreducible components of $\overline{S_\nu^+\cap \gr_\la}$ are the same as the irreducible components of $\overline{S_\nu^+\cap S_\la^-}$ contained in $\overline{\gr_\la}$.
%$$\mathcal{L}_\nu=\{ Z\in \mathcal{L} | \mu_+(Z)=\nu,\mu_-(Z)=\la\text{ and }Z\subset \overline{\gr_\la} \}$$
\end{proposition}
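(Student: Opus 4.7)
The plan is to reduce the comparison of irreducible components to a single open subvariety common to both intersections, namely $S_\nu^+\cap\gr_\la\cap S_\la^-$. The key geometric input is that, for $\la$ antidominant, $\gr_\la\cap S_\la^-$ is open dense in $\gr_\la$. Writing $\la_+=w_0\la$ for the dominant representative of $\la$ in its Weyl orbit, one has $\gr_\la=K\p^{\la_+}$ since $W\subset K$, and the Mirkovic--Vilonen dimension formula yields
$$\dim(\gr_\la\cap S_\la^-)=\langle\rho,\la_+-\la\rangle=-2\langle\rho,\la\rangle=\dim\gr_\la.$$
Together with irreducibility of this slice, the dimension equality forces it to be open dense in $\gr_\la$.

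Complementarily, I would verify that no lower $K$-orbit $\gr_\mu\subset\overline{\gr_\la}$ with $\mu$ dominant and $\mu\lneq\la_+$ meets $S_\la^-$: such a nonempty intersection would require $\la\in\mathrm{conv}(W\mu)$, which fails by minimality of the antidominant weight $\la$ in $W\la_+$ combined with $\mu\lneq\la_+$. Hence $S_\la^-\cap\overline{\gr_\la}=S_\la^-\cap\gr_\la$, and intersecting with $S_\nu^+$ identifies
$$S_\nu^+\cap\gr_\la\cap S_\la^-=S_\nu^+\cap S_\la^-\cap\overline{\gr_\la}.$$
By the first paragraph this common set is open in $S_\nu^+\cap\gr_\la$.

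The proposition now reduces to a pair of density statements: that this common open set is dense in $S_\nu^+\cap\gr_\la$, and that every irreducible component of $\overline{S_\nu^+\cap S_\la^-}$ contained in $\overline{\gr_\la}$ meets it in an open dense subset. The former is the main obstacle, and I would dispose of it via Mirkovic--Vilonen equidimensionality: every irreducible component of $S_\nu^+\cap\gr_\la$ has dimension $\langle\rho,\la_++\nu\rangle$, whereas a hypothetical component avoiding $S_\la^-$ would lie entirely in $S_\nu^+\cap\gr_\la\cap S_\mu^-$ for some $\mu\gneq\la$, a subvariety of strictly smaller dimension $\langle\rho,\nu-\mu\rangle$, a contradiction. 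The latter density is then automatic, since components of $\overline{S_\nu^+\cap S_\la^-}$ share the same expected dimension $\langle\rho,\nu-\la\rangle=\langle\rho,\la_++\nu\rangle$.

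Having secured these density statements, taking closures produces mutually compatible bijections between the irreducible components of $\overline{S_\nu^+\cap\gr_\la}$, of the common open set $S_\nu^+\cap\gr_\la\cap S_\la^-$, and of those irreducible components of $\overline{S_\nu^+\cap S_\la^-}$ contained in $\overline{\gr_\la}$, which is exactly the assertion of the proposition.
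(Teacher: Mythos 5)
The paper does not prove this statement; it is imported verbatim as Proposition 3 of Anderson's paper \cite{anderson}, so there is no internal argument to compare against. Your reconstruction is essentially correct and follows the natural line: (i) for antidominant $\la$ the slice $S_\la^-\cap\gr_\la$ is the open dense cell of $\gr_\la$ (irreducibility plus the dimension count $\langle\rho,\la_+-\la\rangle=2\langle\rho,\la_+\rangle=\dim\gr_\la$, or directly the big-cell description); (ii) $S_\la^-$ misses every lower stratum of $\overline{\gr_\la}$ because $\la$ is an extreme weight, giving the common locally closed set $C=S_\nu^+\cap\gr_\la\cap S_\la^-=S_\nu^+\cap S_\la^-\cap\overline{\gr_\la}$; (iii) Mirkovi\'c--Vilonen equidimensionality of $S_\nu^+\cap\overline{\gr_{\la_+}}$ rules out a component of $S_\nu^+\cap\gr_\la$ avoiding $S_\la^-$, since its generic $U^-$-stratum $S_\mu^-$ would have $\mu\gneq\la$ and hence strictly smaller dimension bound $\langle\rho,\nu-\mu\rangle$. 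Two points deserve slightly more care in the write-up. First, a component avoiding $S_\la^-$ lies in the \emph{union} of the strata $S_\mu^-$ with $\mu\gneq\la$, not in a single one; you should either pass to the stratum meeting it densely or bound the dimension of the finite union by the maximum of $\langle\rho,\nu-\mu\rangle$. Second, in the final step the direction that actually uses the dimension count is showing that the closure of a component of $C$ is a whole component of $\overline{S_\nu^+\cap S_\la^-}$ (a component $Y$ of $C$ sits inside a component $Z_0$ of $S_\nu^+\cap S_\la^-$ with $\dim Z_0\le\langle\rho,\nu-\la\rangle=\dim Y$, forcing $\overline Y=\overline{Z_0}$); the reverse inclusion is automatic from (ii). With these small repairs the argument is complete.
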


In particular we are able to identify $\mathcal{L}(\la)$ as a subset of $\mathcal{L}(-\infty)$.

 Under the gometric Satake correspondence, the intersection cohomology complex of $\overline\gr_\l$ is mapped to the space of the representation of $(G^{\text{ad}})^{\vee}$ (the Langlands dual to $G^\text{ad}$) of highest weight $\la$. Since a basis for the intersection cohomology of $\overline\gr_\l$ is given by the set of MV cycles \cite{mv}, it is natural to ask whether we can equip this set of MV cycles with the structure of a crystal. Such a procedure is carried out by Braverman and Gaitsgory in \cite{bravermangaitsgory} for the finite crystal $\mathcal{L}(\la)$  and by Braverman, Finkelberg and Gaitsgory in \cite{bg2} for the infinite crystal $\mathcal{L}(-\infty)$. These two crystal structures are compatible with the inclusion $\mathcal{L}(\la)\hookrightarrow\mathcal{L}(-\infty)$.

The following theorem gives the relation between our cells and MV cycles.

\begin{theorem}\label{cellsaremvcycles}
$\overline{\phi_0(C^\ii_\m)}$ is a MV cycle. Furthermore this map defines a bijection between $\N^N$ and $\mathcal{L}(-\infty)$ for any choice of $\ii\in \I$.
\end{theorem}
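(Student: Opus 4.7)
The plan is to use Algorithm 1 to track the image of each cell in $\gr$, then identify its closure as an irreducible component of a semi-infinite intersection, and finally upgrade this to a bijection via the cited work of Kamnitzer. The map $\phi_0\colon U^-(F) \to \gr$ is the orbit map $u \mapsto u\cdot e_0$, where $e_0$ denotes the coset of $G^{\mathrm{ad}}(O_F)$, so its image is precisely $S_0^-$. For $u \in C^\ii_\m$, write $u = p_1 k'$ as produced by the algorithm, with $p_1 = u_+ h$ for some $u_+ \in U$ and $h \in \T$. By Proposition \ref{diagonalterms}, $h = \prod_{i=N}^{1} h_{-\a_i}(w_i)$, and since $v(w_i) = -m_i$, this element represents the cocharacter $\mu(\m) := \sum_{i=1}^N m_i \a_i^\vee \in \La$. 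Because $k' \in K$ fixes $e_0$, one gets $\phi_0(u) = u_+ \p^{\mu(\m)} e_0 \in S^+_{\mu(\m)}$; combined with $\phi_0(u)\in S_0^-$ this yields $\phi_0(C^\ii_\m) \subset S^+_{\mu(\m)} \cap S_0^-$.

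Next, each $C^\ii_\m$ is an irreducible ind-variety (a product of copies of $k[[\p]]$ and $k[[\p]]^\times$, as already noted in the excerpt), so $\overline{\phi_0(C^\ii_\m)}$ is a closed irreducible subvariety of the pure-dimensional intersection $\overline{S^+_{\mu(\m)} \cap S_0^-}$, which has dimension $\langle \rho, \mu(\m)\rangle$ by \cite{mv}. To identify the closure as an irreducible component (i.e.\ an MV cycle) I would run a dimension count: the map $\phi_0$ factors through $U^-(F)/U^-(O_F)$, and inside the cell the variables $y_k$ have prescribed valuation $-m_k$, contributing $m_k$ free coefficients to the principal part. The triangular change of variables between the $x_k$'s and $y_k$'s derived in Section \ref{initial} preserves the count of principal-part coefficients, so after reducing modulo $U^-(O_F)$ the image has dimension $\sum_i m_i\langle\rho,\a_i^\vee\rangle=\langle\rho,\mu(\m)\rangle$. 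This pins down $\overline{\phi_0(C^\ii_\m)}$ as an MV cycle in $\mathcal{L}(-\infty)$.

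For the bijection, I would invoke the equivalence, already noted in the introduction, between our cell decomposition and the one of \cite[Proposition 4.1]{kamnitzer}, together with the parametrization of $\mathcal{L}(-\infty)$ by $\N^N$ for each fixed $\ii$ established there (and in \cite{baumanngaussent}). Theorem \ref{maincrystalthm} gives that distinct $\m$'s produce distinct closures $\overline{C^\ii_\m}$, from which injectivity of $\m \mapsto \overline{\phi_0(C^\ii_\m)}$ is immediate. Surjectivity follows once one checks that within each stratum $\overline{S^+_\mu \cap S_0^-}$ the number of cells mapping in with $\mu(\m)=\mu$ exhausts the irreducible components, a fact that may be read off either from Kamnitzer's parametrization by BZ data or from the identification of both sides with the canonical basis of $U_q(\mathfrak{n}^+)$.

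The main obstacle is the dimension count: one must ensure that after descending to $\gr$, the polar data prescribed by $\m$ on the $y$-coordinates genuinely produces $\langle\rho,\mu(\m)\rangle$ independent parameters, with no unexpected collapse arising when one passes from the $x$-coordinate Jacobian to the quotient $U^-(F)/U^-(O_F)$. A clean route, which I would pursue, is to exhibit an explicit birational morphism from the quotient of $C^\ii_\m$ by its intersection with $U^-(O_F)$ onto an open subset of an MV cycle in Kamnitzer's list, so that the equality of closures reduces to his explicit description of these cycles in terms of tropical Plücker data.
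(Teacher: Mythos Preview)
Your setup is correct and matches the paper: you correctly use Algorithm \ref{algorithm} and Proposition \ref{diagonalterms} to show $\phi_0(C^\ii_\m)\subset S^+_{\m^\vee}\cap S_0^-$, and you correctly note that $C^\ii_\m$ is irreducible so its image closure lies in some irreducible component.

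Where you diverge from the paper is in the second step. You propose a dimension count to prove the closure is a full component, and you flag this yourself as the ``main obstacle.'' The paper bypasses this entirely with a pigeonhole argument that you have overlooked. Fix $\lambda$ and consider only those $\m$ with $\m^\vee=\lambda$. The number of such $\m$ is, by definition, the Kostant partition function $d_\lambda$; but $d_\lambda$ is also the number of irreducible components of $\overline{S^+_\lambda\cap S_0^-}$ (this is recalled in the paragraph preceding the theorem). Since the cells $C^\ii_\m$ partition $U^-$, their images under $\phi_0$ cover $S_0^-$, hence the images of those with $\m^\vee=\lambda$ cover $S^+_\lambda\cap S_0^-$. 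Each image closure sits in one component; if any closure were a proper subvariety of its component, then generic points of that component would have to be hit by some other cell, forcing two cells into the same component and leaving another component uncovered. The equal count $d_\lambda=d_\lambda$ therefore forces the assignment $\m\mapsto(\text{component containing }\overline{\phi_0(C^\ii_\m)})$ to be a bijection, and each closure to be the full component.

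This single counting argument gives both conclusions of the theorem at once, with no dimension computation and no appeal to Kamnitzer's parametrisation or to Theorem \ref{maincrystalthm}. Your route via an explicit dimension match and a birational model onto Kamnitzer's description could in principle be made to work, but it is considerably heavier, and your invocation of Theorem \ref{maincrystalthm} for injectivity is slightly indirect (that theorem concerns closures in $U^-$, not in $\gr$, so a further step is needed). The Kostant-partition-function count is the missing idea that makes the argument short.
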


Before we embark on a proof, we first pause to define some useful notation.

For $\mu\in \La$, define the map
$$\phi_\mu\map{U^-}{\gr}, \quad  \phi_\mu(u)=\p^\mu u.$$

For any $\m\in\N^N$ and a fixed choice of $\ii\in \I$, we define $\m^\vee=\sum_{j=1}^N m_j\a_{i_j}^\vee$.

For any $\la\in \Z\Phi^\vee$, let $d_\la$ be the number of ways of writing $\la=\sum_{\a\in\Phi^+}m_\a \a^\vee$ for non-negative integral $m_\a$. This is a well known function, commonly referred to as the Kostant partition function. It is equal to the dimension of the $\la$ weight space in a Verma module, and thus we know that it is equal to the number of elements of weight $\la$ in $\mathcal{L}(-\infty)$, or equivalently the number of irreducible components of $\overline{S_\la^+\cap S_0^-}$.

Now we present a proof of Theorem \ref{cellsaremvcycles}.

\begin{proof}
Fix $\la$, and consider only those $\m$ for which $\m^\vee=\la$. Note that $\phi_0(C^\ii_\m)\subset S^+_\la\cap S_0^-$. Being irreducible, its closure must lie in a stable MV cycle. Since the number of such $\m$ is equal to the number of such stable MV cycles (they are both equal to $d_\la$), and $\phi_0$ of the union of such $C^\ii_\m$ surjects onto $S^+_\la\cap S_0^-$, the closure must equal a stable MV cycle.
\end{proof}

%For each choice of $\ii\in \I$, the above Theorem allows us to put a crystal structure on the set of all cells $C^\ii_\m$. The following Lemma allows us to relate the crystal structures for different choices of $\ii$, going a long way towards the proof of Theorem \ref{maincrystalthm}.

%Now we use the action of the Kashiwara operators to conclude that our change of basis parametrisations are the same as Lusztig's, which will prove Theroem \ref{maincrystalthm}.

A crystal $\mathcal{B}$ comes equipped with five functions from $\mathcal{B}$ which are required to satisfy various compatibility relations that can be found in \cite{kashiwarasurvey}. The only two of these functions we will need are the Kashiwara operator $\tilde{e}_i\map{B}{B\sqcup \{0\}}$ and the function $\epsilon_i\map{B}{\Z\sqcup\{-\infty\}}$. The crystal $B(-\infty)$ is a lowest weight crystal, it is generated by a single element $b_0\in B(-\infty)$ together with its successive images under the various Kashiwara operators $\tilde{e}_i$.

We shall require the following proposition from \cite{baumanngaussent} which identifies $\tilde{e}_i(Z)$ and $\epsilon_i(Z)$ for a MV cycle $Z$.

\begin{proposition}[Proposition 14, \cite{baumanngaussent}]\label{kashiwaraaction}
Let $Z$ be a MV cycle, and $i\in I$. Then for each $p\in O_F$, the action of $e_{-\a_i}(pt^{\epsilon_i(Z)})$ stabalises $Z$. The MV cycle $\tilde{e}_i(Z)$ is the closure of $$
\{ e_{-\a_i}(x)z\mid z\in Z\mbox{ and }x\in F^\times \mbox{ such that } v(x)=\epsilon_i(Z)+1\}.
$$
\end{proposition}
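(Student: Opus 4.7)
My approach would be to reduce the claim to a rank-one analysis via the minimal Levi subgroup $L_i \subset G^{\mathrm{ad}}$ generated by $T^{\mathrm{ad}}$ together with the root subgroups $U_{\pm\a_i}$. Recall that in the Braverman-Finkelberg-Gaitsgory construction the crystal structure on $\mathcal{L}(-\infty)$ is controlled by the affine Grassmannian of $L_i$, and both $\tilde{e}_i$ and $\epsilon_i$ are determined by how a stable MV cycle $Z$ interacts with the $L_i$-orbit stratification of $\gr$. Since $U_{-\a_i}(F) \subset L_i(F)$, its left-translation action on $\gr$ preserves this stratification, so the geometric assertions in the proposition are governed by the rank-one behaviour of $Z$ along the $\a_i$-direction.

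In the rank-one case (modulo torus factors, $L_i$ is $\mathrm{PGL}_2$) the affine Grassmannian is a union of affine cells indexed by $\Z$ whose closures are projective spaces, and the intersections $\overline{S^+_{k\a_i^\vee} \cap S^-_0}$ admit very concrete descriptions as closures of $U_{-\a_i}(F)$-orbits of $\p^{k\a_i^\vee}$. Both assertions of the proposition can then be verified by a direct matrix calculation in $\mathrm{PGL}_2(F)$: one identifies the precise stabilizer of such a rank-one MV cycle under $U_{-\a_i}(F)$, and observes that the orbit through elements one step away from the stabilizing threshold produces the next MV cycle in the Schubert stratification, which is exactly $\tilde{e}_i$ as defined combinatorially. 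The claim about $\epsilon_i(Z)$ is then a book-keeping exercise: it records the precise valuation at which stabilization begins.

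The transfer from the rank-one case to arbitrary rank is then largely formal. Given $Z \in \mathcal{L}(-\infty)$ and a generic point $z \in Z \cap (S^-_0 \cap S^+_\la)$, the element $e_{-\a_i}(x)z$ behaves with respect to the $L_i$-stratification exactly as in the rank-one picture, because $U_{-\a_i}(F)$ acts only on the $\a_i$-factor. The main obstacle, which is where the substance of \cite{baumanngaussent}'s argument must live, is to promote set-theoretic stabilization and orbit descriptions from a generic point of $Z$ to the whole cycle $Z$, and to verify that the geometric $\tilde{e}_i$ defined via orbit closures genuinely agrees with the combinatorial $\tilde{e}_i$ coming from the Braverman-Finkelberg-Gaitsgory structure. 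This requires careful handling of the singularities of MV cycles together with an irreducibility argument (using that each $C^\ii_\m$ is irreducible, as established earlier) to propagate the equality from a dense open subset to the full Zariski closure.
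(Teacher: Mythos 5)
The paper does not prove this statement at all: it is imported verbatim, with attribution, as Proposition~14 of \cite{baumanngaussent}, and is used purely as a black box (its only appearance is in the proof of Theorem~\ref{finitecrystal}). There is therefore no in-paper argument to compare yours against; the only honest ``proof'' in the context of this paper is the citation itself.

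Judged on its own terms, your outline is consistent with how such statements are actually established in the literature on MV cycles: one reduces to the minimal Levi $L_i$ (more precisely, via the retraction of $\gr$ onto $\gr_{L_i}$ coming from the parabolic $P_i^-$, which is how the Braverman--Finkelberg--Gaitsgory crystal structure is defined), does an explicit rank-one computation, and uses irreducibility of the cycles to pass from a dense subset to the Zariski closure. But as written it has two weaknesses. First, it is a sketch that explicitly defers the substantive steps --- propagating stabilization from generic points to the whole cycle, and matching the geometric operator with the combinatorial $\tilde{e}_i$ --- to the very reference whose result is being proved, so it does not constitute a proof. Second, and more concretely, your treatment of $\epsilon_i$ is circular: you describe the claim about $\epsilon_i(Z)$ as ``book-keeping'' recording ``the valuation at which stabilization begins,'' but if $\epsilon_i(Z)$ were defined as that stabilization threshold the first assertion of the proposition would be vacuous. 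The actual content is the identification of that threshold with the crystal datum $\epsilon_i$ supplied by the Braverman--Finkelberg--Gaitsgory structure (read off from the weight of the Levi-restriction of $Z$), and that identification is precisely the nontrivial part, not an afterthought. Any complete write-up must start from the BFG definition of $\epsilon_i$ and \emph{derive} the stabilizer description, rather than the reverse.
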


%We are now in a position to prove Theorem \ref{maincrystalthm}.

%\begin{proof}An immediate corollary of this proposition is that $\epsilon_{i_1}(C^\ii_\m)=m_{1}$ and that $\tilde{e}_i(C^\ii_\m)=C^\ii_{\m'}$ where $m'_i=m_i$ if $i\neq 1$ and $m'_1=m_1+1$. This action agrees with the action of the Kashiwara operators on the Lusztig graph, so we have proven Theorem \ref{maincrystalthm}.\end{proof}

We will write $B(\la)$ and $B(-\infty)$ for the abstract crystals reaslised by $\mathcal{L}(\la)$ and $\mathcal{L}(-\infty)$ respectively. For $b\in B(-\infty)$, represented by $(\ii,\m)$, let $C_b=\overline{C_\m^\ii}$. We now move to studying the finite crystal $B(\la)$, for which our starting point is the following Proposition.

\begin{proposition}[Proposition 8.2, \cite{kashiwarasurvey}, \S 8 \cite{lusztig2}, Corollary 3.4, \cite{genminors}, Theorem 8.4, \cite{kamnitzer}]\label{ka}
The image of the natural injection $B(\la)\hookrightarrow B(-\infty)$ is given by
$$
B(\la)=\{b\in B(-\infty)\mid\epsilon_i(b)+\langle\alpha_i,\la\rangle\geq 0\ \forall\  i\in I\}.
$$
\end{proposition}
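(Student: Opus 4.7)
The plan is to argue via the geometric realization of both crystals in terms of MV cycles. By Theorem \ref{cellsaremvcycles}, each element $b \in B(-\infty)$ corresponds to a stable MV cycle $Z \in \mathcal{L}(-\infty)$, and Anderson's Proposition \ref{anderson} identifies $B(\lambda) \subset B(-\infty)$ with those stable cycles whose representative $\p^\lambda Z$ lies in $\overline{\gr_\lambda}$. Thus the proposition reduces to the geometric claim that $\p^\lambda Z \subset \overline{\gr_\lambda}$ if and only if $\epsilon_i(b) + \langle \alpha_i, \lambda\rangle \geq 0$ for every $i \in I$.

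For necessity, I would exploit the $G(O_F)$-invariance of $\overline{\gr_\lambda}$. Since each $e_{-\alpha_i}(O_F) \subset G(O_F)$ preserves $\overline{\gr_\lambda}$, conjugation by $\p^\lambda$ translates this into the requirement that the shifted one-parameter subgroup $e_{-\alpha_i}(\p^{\langle \alpha_i,\lambda\rangle} O_F)$ preserve the containment $Z \subset \p^{-\lambda}\overline{\gr_\lambda}$. By Proposition \ref{kashiwaraaction}, $Z$ is stabilized precisely by $e_{-\alpha_i}(\p^{\epsilon_i(Z)} O_F)$, whereas applying $e_{-\alpha_i}(x)$ with strictly smaller $v(x)$ sends $Z$ into a distinct irreducible component (in fact a translate of $\tilde{e}_i(Z)$). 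Matching the two valuation bounds, with signs tracked carefully, forces the claimed inequality.

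For sufficiency, I would induct on the height of the weight of $b$. The base case $b = b_0$ corresponds to $Z = \{1\}$, giving $\p^\lambda Z = \{\p^\lambda\} \subset \gr_\lambda$. In the inductive step, writing $b = \tilde{e}_i(b')$, the explicit formula from Proposition \ref{kashiwaraaction} realizes $Z$ as the closure of $\{e_{-\alpha_i}(x) z : z \in Z',\ v(x) = \epsilon_i(Z') - 1\}$; combining this with $\p^\lambda Z' \subset \overline{\gr_\lambda}$ and the $G(O_F)$-invariance of $\overline{\gr_\lambda}$, one transfers the containment to $\p^\lambda Z$. The main obstacle is the boundary case $\epsilon_i(b) + \langle \alpha_i, \lambda\rangle = 0$, where the cycle sits at the very edge of $\overline{\gr_\lambda}$ and one must verify that $Z$ does not escape the closure when the Kashiwara operator is applied once more. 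This requires the compatibility of the Braverman--Gaitsgory crystal structure on $\mathcal{L}(\lambda)$ with the Braverman--Finkelberg--Gaitsgory structure on $\mathcal{L}(-\infty)$; as an alternative one could bypass the geometry and deduce the characterization combinatorially from Kashiwara's tensor product construction of $B(\lambda)$ inside $T_\lambda \otimes B(-\infty)$, which is how the result is established in the cited references.
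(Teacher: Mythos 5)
The paper does not actually prove this proposition: it is imported wholesale from the literature (Kashiwara's Proposition 8.2, Lusztig, Berenstein--Zelevinsky, Kamnitzer), so there is no internal argument to compare yours against. Judged on its own terms, your geometric reduction via Theorem \ref{cellsaremvcycles} and Proposition \ref{anderson} is a sensible strategy, and your necessity half is essentially the computation the paper performs later inside Theorem \ref{finitecrystal} (implication $3\Rightarrow 1$). Two caveats there: Proposition \ref{anderson} requires the \emph{antidominant} weight, so the relevant translate is $\p^{w_0\la}Z$ rather than $\p^{\la}Z$, and your phrase ``with signs tracked carefully'' is concealing exactly the bookkeeping that makes this direction nontrivial. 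Knowing that $e_{-\a_i}(x)$ with small $v(x)$ carries $Z$ into a different irreducible component of $\overline{S^+_\mu\cap S^-_0}$ does not by itself show that it carries $Z$ out of $\overline{\gr_\la}$; that step needs an argument.

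The genuine gap is in sufficiency, and you have located it without filling it. In the inductive step, $G(O_F)$-invariance of $\overline{\gr_\la}$ helps only when $\p^{\la}e_{-\a_i}(x)\p^{-\la}=e_{-\a_i}(\p^{-\langle\a_i,\la\rangle}x)$ lies in $G(O_F)$, i.e.\ when $v(x)\geq\langle\a_i,\la\rangle$; the boundary case $\epsilon_i(b)+\langle\a_i,\la\rangle=0$ is precisely where this fails, and neither Proposition \ref{anderson} nor Proposition \ref{kashiwaraaction} prevents the cycle from escaping the closure there. Moreover, the whole reduction presupposes that the geometric inclusion $\mathcal{L}(\la)\hookrightarrow\mathcal{L}(-\infty)$ coincides with the abstract crystal embedding $B(\la)\hookrightarrow B(-\infty)$ --- the compatibility of the Braverman--Gaitsgory and Braverman--Finkelberg--Gaitsgory structures --- which is itself part of what the cited references establish, not a free hypothesis. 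It is telling that the paper's own Theorem \ref{finitecrystal} invokes Proposition \ref{ka} exactly to supply the direction you are struggling with: it is not a formal consequence of the surrounding geometry. Your closing alternative, Kashiwara's combinatorial characterisation of $B(\la)$ inside $T_\la\otimes B(-\infty)$, is the standard proof and the right thing to cite; as written, though, that sentence is an appeal to authority rather than an argument, so the proposal as a whole should be regarded as a plausible sketch with an open hole at the boundary case, not a proof.
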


We use this to identify $B(\la)$ in terms of cells $C_b$. There is an alternative characterisation of the cells of the finite crystal $B(\la)$ which will prove to be useful in the following section. For $u\in U^-$ and $i\in I$, we define $f_i(u)$ to be element of $F$ such that in writing $u=\prod_{\a\in\Phi}e_{-\a}(z_\a)$, we have $z_{\a_i}=f_i(u)$. This result can be considered as an analogue to Theorems 8.3 and 8.5 of \cite{kamnitzer}.

\begin{theorem}\label{finitecrystal}
Suppose that $\la$ is dominant and write $\la'=w_0\la$. Then the following three conditions on an element $b$ of the crystal basis are equivalent.
\begin{enumerate}
\item $b\in B(\la)$
\item $\p^{\la} C_b \p^{-\la} \subset K$
\item For all $i\in I$ and $u\in C_b$, we have $f_i(\p^{\la} u\p^{-\la})\in O_F$.
\end{enumerate}
\end{theorem}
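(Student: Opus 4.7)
The plan is to prove the cycle of implications $(2)\Rightarrow(3)\Rightarrow(1)\Rightarrow(2)$, with $(2)\Rightarrow(3)$ essentially formal and the other two directions supplying the geometric content.

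The implication $(2)\Rightarrow(3)$ is immediate from unwinding definitions. Writing $u\in U^-$ as $u=\prod_{\alpha\in\Phi^+}e_{-\alpha}(z_\alpha)$ in any fixed ordering, repeated use of (\ref{ehcommute}) gives $\p^\lambda u\p^{-\lambda}=\prod_{\alpha\in\Phi^+} e_{-\alpha}(\p^{-\langle\alpha,\lambda\rangle}z_\alpha)$. If this product lies in $K=G(O_F)$ then each coordinate $\p^{-\langle\alpha,\lambda\rangle}z_\alpha$ lies in $O_F$, and specialising to simple roots yields $f_i(\p^\lambda u\p^{-\lambda})=\p^{-\langle\alpha_i,\lambda\rangle}f_i(u)\in O_F$, which is condition $(3)$.

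For $(3)\Rightarrow(1)$, the plan is to recognise the quantity $\min_{u\in C_b}v(f_i(u))$ as a crystal invariant of $b$. Condition $(3)$ unwinds to $\min_{u\in C_b}v(f_i(u))\geq\langle\alpha_i,\lambda\rangle$ for every $i\in I$. Proposition \ref{kashiwaraaction} describes $\epsilon_i(b)$ as the stabilising radius of the $e_{-\alpha_i}(\cdot)$-action on the MV cycle $\overline{\phi_0(C_b)}$; unravelling what that action does on the $-\alpha_i$-component identifies this radius with $\min_{u\in C_b}v(f_i(u))$. With this identification, condition $(3)$ becomes the inequality $\epsilon_i(b)+\langle\alpha_i,\lambda\rangle\geq 0$ for all $i$, which is Proposition \ref{ka}'s characterisation of $b\in B(\lambda)$.

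For $(1)\Rightarrow(2)$, the plan is to invoke Anderson's Proposition \ref{anderson} to translate $b\in B(\lambda)$ into a geometric containment. If $b\in B(\lambda)$, then under Anderson's identification of $\mathcal{L}(\lambda)\subset\mathcal{L}(-\infty)$ (applied to the antidominant weight $\lambda'=w_0\lambda$), the appropriately translated MV cycle $\p^{\lambda'}\cdot\overline{\phi_0(C_b)}$ lies inside $\overline{\gr_\lambda}$, the closure of the spherical orbit $K\p^\lambda K\subset\gr$. Unwinding this containment, using the description of $\overline{\gr_\lambda}$ as a union of $K\p^\mu K$ for dominant $\mu\leq\lambda$ and translating back to $U^-$ via the Iwasawa decomposition, produces exactly the congruence condition $\p^\lambda u\p^{-\lambda}\in K$ of $(2)$.

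The main obstacle is bookkeeping the various sign and shift conventions. The $\epsilon_i$ appearing in Proposition \ref{ka} (inherited from Kashiwara's survey) and the $\epsilon_i$ of Proposition \ref{kashiwaraaction} (from Baumann-Gaussent) must be matched, potentially via Kashiwara's star involution on $B(-\infty)$; and Anderson's proposition naturally produces shifts by $\p^{\lambda'}=\p^{w_0\lambda}$ rather than by $\p^\lambda$, so dominance of $\lambda$ together with $\gr_\lambda=\gr_{\lambda'}$ must be used to reconcile the two formulations. Once these dictionaries are pinned down, each implication reduces to a routine unwinding of definitions, with the single nontrivial geometric input being Proposition \ref{kashiwaraaction}.
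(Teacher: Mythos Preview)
Your proposal is correct and follows essentially the same route as the paper: the implications $(2)\Rightarrow(3)$, $(3)\Rightarrow(1)$ via Propositions \ref{kashiwaraaction} and \ref{ka}, and $(1)\Rightarrow(2)$ via Anderson's Proposition \ref{anderson} are exactly the steps the paper takes. The one place where the paper is sharper is in $(1)\Rightarrow(2)$: rather than arguing loosely via the Cartan stratification, it invokes the Bruhat--Tits identity $\phi_{\lambda'}^{-1}(\overline{\gr_\lambda})=\p^{-\lambda}K\p^{\lambda}\cap U^-$ from \cite[4.4.4(ii)]{bruhattits}, which cleanly handles the shift-by-$\lambda'$ bookkeeping you flag as an obstacle.
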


\begin{proof}
$1\Rightarrow 2$.
Proposition \ref{anderson} shows that $b\in B(\la)$ if and only if $\phi_{\la'}(C_b)\subset \overline{\gr_\la}$, where $\la'=w_0\la$ is the image of $\la$ under the action of the longest word in $W$. So if $b\in B(\la)$, we have $C_b\subset\phi^{-1}_{\la'}(\overline{\gr_\la})$. It is shown in \cite[4.4.4(ii)]{bruhattits} that $\phi^{-1}_{\la'}(\overline{\gr_\la})=\p^{-\la}K\p^\la\cap U^-$, completing this implication

$2\Rightarrow 3$.
This is immediate.

$3 \Rightarrow 1$.
Suppose for all $i\in I$ and $u\in C_b$ that $f_i(\p^\la u \p^{-\la})\in O_F$. Then $x_i(u)\in \p^{-\langle\a_i,\la\rangle}O_F$. So utilising Proposition \ref{kashiwaraaction}, we have that the action of $e_{-\a_i}(\p^{\epsilon_i(b)}O_F)$ stabalising $b$ implies that $\epsilon_i(b)\geq -\langle\a_i,\la\rangle$ which by the characterisation of $B(\la)$ as a subset of $B(-\infty)$ in Proposition \ref{ka} tells us that $b\in B(\la)$.

\end{proof}

\section{Computation of a Whittaker Function in Type A}\label{tok}

We conclude by presenting a sample calculation of a metaplectic Whittaker function, and show that in this case, it is equal to the $p$-part of a Weyl Group Multiple Dirichlet Series, as defined and studied in \cite{wgmdsxtal}. This identity will be more than an equality of functions, as we will show that for our method of writing a Whittaker function as a weighted sum over a crystal, we will recover exactly the same weighted sum that appears in \cite{wgmdsxtal}. We begin by reproducing the combinatorial description of the $p$-part of a type $A_r$ multiple Dirichlet series.

A Gelfand-Tsetlin pattern is a triangular array of natural numbers
\[
\mathfrak{T}=\left\{
\begin{array}{cccccccccc}
a_{0,0} & & a_{0,1} & & a_{0,2} & & \ldots & a_{0,r-1} & & a_{0,r}  \\
 & a_{1,1} & & a_{1,2} & & a_{1,3} & \ldots &  & a_{1,r} & \\
 & & \ddots & & & & & \Ddots  & & \\
 & & & & & a_{r,r} & & & &
\end{array}
 \right\}
\]
subject to the inequalities $a_{i,j}\geq a_{i+1,j+1}\geq a_{i,j+1}$ for all $i$ and $j$.

To such a pattern, one attaches integers $e_{i,j}$ for all $1\leq i\leq j\leq r$ defined by
$$
e_{i,j}=\sum_{k=j}^r a_{i,k}-a_{i-1,k},
$$
and a weight
$$
G(\mathfrak{T})=\prod_{1\leq i\leq j\leq r} \gamma(a_{i,j}),\quad \gamma(a_{i,j})=\begin{cases} q^{e_{i,j}} &\text{if  } a_{i-1,j-1}>a_{i,j}=a_{i-1,j}, \\
g(e_{i,j},0) &\text{if  } a_{i-1,j-1}>a_{i,j}>a_{i-1,j}, \\
g(e_{i,j},-1) &\text{if  } a_{i-1,j-1}=a_{i,j}>a_{i-1,j}, \\
0 &\text{if  } a_{i-1,j-1}=a_{i,j}=a_{i-1,j}, \end{cases}
$$ where the Gauss sums $g(a,b)$ are as defined by (\ref{gausssumdefn}) (in Section \ref{prelim}). Furthermore, define integers
$$
k_i(\mathfrak{T})=\sum_{j=1}^r (a_{i,j}-a_{0,j}).
$$
Then the $p$-part of a type A multiple Dirichlet series is given by the following expression:

$$
\sum_{\mathfrak{T}}G(\mathfrak{T}) \, %x_1^{k_1}\ldots
q^{-2(k_1s_1+\ldots +k_rs_r)}
$$
where the sum is over all Gelfand-Tsetlin patterns with a fixed top row and $s_1,\ldots,s_r\in\C$ is a collection of complex variables. %(which appear in \cite{wgmdsxtal} as $x_i=q^{-2s_i}$).

We shall work with the simple group $G=SL_{r+1}$ with the nicest possible choice of the long word decomposition. Explicitly realise
$$\Phi^+=\{ (i,j)\in [r+1]\times [r+1]\mid i<j \}.$$
Write $s_i$ for the simple reflection corresponding to the simple root $(i,i+1)$. Then we choose $\ii\in\I$ corresponding to the long word decomposition $w_0=s_1(s_2s_1)(s_3s_2s_1)\ldots(s_rs_{r-1}\ldots s_2s_1)$. We shall refer to this word as the Gelfand-Tsetlin word. This choice of word determines the ordering $<_\ii$ which can be explicitly stated in the form $(i,j)<_\ii (i',j')$ if $i<i'$ or if $i=i'$ and $j<j'$.

Note that a particular feature of the Gelfand-Tsetlin word is that in performing Algorithm \ref{algorithm}, the Iwasawa decomposition for the lower right copy of $SL_r$ in $SL_{r+1}$ is performed first as a substep of performing the Iwasawa decomposition for $SL_{r+1}$. This enables us to perform induction on $r$, a feature we shall frequently exploit below.

For a more in depth study of our main algorithm, we shall explitly realise $G$ as a $(r+1)\times (r+1)$ matrix group in the usual way. Thus $e_{(i,j)}(x)$ has an $x$ appearing in the entry of the $i$-th row and $j$-th column, ones along the diagonal and zeroes elsewhere.

Firstly, we shall require further information regarding the top row of the matrix $p_1$ that appears in our Iwasawa decomposition.

\begin{lemma}
 The entries in the top row of $p_1$ are given by
 \[
  (p_1)_{1j} =
\begin{cases}
  \prod_{i=j}^{r}\frac{1}{w_{1,j+1}},  & \mbox{if }|y_{1,j}|>1 \mbox{ or } j=1\\
  0, & \mbox{otherwise.}
\end{cases}
 \]
\end{lemma}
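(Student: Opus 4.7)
The plan is to track the top row $v_k := ((p_k)_{1,1}, (p_k)_{1,2}, \ldots, (p_k)_{1,r+1})$ of $p_k$ by downward induction on $k$, starting from $v_{N+1} = (1,0,\ldots,0)$ since $p_{N+1}=1$. The analysis splits according to whether the root $\alpha_k$ being processed has first coordinate at least $2$ (which occurs for $k > r$) or equals $(1,k+1)$ (for $1 \leq k \leq r$), reflecting the fact that the Gelfand-Tsetlin word handles the lower $SL_r$ block first.

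For $k > r$ write $\alpha_k = (a,b)$ with $2 \leq a < b \leq r+1$. I show inductively that $v_k = (1,0,\ldots,0)$. Left multiplication by $e_{-\alpha_k}(x_k) = I + x_k E_{b,a}$ fixes row $1$; since pushing $e_{-\alpha_k}$ through does not change the underlying group element, equating the top rows of $p'_k e_{-\alpha_k}(y_k)$ and $e_{-\alpha_k}(x_k)p_{k+1}$ together with the explicit shape of $e_{-\alpha_k}(y_k)$ forces $v'_k = (1,0,\ldots,0)$. The Borel correction $h_{\alpha_k}(y_k^{-1}) e_{\alpha_k}(y_k)$, which computes explicitly as $\diag(1,\ldots,y_k^{-1},\ldots,y_k,\ldots,1) + E_{a,b}$, acts on columns $a,b \geq 2$ only, so right multiplication leaves the top row unchanged.

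For $k \leq r$ we have $\alpha_k = (1,k+1)$, and the same formula for the Borel correction becomes $\diag(y_k^{-1},1,\ldots,y_k,\ldots,1) + E_{1,k+1}$, now genuinely touching the top row. An analogous top-row comparison yields the recurrence
\[
(v_k)_j = \begin{cases} (v_{k+1})_1 - y_k(v_{k+1})_{k+1} & j=1,\ |y_k|\leq 1 \\ w_k^{-1}(v_{k+1})_1 - (v_{k+1})_{k+1} & j=1,\ |y_k|>1 \\ (v_{k+1})_1 & j=k+1,\ |y_k|>1 \\ (v_{k+1})_j & \text{otherwise,} \end{cases}
\]
where $w_k = w_{1,k+1}$.

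The lemma then follows by downward induction on $k$ of the strengthened statement: $(v_k)_1 = \prod_{i=k}^r w_{1,i+1}^{-1}$, $(v_k)_j = 0$ for $2 \leq j \leq k$, and for $j \geq k+1$, $(v_k)_j$ equals $\prod_{i=j}^r w_{1,i+1}^{-1}$ if $|y_{1,j}|>1$ and $0$ otherwise. The base case $k=r+1$ is immediate. In the inductive step, $(v_{k+1})_{k+1}=0$ by hypothesis, and in the case $|y_k|\leq 1$ one uses the convention $w_{1,k+1}=1$ to shift the product's lower limit from $k+1$ down to $k$. The lemma is the case $k=1$. The main (minor) obstacle is to check that both branches of the recurrence collapse consistently to the same product formula once the inductive hypothesis is inserted; everything else is routine bookkeeping.
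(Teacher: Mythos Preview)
Your proof is correct and follows essentially the same approach as the paper: both track how the top row of $p_k$ evolves as the algorithm runs, observing that the steps with $k>r$ (the lower $SL_r$ block) leave it equal to $(1,0,\ldots,0)$ and that the step at root $(1,j)$ is the only one that can create a nonzero $(1,j)$-entry, which then equals the current $(1,1)$-entry. The paper compresses the argument by invoking induction on $r$ for the $k>r$ phase and Proposition~\ref{diagonalterms} for the $(1,1)$-entries, whereas you carry out both computations directly via an explicit recurrence and a strengthened inductive hypothesis on $k$; this makes your version more self-contained but the underlying mechanism is identical.
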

\begin{proof}
The proof uses Proposition \ref{diagonalterms}, which in particular proves the lemma in the $j=1$ case, so we may assume $j>1$. By induction on $r$, we know $(p_{r+1})_{1j}=0$.
Note that the only time that the $i,j$-entry of the $p$-matrix in the algorithm is altered is when a multiplication by $h_{1,j}(y_{1,j}^{-1})e_{1,j}(y_{1,j})$ occurs.
Hence, $(p_1)_{ij}=(p_j)_{11}$ if $|y_{i,j}|\geq 1$ and is zero otherwise. Thus, the lemma follows from Proposition \ref{diagonalterms}.
\end{proof}

We shall remark that the above lemma, when applied to $SL_r$, yields the values of the entries of the second row of the matrix $p_{r+1}$.

We will write $\la$ as $\la=\sum_i \la_i\p_i$ where the $\p_i$ are the fundamental weights. Then the value of the character $\psi_\la$ is given by
$$
\psi_\la(u)=\prod_{i=1}^r \psi(\p^{-\la_i}x_{i,i+1}).
$$

Now we come to the formula for $\psi(u)$ in terms of the $w$ and $y$ variables.
Let \[
        \Psi_{\la}^{i,j}(u)=\begin{cases}
                   \psi\left(y_{i,j}\,\p^{-\la_i}\displaystyle\prod_{k=j+1}^{r+1}\frac{w_{i,k}}{w_{i+1,k}}\right) & \mbox{if }j=i+1\mbox{ or }|y_{i+1,j}|>1 \\
		   1 &\mbox{otherwise}.
                  \end{cases}
       \]

\begin{proposition}\label{archaracterformula}
We have the following formula for the value of the additive character $\psi$ on $U^-$, in the $w_\a$ coordinate system.
 $$\psi_{\la}(u)=\prod_{\a\in\Phi^+} \Psi^\a_\la(u).$$
\end{proposition}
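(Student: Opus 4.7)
The plan is to reduce the claim to a formula for the subdiagonal matrix entries $u_{i+1,i}$ of $u$, and then verify it by induction on the rank $r$. Since $\psi_\lambda$ is an additive character of $U^-$, it factors through the abelianization $U^-/[U^-,U^-]$ and so depends only on the coefficients of the simple-root subgroups; in matrix form, $\psi_\lambda(u)=\prod_{i=1}^r \psi(\varpi^{-\lambda_i}u_{i+1,i})$, where $u_{i+1,i}$ denotes the $(i+1,i)$-entry. The proposition is therefore equivalent to the identity
$$
u_{i+1,i} = y_{i,i+1}\prod_{k=i+2}^{r+1}\frac{w_{i,k}}{w_{i+1,k}} + \sum_{\substack{j>i+1 \\ |y_{i+1,j}|>1}} y_{i,j}\prod_{k=j+1}^{r+1}\frac{w_{i,k}}{w_{i+1,k}}
$$
for each $i\in\{1,\ldots,r\}$.

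I would prove this identity by induction on $r$. The induction exploits the feature of the Gelfand-Tsetlin word noted in the paper: Algorithm~1 for $SL_{r+1}$ begins by processing the roots of the lower-right $SL_r$ sub-block, which amounts to running the $SL_r$-algorithm on the corresponding sub-matrix. Consequently the coordinates $y_{i,j}, w_{i,j}$ with $i \ge 2$ coincide with those of the $SL_r$-algorithm, so the inductive hypothesis establishes the formula for rows $i \ge 2$. It therefore suffices to verify the formula for row $i = 1$.

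For $i = 1$, I would trace Algorithm~1 through the remaining steps, which process the roots $(1, r+1), (1, r), \ldots, (1, 2)$ in that order. At each such step, pushing $e_{-(1, j)}(x_{(1, j)})$ past the current $p_{k+1}$ -- which by this point consists of toral factors $h_{-\alpha}(w_\alpha)$ and positive-root factors $e_\alpha(y_\alpha)$ accumulated from the case-2 branches of earlier steps -- produces two types of contributions. Commuting past a toral factor rescales the argument of $e_{-(1, j)}$ by a power of $w_\alpha$, and the accumulated rescalings yield exactly the product $\prod_{k=j+1}^{r+1} w_{1,k}/w_{2,k}$ appearing in the formula. Commuting past a positive-root factor $e_\beta(\cdot)$ generates a commutator term $e_{-(1,j)+\beta}(\cdot)$; the combinatorial key in $A_r$ is that the sum $-(1,j) + \beta$ equals the simple root $-(1,2)$ -- the only negative root that can contribute to $u_{2,1}$ -- precisely when $\beta = (2, j)$. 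The factor $e_{(2, j)}(y_{2, j})$ is present in $p_{k+1}$ exactly when the step that processed the root $(2, j)$ was in case 2, which is the condition $|y_{2, j}| > 1$ appearing in the definition of $\Psi^{1, j}_\lambda$. Combining the direct simple-root contribution from the step for $(1, 2)$ with the commutator contributions from the steps for $(1, j)$ with $j > 2$ then produces the claimed formula for $u_{2, 1}$.

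The main obstacle will be the careful bookkeeping of the exponents and signs arising from the various commutator relations, but the combinatorial simplification that only the factor $e_{(2, j)}$ in $p_{k+1}$ yields an $e_{-(1, 2)}$ commutator reduces the calculation to a finite, explicit check using the relations of Theorem~\ref{relations}.
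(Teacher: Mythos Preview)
Your overall strategy matches the paper's exactly: induct on $r$ using the nested $SL_r$ structure of the Gelfand--Tsetlin word to reduce to $i=1$, then follow the algorithm through the roots $(1,r+1),\ldots,(1,2)$. The paper, however, executes this step differently. It works in the explicit matrix realisation, tracks the $(2,1)$ entry of the running product $e_{-\alpha_1}(x_1)\cdots e_{-\alpha_k}(x_k)p_{k+1}$ through the algorithm, and crucially invokes the preceding lemma on the top row of $p_1$ (applied to the $SL_r$ block, hence giving the second row of $p_{r+1}$) to read off each contribution. Your plan to instead chase $e_{-(1,2)}$ commutator terms is viable in principle, and your key combinatorial observation---that among positive roots $\beta$, only $\beta=(2,j)$ satisfies $-(1,j)+\beta=-(1,2)$---is correct and is exactly what makes the formula clean.

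There are, however, two imprecisions in your sketch that would have to be repaired. First, $p_{k+1}$ is not only ``toral factors and positive-root factors'': it also carries negative-root factors $e_{-(1,l)}$ for $l<j$ on its left. These commute with $e_{-(1,j)}$, so this is harmless, but it should be said. Second, and more substantively, the coefficient of $e_{(2,j)}$ in the $G_k$-normal form of $p_{k+1}$ is \emph{not} the raw $y_{2,j}$: each time a later step of the algorithm lands in case~2, the factor $h_{\alpha_l}(y_l^{-1})$ must be commuted leftward through the positive-root block to merge with $h$, and this rescales every positive-root coefficient. Correspondingly, the toral rescaling of $e_{-(1,j)}$ alone does not produce the factor $\prod_{k>j}w_{1,k}/w_{2,k}$; that product arises as the combination of (i) the rescaling that converts the pushed argument into $y_{1,j}$, and (ii) the accumulated rescalings sitting in the $(2,j)$ position---which is precisely what the preceding lemma packages as $(p_{r+1})_{2,j}$. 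If you carry out your commutator bookkeeping carefully you will end up reproving that lemma; the paper's matrix-entry route, which simply cites it, is shorter.
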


\begin{proof}
By induction on $r$, it suffices to consider the case where $i=1$. Since we know that $\psi_\la(u)=\prod_{i}\psi(x_{i,i+1}\p^{-\la_i})$, we have to keep track of the $2,1$-entry of the matrix $e_{-\a_1}(x_1)\ldots e_{-\a_k}(x_k)p_k$ in order to express $x_{1,2}$ in terms of the $w$ or $y$ variables. The following sequence of events happens to this entry (note that we only need to consider $k\leq r$).

Initially, the (2,1) entry is $x_{1,2}$. For $k$ running from $r+1$ to 3, the following happens.

First one subtracts $y_{1,k}(p_{r+1})_{2,k}$, then divides by $w_{1,k}$.

At the end of this process, one is left with $y_1$, and hence has a formula for $x_1$ in terms of the $w$ variables. Then a simple application of the fact that $\psi$ is an additive character, together with our knowledge of $(p_{r+1})_{2,k}$ obtained via the previous lemma produces our desired formula for $\psi_\la(u)$.
\end{proof}

With the help of the calculation of the previous lemma, as well as the characterisation of the finite crystals from Theorem \ref{finitecrystal} from the previous section, we are able to immediately deduce the following explicit description of $B(\la+\rho)$ in the case under consideration. The weight $\rho$ is as usual half the sum of the positive coroots, in terms of the fundamental weights, we have $\rho=\p_1+\ldots +\p_r$.

\begin{proposition}
We have $(\ii,\m)\in B(\la+\rho)$ if and only if the following inequalities hold.

For each $(i,j)\in\Phi^+$, $m_{i,j}\geq 0$ and
\begin{equation}\label{crystalinequalities}
\sum_{k=j}^{r+1} m_{i,k}\leq \lambda_{i}+1+\sum_{k=j}^{r}m_{i+1,k+1}.
\end{equation}
\end{proposition}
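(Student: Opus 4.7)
My plan is to invoke Theorem~\ref{finitecrystal} with $\la+\rho$ in place of $\la$. Since $\rho=\p_1+\cdots+\p_r$ lies in $\La$ and $\la$ is dominant, $\la+\rho$ is a dominant element of $\La$, so the theorem applies. By the equivalence of conditions (1) and (3), $(\ii,\m)\in B(\la+\rho)$ holds if and only if
\[
f_i(\p^{\la+\rho}u\p^{-(\la+\rho)})\in O_F
\qquad\text{for every }i\in I\text{ and every }u\in C_\m^\ii.
\]
Since conjugation by $\p^{\la+\rho}$ rescales the coordinate of $e_{-\a_i}$ by a power of $\p$ of exponent controlled by $\la_i+1$, the stated condition amounts to a single valuation bound on $f_i(u)$, and this bound must hold uniformly as $u$ varies over the cell $C_\m^\ii$.

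The next step is to compute $f_i(u)$ explicitly in terms of the algorithm's variables. For the Gelfand-Tsetlin word the positive roots with fixed first index $i$ are processed as a contiguous block of steps by Algorithm~\ref{algorithm}, and so---running through the substitutions in the same manner as in the calculation of the top row of $p_1$ that precedes Proposition~\ref{archaracterformula}, and inducting on $r$ to exploit the embedded lower-right copy of $SL_r$---one obtains a telescoping closed-form expansion of $f_i(u)$ as a sum of $r+1-i$ terms, indexed by $j\in\{i+1,\ldots,r+1\}$. A direct valuation count shows that the valuation of the $j$-th summand, modulo unit factors, is governed by the linear expression $\sum_{k=j}^{r+1}m_{i,k}-\sum_{k=j}^{r}m_{i+1,k+1}$.

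Since the $y$-coordinates on $C_\m^\ii$ range over units in $O_F^\times$ (or over $O_F$ when the corresponding $m$ vanishes) while the $w$-coordinates have valuations pinned to $-m_{i,k}$, the ultrametric inequality forces the valuation of the entire sum to equal its term-by-term minimum on a Zariski-dense open subset of $C_\m^\ii$. Hence the required bound on $f_i(u)$ holds throughout $C_\m^\ii$ precisely when each individual summand satisfies it, which gives one inequality per $(i,j)$, namely the $(i,j)$-instance of~(\ref{crystalinequalities}). Ranging $i$ over $I$ and $j$ over $\{i+1,\ldots,r+1\}$ then exhausts $\Phi^+$. The nonnegativity $m_{i,j}\geq 0$ is automatic from $m_{i,j}=-v(w_{i,j})$ together with $|w_{i,j}|\geq 1$.

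The principal technical obstacle is the explicit inductive unwinding of $f_i(u)$ in the $w$- and $y$-coordinates; this requires careful bookkeeping of the commutator corrections generated during Algorithm~\ref{algorithm}, but is tractable thanks to the recursive block-structure of the Gelfand-Tsetlin word. A secondary subtlety is ensuring that the ultrametric estimate is saturated generically on $C_\m^\ii$, so that no accidental cancellation spoils the pointwise condition; this follows because the leading monomials of distinct summands, viewed as polynomials in the $y$-variables, are linearly independent, so the locus of cancellation is a proper closed subset of the cell.
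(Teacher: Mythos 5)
Your proposal is correct and follows essentially the same route as the paper, which deduces the proposition "immediately" from condition (3) of Theorem \ref{finitecrystal} together with the telescoping expression for the simple-root coordinates $f_i(u)=x_{i,i+1}$ in the $w$- and $y$-variables obtained in the lemma on the top row of $p_1$ and in the proof of Proposition \ref{archaracterformula}. Your explicit attention to the generic non-cancellation needed for the "only if" direction is a detail the paper leaves implicit, but it is the right justification.
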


In order to describe a combinatorial formula for the Whittaker function, we first decorate the tuple of integers $\m$ by circles and boxes.
Suppose $(\ii,\m)\in B(\la+\rho)$. For each $\a\in\Phi^+$, we define a weighting $w(\m,\a)$ as follows. Say that $m_\a$ is circled if $m_\a=0$ and boxed if equality holds in the corresponding inequality in (\ref{crystalinequalities}), ie if $\a=(i,j)$ and $\sum_{k=j}^{r+1} m_{i,k}=\lambda_{i}+1+\sum_{k=j+1}^{r+1}m_{i+1,k+1}$. (This vocabulary is used to match that of \cite{wgmdsxtal}). Corresponding to this decorating, we define a weight function
\begin{equation}\label{wdefn}
w(\m,\a)=\begin{cases}
\frac{q-1}{q^2}g(r_\a,s_\a)\ & \text{if $m_\alpha$ is not circled }  \\
1 & \text{if $m_\a$ is circled but not boxed} \\
0 & \text{if $m_\a$ is both boxed and circled}  \end{cases}
\end{equation}
where the integers $r_\a$ and $s_\a$ are defined by
\[
r_{i,j}=\sum_{k\leq i} m_{k,j}
\] and
$$
s_{i,j}=\la_{i}+\sum_{k=j}^{r}m_{i+1,k+1}-\sum_{k=j}^{r+1}m_{i,k}.
$$
Note that in the case where $m_\a$ is neither boxed nor circled, then Proposition \ref{gauss} on evaluating such Gauss sums can be used to simplify the resulting expression, which only depends on the value of $r_\a$ modulo $n$.

\begin{theorem}
The integral over $C_\m^\ii$ is given by
\[
\int_{C_\m^\ii}f(u)\psi_\la(u)du=\begin{cases}
\prod_{\a\in\Phi^+}w(\m,\a)x_\a^{m_\a} & \text{if } (\ii,\m)\in B(\la+\rho)  \\
0 & \text{otherwise}   \end{cases}
\]
\end{theorem}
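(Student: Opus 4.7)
The plan is to reduce the computation of $\int_{C_\m^\ii} f(u)\psi_\la(u)\,du$ to an iterated one-variable integration in the coordinates $y_k$ of Algorithm \ref{algorithm}, paralleling the proof of Lemma \ref{lemma4} but now carrying along the character $\psi_\la$. Writing $y_k = \p^{-m_k} u_k$ with $u_k \in O_F^\times$ when $m_k>0$, and keeping $y_k \in O_F$ when $m_k = 0$, the cell is parametrised accordingly. I will use Proposition \ref{archaracterformula} to expand $\psi_\la(u)$ as $\prod_\a \Psi_\la^\a(u)$ in these coordinates, Proposition \ref{diagonalterms} together with (\ref{fdefn}) to express $f(u)$, and the Jacobian worked out in Section \ref{initial} (from the volume computation) to change measure.

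I then integrate $y_1,y_2,\ldots,y_N$ in the natural order. At step $k$, commuting $h_{-\a_k}(w_k)$ past the torus part already accumulated from the previously-processed variables produces, exactly as in the proof of Lemma \ref{lemma4}, a Hilbert-symbol factor $(\p,u_k)^{e_k}$ whose exponent $e_k$ is the analogue of (\ref{exponent}); a direct combinatorial computation specific to the Gelfand--Tsetlin word identifies $e_k$ modulo $n$ with $l_{\a_k}r_{\a_k}$. Simultaneously, reading the argument of $\Psi_\la^{\a_k}$ in the $w$-coordinates shows that its valuation is precisely $-s_{\a_k}$. Hence, when $m_k>0$, the $u_k$-integral becomes (up to a unit twist absorbable by a change of variables in the Hilbert symbol) the Gauss sum $g(r_{\a_k},s_{\a_k})$; when $m_k=0$, the $y_k$-integral becomes $\int_{O_F}\psi(\p^{-s_{\a_k}} y_k \cdot \text{(unit)})\,dy_k$, which equals $1$ if $s_{\a_k}\geq 0$ and vanishes if $s_{\a_k}\leq -1$ by the conductor property of $\psi$.

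Collecting these factors over all $\a\in\Phi^+$, together with the $f$-values and volume normalisations, produces exactly $\prod_\a w(\m,\a)x_\a^{m_\a}$ on the crystal-admissible cells: the constant $\tfrac{q-1}{q^2}$ emerges from the interaction of the Jacobian $q^{\langle\rho,\a^\vee\rangle m_\a}$, the factor $q^{-\langle\rho,\a^\vee\rangle m_\a}$ from $f$, the volume $q^{m_k}(1-q^{-1})$ of $\p^{-m_k}O_F^\times$, and the Haar normalisation in the definition of the Gauss sum. Outside $B(\la+\rho)$ some inequality in (\ref{crystalinequalities}) fails strictly, forcing $s_\a<-1$ for the offending root; by Proposition \ref{gauss}(2) the associated Gauss sum is zero and the whole integral vanishes. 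The main obstacle is the two compatibility identities ``$e_k\equiv l_{\a_k}r_{\a_k}\pmod n$'' and ``$v(\text{argument of }\Psi_\la^{\a_k})=-s_{\a_k}$,'' both of which require unpacking the Gelfand--Tsetlin-word-specific combinatorics of $r_\a$ and $s_\a$ from the Iwasawa algorithm; the nested $SL_r\hookrightarrow SL_{r+1}$ structure of this word should however allow a clean induction on the rank to reduce the verification to the innermost simple-root factor at each stage.
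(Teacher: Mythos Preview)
Your outline is close in spirit to the paper's argument, but it contains two genuine gaps.

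First, the identity you assert, that the Hilbert-symbol exponent $e_k$ coming from (\ref{exponent}) agrees modulo $n$ with $r_{\a_k}$, is simply false as stated. Already in $SL_3$ with the Gelfand--Tsetlin word one has $e_{(1,3)}=m_{1,3}+m_{1,2}$ while $r_{1,3}=m_{1,3}$. What is true is that after one absorbs the ``unit twist'' at each step, the extra Hilbert-symbol factors produced by that change of variable modify the exponents on the remaining $u$-variables, and it is only the \emph{cumulative} exponent that equals $r_\a$. Tracking this is exactly the content of the paper's key step: the monomial substitution $t_{i,j}=\prod_{k\geq j}u_{i,k}\big/\prod_{k>j}u_{i+1,k}$ together with the combinatorial identity
\[
\prod_{\a\in\Phi^+} u_\a^{\,m_\a+\sum_{\a<_\ii\beta}\langle\beta,\a^\vee\rangle m_\beta}
=\prod_{(i,j)\in\Phi^+} t_{i,j}^{\,r_{i,j}},
\]
which must be checked directly. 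Your proposal hides this entire computation behind the phrase ``absorbable by a change of variables''; without it the argument does not go through.

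Second, you have missed the case distinction built into Proposition~\ref{archaracterformula}: when $j>i+1$ and $m_{i+1,j}=0$ the factor $\Psi_\la^{i,j}$ is identically~$1$, so there is no additive character at all in the $u_{i,j}$-integral. In that situation the integral over $u_{i,j}$ is $\int_{O_F^\times}(u_{i,j},\p)^{r_{i,j}}\,du$, not a Gauss sum, and your argument that ``$s_\a\leq -2$ forces $g(r_\a,s_\a)=0$'' no longer applies. The paper handles this by observing that $m_{i+1,j}=0$ forces $s_{i,j}=s_{i,j-1}+m_{i,j-1}\geq s_{i,j-1}$, so if $s_{i,j}\leq -1$ then also $s_{i,j-1}\leq -1$, and one recurses to a smaller~$j$ where the character has not disappeared and the vanishing is genuine. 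This propagation argument is essential both for the vanishing outside $B(\la+\rho)$ and for obtaining the correct weight $w(\m,\a)$ inside; your proposal does not address it.
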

\begin{remark}
We expect that the vanishing of the above integral over $C_\m^\ii$ should occur for the case of an arbitrary root system and choice of $\ii\in I$, whenever $(\ii,\m)\notin B(\la+\rho)$. This would yield an expression for the metaplectic Whittaker function as a sum over $B(\la+\rho)$ as opposed to the $B(-\infty)$ that our expression is currently a priori a sum over.
\end{remark}
\begin{proof}
We will first present the proof in the case where $m_\a>0$ for all $\a\in\Phi^+$, then discuss the changes that need to be made in order to incorporate the general case.

Recall that we have the variables $w_\a=\p^{-m_\a}u_\a$. In terms of such variables, we have
$$
\psi(u)=\prod_{(i,j)\in\Phi^+} \psi\left( \p^{s_{ij}}\frac{\prod_{k=j}^{r+1}u_{i,k}}{\prod_{k=j}^r u_{i+1,k+1}} \right)
$$
whereas
$$
f(u)= \prod_{\a\in\Phi^+}(q^{-\langle\rho,\a^\vee\rangle}x_\a)^{m_\a}(u_\a,\p)^{m_\a+\sum_{\a<_\ii \beta}\langle\beta,\a^\vee\rangle m_\beta}.
$$

We will show that the change of variables
$$
t_{i,j}=\frac{\prod_{k=j}^{r+1}u_{i,k}}{\prod_{k=j}^r u_{i+1,k+1}}
$$
transforms the integral $\int_{C_\m^\ii}f(u)\psi_\la(u)du$ into the product of integrals over $O_F^\times$ which are the defining integrals for the Gauss sums $g(r_\a,s_\a)$, that is

$$
\int_{C_\m^\ii}f(u)\psi_\la(u)du=\prod_{\a\in\Phi^+}(q^{-\langle\rho,\a^\vee\rangle}x_\a)^{m_\a}\int_{O_F^\times} (t_\a,\p)^{r_\a}\psi(\p^{s_\a}t_\a)dt_\a.
$$

Note that in this equation, the normalisation of Haar measure on $O_F^\times$ differs from the normalisation chosen in the introduction, which results in the appearance of the factor $\frac{q-1}{q^2}$ in the definition of $w(\m,\a)$.

To see this it suffices to check that
$$
\prod_{\a\in\Phi^+}u_\a^{m_\a+\sum_{\a<_\ii \beta}\langle\beta,\a^\vee\rangle m_\beta}=\prod_{(i,j)\in\Phi^+}t_{i,j}^{r_{i,j}}.
$$

A term $u_{i,j}$ on the product in the right hand side of the above product appears with a positive exponent in $t_{i,k}$ for $k\leq j$ and with a negative exponent in $t_{i-1,k}$ for $k\leq j-1$. When we pair up and cancel the occurrences from these terms as much as possible, we are left with $u_{ij}^{m_{ik}}$ from these terms. There also occurs $u_{ij}$ with exponent $\sum_{k\leq j}m_{k,j}$ from the $t_{i,j}$ term, and an occurrence of $u_{ij}$ with exponent $-\sum_{k\leq i-1} m_{k,i}$ from the $t_{i-1,i}$ term. Combining these gives exactly the formula we want, which is enough to prove the theorem in the case where all $m_\a$ are positive.

Now we shall consider the alterations to the above that need to be considered when some of the $m_\a$ are allowed to take the value zero.

In general, we perform the same manipulations to get a product of integrals of $(t_{i,j},\p)^{r_{i,j}}\psi(t_{i,j}\p^{s_{i,j}})$ except in two cases. The first of these is when $m_{i,j}=0$ in which case the term $(t_{i,j},\p)^{r_{i,j}}$ is no longer there, so we are left with simply an integral of an additive character over $O_F$, which we can easily evaluate to $w(\m,\a)$.

The more subtle case to handle is when we have $m_{i+1,j}=0$, for then this forces the `disappearance' of the additive character in the integrand. By Proposition \ref{gauss}, this is not of concern unless $s_{i,j}\leq-1$, for otherwise we have an explicit evaluation of the Gauss sums so we can see that we get our desired integral. Now note that the condition $m_{i+1,j}=0$ implies that $s_{i,j}=s_{i,j-1}+m_{i,j-1}$. Since $m_{i,j-1}\geq 0$, we have $s_{i,j-1}\leq -1$. These inequalities imply that the integral over $t_{i,j-1}$ is zero from our previous work, unless $m_{i+1,j-1}=0$. We can deal with this latter case because we may assume without loss of generality that $m_{i+1,j-1}\neq 0$ (if it exists) by decreasing $j$ if necessary.

Thus in all cases, we are lead to an easily evaluated integral, yielding the desired product formula for the integral over $C_\m^\ii$, so we are done.
\end{proof}

We have thus proven the following theorem.

\begin{theorem}\label{main}
The value of the integral $I_\la$ defined in (\ref{iladefn}) which calculates the metaplectic Whittaker function is given by
$$
I_\la=\sum_{(\ii,\m)\in B(\la+\rho)}\prod_{\a\in \Phi^+}w(\m,\a)x_\a^{m_\a}.
$$ where the weight $w(\m,\a)$ is as defined in (\ref{wdefn}).
\end{theorem}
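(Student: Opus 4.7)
The plan is extremely short: Theorem \ref{main} follows essentially immediately by combining the cell decomposition $U^- = \bigsqcup_{\m \in \N^N} C_\m^\ii$ with the cell-by-cell integral computation in the unnumbered theorem stated just above. Concretely, I would write
\[
I_\la = \int_{U^-} f(u)\psi_\la(u)\,du = \sum_{\m \in \N^N} \int_{C_\m^\ii} f(u)\psi_\la(u)\,du,
\]
which is justified because the $C_\m^\ii$ partition $U^-$ (after choosing a fixed reduced decomposition $\ii$, which we take to be the Gelfand--Tsetlin word). Interchanging the sum with the integral is not an issue here because the inner integrals are absolutely convergent under the usual convergence hypothesis $|x_\a|<1$, and the nonvanishing terms will be shown to form a geometric-type series in those variables.

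Next I would invoke the preceding theorem, which evaluates each cell integral explicitly: the integral over $C_\m^\ii$ vanishes unless $(\ii,\m) \in B(\la+\rho)$, and otherwise equals $\prod_{\a \in \Phi^+} w(\m,\a)\, x_\a^{m_\a}$. Substituting this into the sum above immediately collapses the index set from $\N^N$ to $B(\la+\rho)$ and yields
\[
I_\la = \sum_{(\ii,\m) \in B(\la+\rho)} \prod_{\a \in \Phi^+} w(\m,\a)\, x_\a^{m_\a},
\]
which is exactly the claimed formula.

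Since all substantive work has already been done in the preceding theorem (the change of variables $u_\a \to t_{i,j}$, the identification of each resulting one-variable integral as a Gauss sum $g(r_\a,s_\a)$, the matching of the vanishing condition for the cell integrals with the defining inequalities (\ref{crystalinequalities}) of $B(\la+\rho)$, and the careful handling of the degenerate cases where some $m_\a = 0$), there is no genuine obstacle to overcome in the proof of Theorem \ref{main} itself. The only point worth flagging is to note explicitly that the answer, as written, appears to depend on the choice of $\ii$ (namely the Gelfand--Tsetlin word) through the parametrisation of $B(\la+\rho)$ by $\N^N$; but since $I_\la$ is manifestly independent of $\ii$, this gives an a posteriori check of consistency with Theorem \ref{maincrystalthm} and the fact that the cells $\overline{C_\m^\ii}$ assemble into a $\ii$-independent collection indexed by $B(-\infty)$.
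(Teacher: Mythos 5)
Your proposal is correct and is essentially identical to the paper's own argument: the paper derives Theorem \ref{main} immediately from the preceding cell-integral theorem together with the decomposition $U^-=\bigsqcup_{\m}C_\m^\ii$, introducing it with the words ``We have thus proven the following theorem.'' Your additional remarks on convergence and on the a posteriori independence of the choice of $\ii$ are sensible but not part of the paper's (implicit) proof.
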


Noting that the sum over the crystal $B(\la+\rho)$ is identical to the corresponding sum over the crystal expressing the $p$-part of a Weyl group multiple Dirichlet series as presented in \cite{wgmdsxtal}, we obtain the following corollary

\begin{corollary}
The coefficients of the $p$-part of a Weyl group multiple Dirichlet series in type A is equal to the value of a Whittaker function evaluated at a torus element on the corresponding local metaplectic group.
\end{corollary}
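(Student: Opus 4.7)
The plan is to deduce the corollary from Theorem \ref{main} by matching the combinatorial sum appearing there with the sum over Gelfand--Tsetlin patterns recalled at the beginning of Section \ref{tok}. Concretely, I will exhibit a bijection between the Lusztig parametrization of $B(\lambda+\rho)$ for the Gelfand--Tsetlin long word $\ii$ and the set of Gelfand--Tsetlin patterns with top row determined by $\lambda+\rho$, and then verify that the weight function $w(\m,\alpha)$ of \eqref{wdefn} agrees with the weight $\gamma(a_{i,j})$ under this bijection.

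The first step is the indexing bijection. Given a tuple $\m=(m_{i,j})_{(i,j)\in\Phi^+}$ satisfying the crystal inequalities from \eqref{crystalinequalities}, define a triangular array $a_{i,j}$ whose top row encodes $\lambda+\rho$ by setting
\[
a_{i,j}=(\text{top-row value})-\sum_{k=j+1}^{r+1}m_{i,k}+\text{shift},
\]
with the shift chosen so that the interlacing $a_{i,j}\geq a_{i+1,j+1}\geq a_{i,j+1}$ translates exactly to $m_{i+1,j+1}\geq 0$ on one side and to the inequality \eqref{crystalinequalities} on the other. A straightforward induction on the position $(i,j)$ confirms that this produces a bijection between $\m\in B(\lambda+\rho)$ (in the sense of the preceding proposition) and Gelfand--Tsetlin patterns with the prescribed top row.

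The second step is matching the local weights. Under the bijection of Step 1, the exponent
\(
r_{i,j}=\sum_{k\leq i}m_{k,j}
\)
in the definition of $w(\m,\alpha)$ should coincide with the integer $e_{i,j}=\sum_{k=j}^{r}(a_{i,k}-a_{i-1,k})$ from the Gelfand--Tsetlin weight, and the integer $s_{i,j}$ should shift the Gauss sum to match the boxed/unboxed cases. One then checks that: the circled condition $m_\alpha=0$ corresponds to $a_{i,j}=a_{i-1,j}$; the boxed condition (equality in \eqref{crystalinequalities}) corresponds to $a_{i,j}=a_{i-1,j-1}$; the three-case definition of $w(\m,\alpha)$ matches the four-case definition of $\gamma(a_{i,j})$ (with the doubly decorated case producing zero on both sides). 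Finally, the power $\prod_{\a\in\Phi^+}x_\a^{m_\a}$, with $x_\a$ as in Lemma \ref{lemma4}, is identified with $\prod_i q^{-2k_i s_i}$ via the standard dictionary between Langlands parameters of the principal series and the Dirichlet variables $s_i$, using that $k_i=\sum_j (a_{i,j}-a_{0,j})$ equals the total coefficient of $\alpha_i^\vee$ appearing in $\sum_{\a\in\Phi^+}m_\a\a^\vee$ under the bijection of Step 1.

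The main obstacle is Step 1: verifying that the bijection between Lusztig coordinates for the Gelfand--Tsetlin word and Gelfand--Tsetlin patterns is exactly the right one so that both the interlacing inequalities and the circled/boxed decorations translate cleanly. Once the bijection is fixed, Step 2 is a direct comparison of the explicit formulas, and the conclusion follows from Theorem \ref{main} together with the description of the $p$-part from \cite{wgmdsxtal} recalled at the start of this section.
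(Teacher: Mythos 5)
Your proposal is correct in substance, but it takes a longer route than the paper does. The paper's entire argument is the observation that the weighted sum over $B(\la+\rho)$ in Theorem \ref{main} coincides \emph{term by term} with the crystal-graph (path) description of the $p$-part already established in \cite{wgmdsxtal}: that reference proves the equivalence of the Gelfand--Tsetlin formula with a sum over decorated Lusztig/path data for exactly the long word $w_0=s_1(s_2s_1)\cdots(s_r\cdots s_1)$, so no further bijection needs to be constructed. You instead compare directly with the Gelfand--Tsetlin form, which forces you to rebuild the dictionary between Lusztig coordinates $\m$ and patterns $(a_{i,j})$, to re-match the circled/boxed decorations, and to reconcile $r_{i,j}$ with $e_{i,j}$ and $w(\m,\a)$ with $\gamma(a_{i,j})$. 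That is all doable --- it is essentially the content of the combinatorial lemmas in \cite{wgmdsxtal} --- but it is exactly the step you flag as ``the main obstacle,'' and it is where silent normalization errors live: the three-case definition \eqref{wdefn} versus the four-case $\gamma$, the factor $\frac{q-1}{q^2}$ coming from the choice of Haar measure, and the powers $q^{-\langle\rho,\a^\vee\rangle}$ absorbed into $x_\a^{m_\a}$ must all be tracked to see, e.g., that the ``circled but not boxed'' weight $1$ really corresponds to $q^{e_{i,j}}$ after the change of variables to $q^{-2\sum_i k_is_i}$. If you cite the crystal form of the $p$-part from \cite{wgmdsxtal} as the paper does, the corollary is immediate from Theorem \ref{main}; if you insist on the Gelfand--Tsetlin form, you should either carry out the verification in Steps 1--2 explicitly or cite the relevant equivalence in \cite{wgmdsxtal} rather than leaving the bijection as ``a straightforward induction.''
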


%\bibliographystyle{alpha}
%\bibliography{bibfile}

\end{document}